\documentclass{amsart}
\usepackage{amsthm, bbold, amssymb, enumerate}
\DeclareMathOperator\Gal{Gal}
\DeclareMathOperator\Lie{Lie}

\DeclareMathOperator{\ab}{ab}
\usepackage[usenames, dvipsnames]{color}
\definecolor{mygray}{gray}{0.6}

\newcommand{\isom}{\cong}

\def\1{\mathbb{1}}

\def\cC{\mathcal{C}}

\def\calH{\mathcal{H}}

\def\cO{\mathcal{O}}

\def\cZ{\mathcal{Z}}

\def\comod{\backslash}

\def\A{\mathbb{A}}
\def\F{\mathbb{F}}
\def\Z{\mathbb{Z}}

\def\Q{\mathbb{Q}}

\def\C{\mathbb{C}}

\def\T{\mathbf{T}}

\def\U{\mathbf{U}}
\def\G{\mathbf{G}}
\def\H{\mathbf{H}}

\def\Res{\text{Res}}

\def\comod{\backslash}

\newtheorem{lemma}{Lemma}

\newtheorem{theorem}{Theorem}
\newtheorem{corollary}{Corollary}

\newtheorem{remark}{Remark}

\usepackage[centertags]{amsmath}
\usepackage{amsfonts}
\usepackage{amssymb}
\usepackage{amsthm}
\usepackage{amscd}
\usepackage{hyperref}
\usepackage{xypic}

\DeclareFontFamily{OT1}{rsfs}{}
\DeclareFontShape{OT1}{rsfs}{n}{it}{<-> rsfs10}{}
\DeclareMathAlphabet{\mathscr}{OT1}{rsfs}{n}{it}

\newcommand{\cA}{\mathcal{A}}
\newcommand{\cF}{\mathcal{F}}

\newcommand{\ds}{\displaystyle}

\newcommand{\cB}{\mathcal{B}}

\newcommand{\ra}{\rightarrow}
\newcommand{\xra}[1]{\xrightarrow{#1}}
\newcommand{\hra}{\hookrightarrow}

\newcommand{\mthree}[9]{\left [
        \begin{matrix}#1&#2&#3\\#4&#5&#6\\#7&#8&#9
        \end{matrix}\right ]}
\newcommand{\mtwo}[4]{\left [
        \begin{matrix}#1&#2\\#3&#4
        \end{matrix}\right ]}

\newcommand{\comment}[1]{}

\newcommand{\cS}{\mathcal{S}}

\newcommand{\bc}{\mathbf{c}}

\newcommand{\Qbar}{\overline{\Q}}

\newcommand{\Af}{\mathbf{A}_f}

\DeclareFontEncoding{OT2}{}{}

\renewcommand{\H}{\mathbf{H}}

\newcommand{\wbar}{\overline{w}}

\setcounter{secnumdepth}{5}

\DeclareMathOperator{\Ad}{Ad}
\DeclareMathOperator{\Supp}{Supp}

\DeclareMathOperator{\dist}{dist}

\DeclareMathOperator{\Tr}{Tr}

\DeclareMathOperator{\Frob}{Frob}
\DeclareMathOperator{\Fr}{Fr}
\DeclareMathOperator{\Ver}{Ver}

\DeclareMathOperator{\ord}{ord}
\DeclareMathOperator{\GL}{\mathbf{GL}}

\DeclareMathOperator{\PGL}{\mathbf{PGL}}

\DeclareMathOperator{\Hbf}{\mathbf{H}}

\DeclareMathOperator{\Tbf}{\mathbf{T}}

\DeclareMathOperator{\Nbf}{\mathbf{N}}

\DeclareMathOperator{\diag}{diag}
\DeclareMathOperator{\Hom}{Hom}

\DeclareMathOperator{\Sh}{Sh}

\DeclareMathOperator{\inv}{\mathbf{inv}}

\DeclareMathOperator{\ur}{ur}

\DeclareMathOperator{\Stab}{Stab}

\DeclareMathOperator{\id}{id}

\DeclareMathOperator{\ad}{ad}
\DeclareMathOperator{\Art}{Art}

\DeclareMathOperator{\Hyp}{\mathbf{Hyp}}
\DeclareMathOperator{\Inv}{\mathbf{Inv}}

\theoremstyle{plain} 
\newtheorem{thm}{Theorem}[section] 
\newtheorem{thmx}{Theorem}

\newtheorem{prop}[thm]{Proposition}

\newtheorem{lem}[thm]{Lemma}

\theoremstyle{definition} 

\theoremstyle{remark}

\newcommand{\leftexp}[2]{{\vphantom{#2}}^{#1}{#2}}

\newcounter{tasknumber}
\setcounter{tasknumber}{0}
\newcommand{\task}[2][]{%
  \addtocounter{tasknumber}{1}%
  \begin{center}%
  \framebox[1.1\width]{\begin{minipage}{0.9\textwidth}%
  \textbf{Task \arabic{tasknumber}} \textit{\if!#1(unassigned)!\else (#1)\fi}: {#2}%
  \end{minipage}}%
  \end{center}%
}

\newcounter{assumptionnumber}
\setcounter{assumptionnumber}{0}
\newcommand{\assumption}[2][]{%
  \addtocounter{assumptionnumber}{1}%
  \begin{center}%
  \framebox[1.1\width]{\begin{minipage}{0.9\textwidth}%
  \textbf{Assumption \arabic{assumptionnumber}} \textit{\if!#1!\else (#1)\fi}: {#2}%
  \end{minipage}}%
  \end{center}%
}

\usepackage{xypic}
\usepackage{pictexwd,dcpic}
\usepackage{algorithm}
\usepackage{algorithmic}
\usepackage[dvipsnames]{xcolor}
\usepackage{mathrsfs} 

\title[]{Horizontal Distribution Relations For Special Cycles on Unitary Shimura Varieties: Split Case}
 
\author{Reda M. Boumasmoud}
\email{reda.boumasmoud@epfl.ch}
\address{Ecole Polytechnique F\'ed\'erale de Lausanne, Switzerland} 
 
\author[E. Hunter Brooks]{Ernest Hunter Brooks}
\email{ernest.brooks@epfl.ch}
\address{Ecole Polytechnique F\'ed\'erale de Lausanne, Switzerland} 
 
\author{Dimitar P. Jetchev}
\email{dimitar.jetchev@epfl.ch}
\address{Ecole Polytechnique F\'ed\'erale de Lausanne, Switzerland}

\begin{document}

\begin{abstract} 
We study the local behavior of special cycles on Shimura varieties for $\U(2, 1) \times \U(1, 1)$ in the setting of the Gan-Gross-Prasad conjectures at primes $\tau$ of the totally real field of definition of the unitary spaces which are \emph{split} in the corresponding totally imaginary quadratic extension. We establish a local formula for their fields of definition, and prove a distribution relation between the Galois and Hecke actions on them. This complements work of \cite{jetchev:unitary} at inert primes, where the combinatorics of the formulas are reduced to calculations on the Bruhat--Tits trees, which in the split case must be replaced with higher-dimensional buildings. \end{abstract}

\maketitle

\section{Introduction}

\subsection{Special cycles on Shimura varieties}

The conjectures of Gan, Gross and Prasad \cite{gan-gross-prasad} involve cycles on Shimura varieties constructed from embeddings of reductive groups. This process may be thought of as a generalization of the construction of Heegner points (which arise from the embedding of a non-split rank one torus over $\Q$ into $\GL_2$). In the particular case of embeddings of unitary groups arising from embeddings of Hermitian spaces, these cycles have been extensively studied. For example, Howard has studied their intersection theory in \cite{howard:kr}, and first steps toward a Gross--Zagier formula for them appear in Zhang \cite{zhang:afl} and Rapoport, Terstiege and Zhang \cite{rapoport-terstiege-zhang}. The versions of these cycles defined in \cite{jetchev:unitary}, whose specific construction will be recalled shortly, is designed for constructing Euler systems. The latter control the behavior of $L$-functions for automorphic forms on unitary groups at the central point and allow to prove non-trivial results towards the Birch and Swinnerton-Dyer conjecture and its generalizations, the Bloch--Kato--Beilinson conjectures.

In \cite{jetchev:unitary}, an extensive local theory of these cycles is developed at a place $\tau$ of the totally real field $F$ of definition of the unitary groups, under the assumption that $\tau$ is inert in the totally imaginary quadratic extension $E$ of $F$ that splits the groups. This has many applications to Euler systems already; however, one would like to extend these results to the setting where $\tau$ is split, for two reasons. First, there are many interesting questions about the global arithmetic of these cycles that cannot be deduced from the calculations at inert primes alone; for example, one would like a ``Heegner hypothesis'' describing when one can find a cycle defined over the Hilbert class field of $E$. Second, even if one is only interested in local methods, preliminary studies of $p$-adic $L$-functions in the unitary setting indicate that both the inert and split cases will be necessary (see e.g. \cite{eischen-harris-li-skinner} or the work in progress by Skinner and Prasanna).

This paper develops the arithmetic theory for the special cycles at split places, where the groups identify with general linear groups. One new technical input is a generalization of the methods of \cite{jetchev:unitary}, which work only for 1-dimensional trees, to higher-dimensional Bruhat--Tits buildings. In addition, the global arithmetic of the cycles becomes substantially more complicated due to a non-trivial action of the Frobenius elements at primes above $\tau$ in $\Gal(E[1]/E)$ on the cycles, where $E[1]$ denotes the Hilbert class field of $E$. Our main results are Theorems A, B, and C below. 

Before stating these results, we fix our notation. As already described, let $F$ be a totally real field and $E/F$ be a totally imaginary quadratic extension, together with a fixed embedding into $\mathbb{C}$. Let $\tau$ be a finite place of $F$, \emph{split} in $E$, and fix an embedding $\iota_\tau \colon \overline{F} \hra \overline{F}_\tau$. We will write $w$ for the place of $E$ determined by this choice, and $\overline{w}$ for its conjugate. Abusing notation, we will also write $w$ for the prime above $w$ determined by this choice in any field extension of $E$ contained in the fixed algebraic closure. Pick a uniformizer $\varpi$ for $F_\tau = E_w$, write $q$ for the cardinality of the residue field, and write $p$ for the rational prime below $\tau$.  

Let $W \subset V$ be an embedding of $E$-hermitian spaces with signatures $(1,1)$ (resp. $(2,1)$) at the distinguished archimedean place and $(2, 0)$ (resp. $(3,0)$) at the other archimedean places. Write $D$ for the orthogonal complement of $W$ in $V$. We may assume without loss of generality (see \cite[\textsection 1.2.1]{jetchev:unitary}) that $D$ is anisotropic and contains a vector $e_D \in D$ for which $\langle e_D, e_D\rangle = 1$.

There are algebraic groups $\G = \Res_{F/\Q}(\U(V) \times \U(W))$ and $\H = \Res_{F/\Q} \U(W)$, and an embedding $\H \hookrightarrow \G$, described in Section \ref{sec:prelim}. In this same section, a particular compact subgroup $K \subset \G(\A_f)$ and Hermitian symmetric domain $X$ are chosen, which give rise to a 
3-dimensional Shimura variety $\Sh_K(\G, X)$ and a family $\cZ_K(\G, \H)$ of special $1$-cycles on this threefold. The cycles in $\cZ_K(\G, \H)$ are defined over abelian extensions of $E$. There is a surjective map $\cZ_K \colon \G(\A_f) \to \cZ_K(\G, \H)$ inducing a bijection 
$$
\Nbf_{\G}(\H)(\Q) \backslash \G(\Af) / K \simeq \cZ_K(\G, \H), 
$$
where $\Nbf_\G(\H)$ denotes the normalizer of $\H$ in $\G$ (see \cite[Lem.2.3]{jetchev:unitary}). 
\subsection{Local conductor formula}
For $g \in \G(\A_f)$, one would like to compute the field of definition of a special cycle $\mathcal{Z}_K(g)$. This problem may be broken into a global and a local component. Globally, one wishes to understand the question of the field of definition of a particular cycle (e.g., $\mathcal{Z}_K(1)$), and locally, one seeks a formula computing the field of definition of $\mathcal{Z}_K(gg_\tau)$ in terms of the field of definition of the cycle $\mathcal{Z}_K(g)$, where $g_\tau \in \U(V)(F_\tau) \times \U(W)(F_\tau)$. 

Set $G_\tau = \U(V)(F_\tau) \times \U(W)(F_\tau)$ and $H_\tau = \U(W)(F_\tau)$; we identify $G_\tau$ and $H_\tau$ with general linear groups as normalized in Section~\ref{normalizationgln}. The particular choice of $K$ determines a compact open subgroup $K_\tau \subset G_\tau$, and the action of the decomposition group at $\tau$ on $\mathcal{Z}_K(\G, \H)$ may be described in terms of the action of $H_\tau$ on $G_\tau /K_\tau$ (see Section~\ref{subsec:shimrec}). 

To state the conductor formula precisely, one thus needs to understand the orbits of $H_\tau$ on $G_\tau /K_\tau$. In Theorem \ref{thm:orbits}A, we show that the orbits are in bijection with a set of $5$-tuples of integers $(s, r, d, m, n)$, modulo a certain equivalence relation. The combinatorics of these invariants is somewhat involved, but they have a natural interpretation in terms of the Bruhat--Tits building for $\GL_3(F_\tau) \times \GL_2(F_\tau)$ (see Section \ref{algorithm} for an algorithm to compute the invariants).

To determine the completions of the field of definition of the cycle $\mathcal{Z}_K(g)$ at $\tau$, one must work at the places $w$ and $\overline{w}$ simultaneously. Write $E_\tau = E \otimes_F F_\tau = E_w \times E_{\wbar}$. If $L$ is the field of definition $\mathcal{Z}_K(g)$, then the \'etale algebra $L \otimes F_\tau$ is determined by its corresponding norm subgroup in $E_\tau^\times = E_w^\times \times E_{\wbar}^\times$. It can be shown (Lemma~\ref{lem:norm}) that this subgroup is always of the form $F_\tau^\times \cdot (\mathcal{O}_{F_\tau} + \varpi^{\bc_\tau(g)} \mathcal{O}_{E_\tau})^\times$ for a unique non-negative integer $\bc_\tau(g)$ called the \emph{local conductor} at $\tau$.

\begin{thmx}\label{theoremA}
Let $g = (g_V, g_W) \in G_\tau$ have invariants $(s, r, d, m, n)$. The local conductor at $\tau$ of $\mathcal{Z}_K(g)$ is then given by
$$
\bc_\tau(g) = \max\{\min\{ m-n, d-m+n \},0\}.
$$
\end{thmx}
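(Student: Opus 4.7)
The strategy is to translate the field-of-definition question into a stabilizer computation on $G_\tau / K_\tau$, then read off the conductor from the orbit invariants. By the Shimura reciprocity recalled in Section~\ref{subsec:shimrec}, the completion at $\tau$ of the field of definition of $\mathcal{Z}_K(g)$ corresponds under local class field theory to the subgroup
$$
\Stab_{E_\tau^\times}(H_\tau g K_\tau) := \{ t \in E_\tau^\times : t \cdot g \in H_\tau g K_\tau \},
$$
where $E_\tau^\times$ is embedded into a CM subtorus of $\Nbf_\G(\H)(F_\tau)$ canonically attached to the hermitian decomposition $V = W \oplus D$. Combined with Lemma~\ref{lem:norm}, which guarantees that this stabilizer has the form $F_\tau^\times \cdot (\mathcal{O}_{F_\tau} + \varpi^c \mathcal{O}_{E_\tau})^\times$ for a unique $c \geq 0$, the task becomes: find the explicit value of $c$ in terms of the invariants $(s,r,d,m,n)$.

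The next step is to unpack this stabilizer under the identifications $G_\tau = \GL_3(F_\tau) \times \GL_2(F_\tau)$ and $E_\tau^\times = F_\tau^\times \times F_\tau^\times$ (via $w$, $\wbar$). Using the decomposition $V_\tau = W_\tau \oplus D_\tau$ and the choice of $e_D$, the CM torus acts on the $\GL_3$-factor by matrices of shape $\diag(x, y, \alpha(x,y))$, where $\alpha$ is the character determined by the action on the one-dimensional space $D_\tau$, and on the $\GL_2$-factor by $\diag(x, y)$. Picking an orbit representative $g = (g_V, g_W)$ attached to $(s,r,d,m,n)$ through the algorithm of Section~\ref{algorithm}, I would compute the invariants of $t \cdot g$ and read off the conditions on $t = (x,y)$ ensuring they agree with those of $g$. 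My expectation is that this yields two independent congruences: one from the $g_V$-component, of the form $x/y \equiv 1 \pmod{\varpi^{m - n}}$, and one from the $g_W$-component, of the form $x/y \equiv 1 \pmod{\varpi^{d - m + n}}$. The $W$-side congruence should follow essentially from the rank-one computation of \cite{jetchev:unitary}, while the $V$-side requires a genuinely new argument taking place in a two-dimensional apartment of the $\GL_3$-building.

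Combining the two congruences yields the simultaneous condition $x/y \equiv 1 \pmod{\varpi^{\min(m-n,\, d-m+n)}}$, which — together with the scalar factor $F_\tau^\times$ and the convention that a non-positive exponent imposes no condition — translates exactly into the desired formula $\bc_\tau(g) = \max\{\min\{m-n, d-m+n\}, 0\}$. I expect the main obstacle to be the first half of the second paragraph, namely choosing orbit representatives in the $\GL_3$-building that are compatible with the CM torus action and from which the invariants of $t g$ can be computed transparently. The combinatorics is significantly more involved than in the inert case of \cite{jetchev:unitary}, since one is working in a two-dimensional apartment rather than on a tree. The symmetry $m-n \leftrightarrow d-(m-n)$ in the final formula is then a manifestation of the $w \leftrightarrow \wbar$ swap in the split case, and verifying that this symmetry is realized at the level of the chosen representatives is the principal bookkeeping task.
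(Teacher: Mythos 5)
There is a genuine gap, and it sits at the very first step. You reduce the problem to computing $\Stab_{E_\tau^\times}(H_\tau g K_\tau)=\{t\in E_\tau^\times : t\cdot g\in H_\tau gK_\tau\}$ for a CM torus inside $\Nbf_{\G}(\Hbf)(F_\tau)$, but this is not how the reciprocity law \eqref{eq:gal} acts here, and the condition you write is essentially vacuous. Your torus $t\mapsto(\diag(x,y,\alpha(x,y)),\diag(x,y))$ factors as an element of $H_\tau$ (the image of $\diag(x,y)$ under the diagonal embedding) times $(\diag(1,1,\alpha),1)$; the $H_\tau$-part never moves the coset out of $H_\tau gK_\tau$, and for unit $\alpha$ the second factor only rescales the vector $v_{m,n}$ of Proposition~\ref{prop:RdOrbits}, a change absorbed by scalar matrices in $R_d^\times$. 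So the invariants of $t\cdot g$ agree with those of $g$ for \emph{all} such $t$, and your proposed computation returns no condition at all rather than the conductor. In this paper the Galois action is not left multiplication by a torus: $\sigma$ acts through elements $h_\sigma\in\Nbf_{\G}(\Hbf)(\Q)\Hbf(\A_f)$ constrained only by $\Art_E^1(\det^*(h_\sigma))=\sigma|_{E[\infty]}$, so the local norm subgroup is governed by which determinants are attained by elements of $H_\tau$ fixing the \emph{point} $gK_\tau$ (the pair of lattices), i.e.\ by $\det\bigl(\Stab_{H_\tau}(x,y)\bigr)$, pulled back through $r$ via Lemma~\ref{lem:norm} and the filtration $H_c=\det^{-1}(\cO_c^1)$. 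That is the reduction the paper actually performs: via Theorem~\ref{thm:orbits}A it becomes the computation of $\det$ of the stabilizer of $v_{m,n}$ inside $R_d^\times$, carried out in Lemma~\ref{lem:intcase}.

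The second symptom that the mechanism is off is your final "combination" step. If the answer really were cut out by two simultaneous congruences $x/y\equiv 1 \pmod{\varpi^{m-n}}$ and $x/y\equiv 1\pmod{\varpi^{d-m+n}}$, their intersection would give conductor $\max\{m-n,d-m+n\}$, not $\min$. The $\min$ in Theorem~\ref{theoremA} arises for the opposite structural reason: the stabilization of $v_{m,n}$ by $M=\begin{pmatrix}\alpha&\beta\\ \varpi^d\gamma&\delta\end{pmatrix}\in R_d^\times$ forces $\det M\equiv 1\pmod{\varpi^{\min(m-n,\,d-m+n)}}$, and conversely the determinant image is \emph{generated} by elements realizing each modulus separately (the two explicit matrices in Lemma~\ref{lem:intcase}), so it equals $1+\varpi^{\min(m-n,d-m+n)}\cO$. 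In other words the two "congruences" live on the stabilizer, one per matrix entry relation, and the conductor records the weaker of them, not the stronger; any correct argument has to produce that union-type statement, which your setup cannot.
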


The reader may observe that the invariants labeled $s$ and $r$ do not affect the field of definition of a cycle. These invariants arise from the centers of $H_\tau$ and $G_\tau$, suggesting that the arithmetic of the cycles is ultimately governed by the adjoint forms of $\G$ and $\H$. However, the embedding $W \hookrightarrow V$ of Hermitian spaces does not induce a map of algebraic groups modulo the center, so one is forced to keep track of the $r$ and $s$ invariants when an embedding of groups is necessary (as will occur in Section \ref{sec:split-distrel-inv}).

\subsubsection{Applications to global cycles}
Theorem \ref{theoremA} allows one to state the field of definition of global cycles in some special cases (the analogous problem for torus embeddings in $\GL_2$ is the field of definition of Heegner points on the modular curve of level $1$).  For an ideal $\mathfrak c \subset \cO_F$, consider the following abelian extensions:

\begin{enumerate}

\item $E[\mathfrak c]$ - the ring class field of conductor $\mathfrak c$ determined (via global class field theory) by the norm subgroup $E^\times \cdot \widehat{\cO_{\mathfrak c}}^\times \subset \widehat{E}^\times$, 

\item $E(\mathfrak c)$ - the subfield of $E[\mathfrak c]$ whose norm subgroup is 
$$
E^\times \cdot \widehat{F}^\times \cdot \widehat{\cO_{\mathfrak c}}^\times \subset \widehat{E}^\times
$$
\end{enumerate}

\noindent Here, $\cO_{\mathfrak c}$ stands for the order $\cO_F + \mathfrak c \cO_E \subset \cO_E$ of conductor $\mathfrak c$. Note that $E[\mathfrak c]$ and $E(\mathfrak c)$ are in general not the same. The explicit form of the local conductor formula shows that the global cycles are defined over completions of $E(\mathfrak c)$.

\begin{corollary}\label{cor:globalcycle} Suppose that $E/F$ is unramified outside infinity. Suppose further that the compact $K = K_V \times K_W$, where $K_V$ and $K_W$ are stabilizers of global self-dual lattices in $V$, resp. $W$, and that all primes above $2$ in $F$ are split in $E$. Then the cycle $\mathcal{Z}_K(1)$ is defined over $E(1)$. In this setting, for any ideal $\mathfrak{c}$ of $F$, there exists a cycle whose field of definition is $E(\mathfrak{c})$.
\end{corollary}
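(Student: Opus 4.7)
The plan is to reduce the problem to a purely local computation at each finite place of $F$, combining Theorem \ref{theoremA} at split primes with the analogous inert computations of \cite{jetchev:unitary} elsewhere. By Lemma \ref{lem:norm} (and its inert counterpart), the completion at $\tau$ of the field of definition of $\cZ_K(g)$ corresponds via class field theory to the local norm subgroup
\[
F_\tau^\times \cdot (\O_{F_\tau} + \varpi^{\bc_\tau(g)} \O_{E_\tau})^\times \subset E_\tau^\times.
\]
The assumption that $E/F$ is unramified outside infinity ensures every finite prime of $F$ is split or inert in $E$, and the assumption on primes above $2$ guarantees that the inert formulas of \cite{jetchev:unitary} apply at every inert prime without exceptional cases. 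Crucially, each local norm subgroup contains $F_\tau^\times$, so the global norm subgroup will contain $\widehat{F}^\times$; this forces the field of definition to lie inside some $E(\mathfrak{c})$ rather than only in $E[\mathfrak{c}]$.

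First I would handle $\cZ_K(1)$. Since $K_\tau$ is the stabilizer of a self-dual lattice at every finite $\tau$, the identity element has trivial orbit invariants, and Theorem \ref{theoremA} yields $\bc_\tau(1) = \max\{\min\{0,0\},0\} = 0$ at each split $\tau$; the inert analog of \cite{jetchev:unitary} gives $\bc_\tau(1) = 0$ at inert places as well. Thus the global norm subgroup of the field of definition is $\widehat{F}^\times \cdot \widehat{\O_E}^\times$, which cuts out exactly $E(1)$. For general $\mathfrak{c}$, I would construct $g^{\mathfrak{c}} \in \G(\Af)$ with $g_\tau^{\mathfrak{c}} = 1$ away from $\mathfrak{c}$ and with $\bc_\tau(g_\tau^{\mathfrak{c}}) = v_\tau(\mathfrak{c})$ at each $\tau \mid \mathfrak{c}$. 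At a split such $\tau$, Theorem \ref{theoremA} together with the surjectivity of the orbit-invariant map \ref{thm:orbits} allows one to pick $g_\tau^{\mathfrak{c}}$ with, for example, invariants $(0,0,2v_\tau(\mathfrak{c}),v_\tau(\mathfrak{c}),0)$, which give $\min\{m-n,\,d-m+n\} = v_\tau(\mathfrak{c})$; at inert $\tau$ one makes the analogous choice available in \cite{jetchev:unitary}. Assembling the $g_\tau^{\mathfrak{c}}$ into a single adelic element yields a cycle $\cZ_K(g^{\mathfrak{c}})$ whose global norm subgroup is $\widehat{F}^\times \cdot \widehat{\O_\mathfrak{c}}^\times$, hence defined over $E(\mathfrak{c})$.

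The hard part will be the uniform bookkeeping between split and inert places: one must verify that the inert analog of Lemma \ref{lem:norm} produces a local norm subgroup of the same shape $F_\tau^\times (\O_{F_\tau}+\varpi^{\bc_\tau}\O_{E_\tau})^\times$, and that the invariants of \cite{jetchev:unitary} can be matched to the split invariants coherently enough to assemble a single adelic element realizing the prescribed conductor at every place. The hypothesis that primes above $2$ are split is precisely what is needed to avoid degeneracies in the inert local conductor formula at residue characteristic $2$, where the descriptions of self-dual lattices and the resulting orbit invariants from \cite{jetchev:unitary} fail to behave uniformly.
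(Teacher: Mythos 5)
Your proposal is correct and follows essentially the same route as the paper: reduce to local conductors via Lemma~\ref{lem:norm}, note that the unramified hypothesis makes every finite place split or inert (with the split-at-$2$ condition covering the $p\neq 2$ restriction of \cite{jetchev:unitary}), observe that trivial invariants give conductor $0$ everywhere for $\mathcal{Z}_K(1)$, and for general $\mathfrak{c}$ assemble an adelic element from local representatives of prescribed conductor, exactly as in the paper's proof. Your explicit choice of split invariants $(0,0,2v_\tau(\mathfrak{c}),v_\tau(\mathfrak{c}),0)$ is a valid instance of the paper's ``any canonical matrix as in Theorem~\ref{thm:orbits}B whose local conductor is $c$.''
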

\begin{proof}
The choice of compact is such that Theorem \ref{theoremA} applies to every place of $F$ that is split in $E$, and the local conductor formula of \cite{jetchev:unitary} applies to every place $\tau$ that is inert (the restriction at $2$ comes from the assumption $p\neq 2$ in \emph{loc. cit.}, which is not needed in this paper). It follows from the definition of invariants that the local invariants at split places are $(0, 0, 0, 0, 0)$, and the local invariants at inert places (in the sense of \cite[Thm.1.2.i]{jetchev:unitary}) are $(0, 0)$, so that all local conductors are $0$ and thus $\mathcal{Z}_K(1)$ is defined over $E(1)$.

For $\tau$ an arbitrary place of $F$, and $c$ any non-negative integer, there are explicit elements $g_c \in G_\tau$ such that the local conductor at $v$ satisfies
$$
\bc_\tau(g_c) = c.
$$
At inert places, one takes $g_c = (g_V, g_W)$ with conductor $c$ (see \cite[Thm 1.2.i]{jetchev:unitary}) and at split places, one may take $g_c$ to be any canonical matrix as in Theorem \ref{thm:orbits}B whose local conductor is $c$. The result on arbitrary conductors follows by taking products. 
\end{proof}

As an additional corollary, in the setting of Corollary \ref{cor:globalcycle}, one attains the first non-conjectural example of a family of trace compatible cycles defined over the anticyclotomic extension of $E$ attached to an \emph{inert} place $w$.

\begin{corollary}\label{cor:vertical} Under the assumptions of Corollary \ref{cor:globalcycle}, for $w$ inert, for any choice of automorphic representation $\pi$ of $\G$, one has a family of cycles $y_n \in \Z[\cZ_K(\G, \H)]$, such that the field of definition of $y_n$ is $E(\tau^n)$, satisfying 
$$\Tr_{E(\tau^n)/E(\tau^{n-1})} y_n = y_{n-1}.$$
\end{corollary}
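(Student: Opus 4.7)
The plan is to combine Corollary \ref{cor:globalcycle} of this paper with the inert-place vertical distribution relation established in \cite{jetchev:unitary}, and then to renormalize by the Hecke eigenvalues of $\pi$ to obtain genuine trace compatibility.

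First I would choose, for each $n \geq 0$, an element $g_n \in \G(\Af)$ whose component at the inert place $\tau$ is a canonical representative of inert local conductor $n$ in the sense of \cite[Thm.1.2.i]{jetchev:unitary}, and whose component at every other finite place is trivial. Setting $z_n := \mathcal{Z}_K(g_n)$, the hypotheses of Corollary \ref{cor:globalcycle} together with Theorem \ref{theoremA} force the local conductor of $z_n$ to vanish at every split place of $F$, while the inert local conductor formula of \cite{jetchev:unitary} gives local conductor exactly $n$ at $\tau$. Combining these through global class field theory exactly as in the proof of Corollary \ref{cor:globalcycle} shows that $z_n$ is defined over $E(\tau^n)$.

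Next I would invoke the vertical distribution relation of \cite{jetchev:unitary} at the inert place $\tau$, which yields an identity of the form
$$\Tr_{E(\tau^n)/E(\tau^{n-1})}(z_n) \;=\; \mathcal{T}\cdot z_{n-1}$$
for $n$ sufficiently large, where $\mathcal{T}$ is an explicit element of the spherical Hecke algebra at $w$ arising from the combinatorics of the Bruhat--Tits tree for the inert unitary group; at the lowest levels one must additionally account for correction terms involving $\Frob_w$. Projecting to the $\pi$-isotypic quotient of $\Q[\cZ_K(\G,\H)]$, the operator $\mathcal{T}$ becomes multiplication by a scalar $t_\pi$ determined by the Satake parameters of $\pi$ at $w$. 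Provided $t_\pi \neq 0$, I would define $y_n := t_\pi^{-n}\,z_n$ in this quotient; trace compatibility then follows immediately from the displayed identity.

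The hard part is the vertical distribution relation itself, which is the content of \cite{jetchev:unitary}; the role of the present paper is precisely to supply the correct global field of definition $E(\tau^n)$ for $z_n$ via the split-place conductor control of Theorem \ref{theoremA}, without which one would only know that $z_n$ is defined over some extension of $E(\tau^n)$. One subtlety worth noting is that, strictly speaking, the $y_n$ live in a $\pi$-isotypic quotient of $\Q_p[\cZ_K(\G,\H)]$ rather than in $\Z[\cZ_K(\G,\H)]$ itself, so the coefficient ring in the statement of the corollary should be interpreted as allowing the Hecke-algebra renormalization at $w$ described above.
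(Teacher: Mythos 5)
The paper's own proof is a one-line citation: the corollary is a restatement of the main result of \cite[\textsection 5]{brooks-boumasmoud-jetchev}, with the auxiliary field $L$ there replaced by $E(1)$ thanks to Corollary~\ref{cor:globalcycle}. You instead try to reconstruct the result from scratch. The first half of your argument (choosing $g_n$ with inert local conductor $n$ at $\tau$ and trivial local invariants elsewhere, then combining Theorem~\ref{theoremA} with the inert conductor formula to see that $z_n$ is defined over $E(\tau^n)$) is exactly what Corollary~\ref{cor:globalcycle} provides, and that part is fine.

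The second half has gaps. To begin with, the vertical distribution relation at inert places that you invoke is established in \cite{brooks-boumasmoud-jetchev}, not in \cite{jetchev:unitary}: the latter contains the \emph{horizontal} relation and the local conductor formula at inert places, not a vertical one. More importantly, your trace-compatibility argument presupposes a two-term recurrence $\Tr_{E(\tau^n)/E(\tau^{n-1})}(z_n)=\mathcal{T}\cdot z_{n-1}$. Nothing in the sources, nor any analogy, supports this; the Heegner-point prototype is the three-term Kolyvagin norm relation $\Tr_{K[p^{n+1}]/K[p^n]}y_{n+1}=T_p\,y_n - y_{n-1}$, and here the relevant Hecke polynomial has degree six, so one should expect the recurrence to be at least as complicated. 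You wave at ``correction terms involving $\Fr_w$ at the lowest levels,'' but for a genuine multi-term recurrence those terms persist at every level, and the renormalization $y_n := t_\pi^{-n} z_n$ then does not telescope: one would instead need an $\alpha$-stabilization argument (in the style of Perrin-Riou or Castella--Hsieh), whose feasibility in this higher-degree setting you would have to justify, not assume. Finally, even granting all of that, your construction lands in a $\pi$-isotypic quotient with $\Q_p$-coefficients, whereas the corollary asserts $y_n\in\Z[\cZ_K(\G,\H)]$; you flag this yourself, but it means your proof does not establish the result as stated. The honest path here really is to cite the construction of \cite[\textsection 5]{brooks-boumasmoud-jetchev} and use Corollary~\ref{cor:globalcycle} only to pin down the base field as $E(1)$.
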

Indeed, this is a restatement of the main result of \cite[\textsection 5]{brooks-boumasmoud-jetchev}, since the field called $L$ in \emph{loc. cit.} may be replaced with $E(1)$ due to Corollary \ref{cor:globalcycle}. It is evident that a significant generalization of these arithmetic results would arise from a weakening of the restrictions of Corollary \ref{cor:globalcycle}, which would in turn arise from a proper treatment of ramified primes and/or non-maximal compacts.

\subsection{Hecke polynomial}
Let $\calH = \calH(G_\tau, K_\tau)$ denote the local Hecke algebra, that is, the algebra of locally 
constant $\Z$-valued $K$-bi-invariant functions on $G_\tau$. 
A separate arithmetic question concerns the natural action of $\calH$ on $\Z[\mathcal{Z}_K(\G, \H)]$ 
(see Section \ref{sec:split-hecke} for the definition of the action). As originally observed by Langlands, the crucial properties of this action are reflected in a polynomial $H_w(x) \in \mathcal{H}[q^{\pm 1/2}][z]$, the so-called Hecke polynomial. 
This polynomial, for which we follow \cite{blasius-rogawski:zeta} with minor modifications as in \cite[\textsection 1.2.3, \textsection 4]{jetchev:unitary}), is defined precisely in Section~\ref{sec:split-hecke}. Heuristically, it is a family of $\overline{\Q}$-coefficient polynomials, indexed by the space of automorphic representations $\pi$ of $\G(\Af)$, whose specialization at a given $\pi$ on $\G$ gives the Euler factor at $w$ for an $L$-function attached to $\pi$. 

The following theorem explicitly computes the Hecke polynomial:
\begin{thmx}\label{theoremB}
The Hecke polynomial is given by
$$
H_w(z) = z^6 + c_5z^5 + c_4z^4 + c_3 z^3 + c_2 z^2 + c_1 z + c_0,
$$
where the coefficients $c_0, \dots, c_5 \in\mathcal{H}[q^{\pm 1/2}]$ are explicitly given in Proposition~\ref{thm:heckeformula}. In particular, the coefficients of the polynomial are in $\mathcal{H}$ and not just in $\mathcal{H}[q^{\pm 1/2}]$.
\end{thmx}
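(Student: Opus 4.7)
The plan is to apply the Blasius--Rogawski recipe directly: since $\tau$ splits in $E$, the group $G_\tau$ identifies with $\GL_3(F_\tau) \times \GL_2(F_\tau)$, and the Satake isomorphism for $\GL_n$ over a $p$-adic field is completely explicit, so computing $H_w(z)$ reduces to (i) a symbolic expansion of a degree-$6$ polynomial in Satake parameters, followed by (ii) applying the inverse Satake transform coefficient by coefficient.

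\textbf{Step 1: Identify the representation of $\widehat{\G}_\tau$.} At the split place $\tau$, the fixed embedding $\iota_\tau$ breaks the Shimura cocharacter $\mu_h$ into a pair of minuscule cocharacters of the two factors of the dual group $\widehat{\G}_\tau = \GL_3(\C) \times \GL_2(\C)$, corresponding to the signatures $(2,1)$ and $(1,1)$ at the distinguished archimedean place. The representation $r$ of $\widehat{\G}_\tau$ associated to the Shimura datum is then, up to a standard half-twist, the external tensor product of the standard representations of $\GL_3$ and $\GL_2$, which is of dimension $3 \cdot 2 = 6$. This accounts for the degree of $H_w(z)$.

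\textbf{Step 2: Write $H_w(z)$ symbolically in Satake parameters, and invert the Satake transform.} Following the normalizations of Section~\ref{sec:split-hecke}, the Hecke polynomial takes the schematic form
\[
H_w(z) = \prod_{i=1}^{3}\prod_{j=1}^{2}\bigl(z - q^{1/2} X_i Y_j\bigr),
\]
where $(X_1, X_2, X_3)$ and $(Y_1, Y_2)$ are the Satake parameters of the $\GL_3$ and $\GL_2$ factors respectively. Each coefficient $c_k$ is then, after expansion, a bi-symmetric polynomial in $(X,Y)$ of a uniform total bidegree, multiplied by a power of $q^{1/2}$. To convert this into an element of $\mathcal{H}[q^{\pm 1/2}]$ I would apply the inverse Satake transform for $\GL_n$: in the monomial basis, each elementary symmetric function of the $X_i$ (resp.\ $Y_j$) is, up to an explicit $q$-power coming from the half-sum of positive roots $\rho$, the image under Satake of the characteristic function of a specific double coset $K_\tau \diag(\varpi^\lambda) K_\tau$; other bi-symmetric monomials expand as $\Z[q^{\pm 1/2}]$-combinations of double-coset functions via Macdonald's formula. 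Doing this for each $k \in \{0,\ldots,5\}$ produces the explicit expressions recorded in Proposition~\ref{thm:heckeformula}.

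\textbf{Main obstacle.} The nontrivial part is the integrality assertion that each $c_k$ lies in $\mathcal{H}$ rather than just in $\mathcal{H}[q^{\pm 1/2}]$. Half-integer powers of $q$ enter from two sources: the $q^{1/2}$ factor in $H_w$ and the $q^{\langle \rho, \lambda \rangle}$ normalization of the Satake transform. For the coefficient $c_k$ the first source contributes a fixed exponent $6-k$, and the second contributes exponents whose parity is controlled by the bidegree of the bi-symmetric monomials appearing in $c_k$. I would carry out a parity argument bidegree by bidegree, using the fact that $\mu_h$ is minuscule and lies in the cocharacter lattice of the adjoint group, so that the two half-integer contributions always have the same parity and cancel. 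Once this parity cancellation is checked for every bi-symmetric monomial appearing in the expansion, one concludes that each $c_k \in \mathcal{H}$, completing the proof. The bookkeeping in this parity check is the most tedious part; everything else is an essentially mechanical computation, and it is presented in the explicit form of Proposition~\ref{thm:heckeformula}.
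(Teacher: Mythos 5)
Your overall plan---restrict the representation $r$ to the dual torus, expand the characteristic polynomial, and invert the Satake transform coefficient by coefficient---matches the skeleton of the paper's argument, but there are several concrete errors and gaps.

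\emph{Normalization.} The twist is $q^{\dim X/2} = q^{3/2}$ (the Shimura variety is a threefold), not $q^{1/2}$; and the representation $r$ is $(A_V, A_W) \mapsto {}^{t}A_V^{-1} \otimes {}^{t}A_W^{-1}$, i.e.\ the external tensor product of the \emph{contragredients} of the standard representations, so its eigenvalues upon restriction to $\widehat{T}$ are $x_i^{-1} y_j^{-1}$, not $X_i Y_j$. With both of these changes your characteristic polynomial is literally a different polynomial, and expanding it would not reproduce the coefficients of Proposition~\ref{thm:heckeformula}.

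\emph{Inverting the Satake transform.} You propose Macdonald's formula in the monomial basis; the paper instead reduces via \eqref{eq:glpgl} to $\PGL_3 \times \GL_2$ and inverts the transform building-theoretically: the twisted restriction of a double-coset function is expressed through the canonical retraction $\rho_{\cA,\cC}$ onto a fixed apartment, and the retraction multiplicities are counted by enumerating hexagons around a vertex (Proposition~\ref{prop:count}, Lemma~\ref{lem:count}). Either route could in principle yield explicit formulas, but since the theorem asserts the specific output of Proposition~\ref{thm:heckeformula}, the computation must actually be carried out; your proposal defers it as ``mechanical.''

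\emph{Integrality.} Your parity-cancellation intuition is exactly the paper's remark that integrality follows from the normalization of the Satake transform in Gross \cite[\S 8]{gross:satake}. However, the stated justification, that $\mu_h$ ``lies in the cocharacter lattice of the adjoint group,'' does not hold for this datum: $\langle \rho_W, \mu_W\rangle = 1/2$ is a genuine half-integer. The correct reason is that every weight $\lambda$ of $V_{\widehat{\mu}}$ differs from $\widehat{\mu}$ by an element of the root lattice, which pairs integrally with $\rho$; hence $\langle\rho,\lambda\rangle \equiv \langle\rho,\widehat{\mu}\rangle = \dim X/2 \pmod{\Z}$ for every weight $\lambda$, and these half-integers cancel uniformly against the $q^{\dim X/2}$ twist. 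The paper does not run this argument in detail; it simply reads off integrality from the explicit final formula, and flags Gross's normalization as an alternative.
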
 
Note that the latter consequence could be deduced directly from the normalization of the Satake transform in \cite[\textsection 8]{gross:satake} without explicitly computing the $c_i$ (see Section \ref{sec:split-hecke}). 

Although this is essentially a local formula, so that one is tempted to view it as a result about general linear groups, the definition of the Hecke polynomial depends on the choice of cocharacter defining the Shimura datum, which is global. In particular, the polynomial of Theorem \ref{theoremB} cannot be deduced from the polynomials for $\GL_3$ and $\GL_2$ computed in \cite{blasius-rogawski:zeta}, where the choice of cocharacter is different.

\subsection{Horizontal distribution relation}
The preceding two theorems give a description of the local Galois and Hecke actions at $\tau$ on the free abelian group $\Z[\cZ_K(\G, \H)]$ generated by the set of special cycles. One would like to give a relation between these two actions. To do so, for any $\xi \in \Z[\cZ_K(\G, \H)]$, write $E(\xi)$ for the compositum of the fields of definitions of the cycles in the support of $\xi$ (see \cite[\textsection 1.2.4]{jetchev:unitary}). 

The formulas for the classical distribution relation for $\GL_2$ (i.e. the relation between $T_p$ and the trace operator on Heegner points) suggests the following ``horizontal distribution relation,'' which we prove (see the exposition in \cite[\textsection 1.2.1]{jetchev:unitary} for a more detailed historical motivation):

\begin{thmx}\label{theoremC}
Suppose that $\xi_0 \in \cZ_K(\G, \H)$ has invariants $(0, 0, 0, 0, 0)$ at $\tau$. Then there is a cycle $\xi \in \Z[\cZ_K(\G, \H)]$ that satisfies $E(\xi)_w = E(\tau)_w$, such that one has
$$
H_w(\Fr_w) \xi_0 = \Tr_{E(\tau)_w/E(1)_w} \xi.
$$
\end{thmx}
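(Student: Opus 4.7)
The plan is to interpret both sides of the identity as explicit elements of $\Z[\cZ_K(\G,\H)]$ expressed through the parametrization of $H_\tau$-orbits on $G_\tau/K_\tau$ by the $5$-tuples of Theorem~\ref{thm:orbits}A, and then match them orbit-by-orbit. Concretely, by Shimura reciprocity (Section~\ref{subsec:shimrec}), the action of $\Fr_w$ on a cycle $\mathcal{Z}_K(g)$ translates into right multiplication of $g$ by an explicit element $\sigma_\tau \in G_\tau$ (the image of the Frobenius under local reciprocity at~$w$). Combined with the explicit formula for the Hecke polynomial coefficients from Theorem~\ref{theoremB} (Proposition~\ref{thm:heckeformula}), this expands $H_w(\Fr_w)\xi_0$ as a finite $\Z$-linear combination of cycles $\mathcal{Z}_K(g_0\, \sigma_\tau^j h)$ as $j$ runs over $0,\ldots,6$ and $h$ runs over coset representatives in the expansion of each $c_i$.

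Next, I would sort the resulting cycles into $H_\tau$-orbits by running the algorithm of Section~\ref{algorithm} to compute the invariants $(s,r,d,m,n)$ of each term. Theorem~\ref{theoremA} then attaches a local conductor to every orbit appearing. The expected qualitative picture is that the expansion breaks into two parts: orbits of conductor~$0$ (fixed by the Galois action at $\tau$) and orbits of conductor~$1$ (permuted transitively, up to residue-degree factors, by $\Gal(E(\tau)_w/E(1)_w)$). The conductor-$0$ contributions should either cancel in the $\Z$-linear combination or be absorbed into the arrangement determined by the expansion of $H_w(z)$ via the Satake normalization; the conductor-$1$ pieces are the candidate for $\xi$.

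I would then define $\xi$ as the conductor-$1$ part of $H_w(\Fr_w)\xi_0$, choosing representatives so that $E(\xi)_w = E(\tau)_w$. Because Shimura reciprocity identifies $\Gal(E(\tau)_w/E(1)_w)$ with a subgroup of $H_\tau$-stabilizers acting on the corresponding double cosets, the trace $\Tr_{E(\tau)_w/E(1)_w}\xi$ equals the sum of $\xi$ over this Galois group, which is in turn a sum of cycles indexed by specific cosets in $G_\tau/K_\tau$. Verifying that this sum agrees with the full expansion of $H_w(\Fr_w)\xi_0$ is then a matter of comparing two explicit multisets of double cosets in $H_\tau\backslash G_\tau/K_\tau$ together with their multiplicities.

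The main obstacle is precisely this last combinatorial matching, now carried out on the two-dimensional Bruhat--Tits building $\mathcal{B}(\GL_3(F_\tau))\times\mathcal{B}(\GL_2(F_\tau))$ rather than on the $1$-dimensional tree treated in \cite{jetchev:unitary}. The $H_\tau$-orbit structure on apartments of $\mathcal{B}(\GL_3(F_\tau))$ is what forces the $5$-tuple parametrization (and the somewhat elaborate six-term shape of $H_w(z)$), and controlling how the individual Hecke cosets in each $c_i$ redistribute among these orbits under multiplication by $\sigma_\tau^j$ is the technical heart of the argument. Once this bookkeeping is set up carefully, the identity should emerge as a clean consequence of Theorems~\ref{theoremA} and~\ref{theoremB}, in direct analogy with the classical $\GL_2$ distribution relation for Heegner points.
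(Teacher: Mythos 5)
Your plan — expand $H_w(\Fr_w)\xi_0$ into cycles, sort by the invariants of Theorem~\ref{thm:orbits}, and compare against a trace — follows the same broad strategy as the paper, which first proves the identity at the level of invariants (Theorem~\ref{thm:distrel-inv}) and then lifts it to $\Z[\cZ_K(\G,\H)]$ in Section~\ref{sec:split-distrel-cycles}. But there are two genuine gaps in the mechanism you describe.

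First, your proposal to ``define $\xi$ as the conductor-$1$ part of $H_w(\Fr_w)\xi_0$'' and to expect the conductor-$0$ contributions to ``either cancel in the $\Z$-linear combination or be absorbed'' does not work. The explicit expansion in Section~\ref{subsec:explicit} shows that the conductor-$0$ terms emphatically do not cancel; they carry large coefficients. The actual reason they are in the image of $\Tr_{E(\tau)_w/E(1)_w}$ is a divisibility statement: $H_w(h_{\Fr})\cdot(0,0,0,0,0,0)$ lies in $(q-1)\Z[\Inv]$, and $q-1 = |\cO_0^\times/\cO_1^\times| = [E(\tau)_w : E(1)_w]$. Since $\Tr_{E(\tau)_w/E(1)_w}$ applied to a conductor-$0$ cycle simply multiplies it by $q-1$, a conductor-$0$ term with coefficient $(q-1)a$ equals $\Tr(a\,\xi')$. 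So $\xi$ must contain conductor-$0$ cycles as well, with those $(q-1)$ factors stripped off; it is not merely the conductor-$1$ piece. This $(q-1)$-divisibility is the second assertion of Theorem~\ref{thm:distrel-inv} and is not something that follows from ``the Satake normalization'' — it is checked by the explicit building-theoretic computation of the Hecke action on $\Z[\Inv]$ via Algorithm~\ref{alg:invs} and Lemmas~\ref{lem:gprime}--\ref{lem:gdoubleprime}.

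Second, for the one conductor-$1$ invariant $\nu=(0,0,-1,2,1,0)$ that appears, there is a remaining step you have not addressed: the Galois conjugates $\xi_\nu^{\Art_{E_w}(x)}$, $x\in\cO_0^\times/\cO_1^\times$, are distinct cycles, and one must show their coefficients $m_\nu(x)$ are all equal before one can write this piece as a trace. Your phrase ``permuted transitively, up to residue-degree factors'' gestures at this but does not supply the argument. The paper's argument is that the left-hand side is $D_w^0$-invariant because $\xi_0$ is $D_w^0$-fixed (it has conductor $0$) and the Hecke operators commute with the Galois action; the $D_w^0$-invariance of the right-hand side then forces the coefficients on a single $D_w^0$-orbit to be constant. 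Also, a small but real error: the Frobenius acts by \emph{left} multiplication by $h_\Fr\in H_\tau$ (equation~\eqref{eq:Frob}), not by right multiplication by some $\sigma_\tau$; having Frobenius on the left and Hecke operators on the right is precisely what makes the two actions commute and enables the $D_w^0$-invariance argument.
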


Note that one could instead state the theorem globally, at the cost of introducing an auxiliary field $L$ of definition of $\xi_0$ which is an extension of $E$ in which $\tau$ splits completely. In this case, one replaces the trace over the local Galois group with the trace over the corresponding decomposition group (see e.g. the formulation of \cite[\textsection 1]{brooks-boumasmoud-jetchev}). 

Theorem \ref{theoremC}, in light of \cite[Thm.1.6]{jetchev:unitary} and the classical results for Heegner points, provides strong evidence that some general result should hold which expresses horizontal distribution relations for arbitrary embeddings of classical groups in terms of Hecke polynomials evaluated on Frobenius, but we refrain from formulating a conjecture at this time.

One also expects a vertical distribution relation to hold, which, among other things, would allow one to remove the ``inert'' hypothesis from Corollary \ref{cor:vertical}. Proving such a relation is part of the dissertation project (in progress) of the first author.

\subsubsection{Applications to Kolyvagin systems and Iwasawa theory}
Relations between the Hecke and Galois actions of the type proven in Theorem~\ref{theoremC}, \cite[Thm.1.6]{jetchev:unitary} and \cite[Thm.1.2]{brooks-boumasmoud-jetchev} are the main relations in the 
construction of a novel Euler system for $\U(1,1) \hra \U(2, 1) \times \U(1,1)$ analogous to the Euler system of Heegner points due to Kolyvagin \cite{kolyvagin:euler_systems} (see also \cite{gross:kolyvagin}) and its associated Kolyvagin system \cite{howard:heeg}. The latter have been used for two purposes: 1) proving the Birch and Swinnerton-Dyer conjecture when the basic Heegner point is non-torsion (and hence, the Kolyvagin system is non-trivial); 2) proving one divisibility in the anti-cyclotomic main conjecture of Iwasawa theory \cite{bertolini:compositio}, \cite{howard:heeg}. Although the auxiliary (Kolyvagin) primes have been chosen to be inert in $E$, the distribution relations and arithmetic applications in 2) have been using split primes. In addition, other $p$-adic applications (central value formulas for anticyclotomic $p$-adic $L$-functions) such as the recent formula of Bertolini--Darmon--Prasanna \cite{bertolini-darmon-prasanna} and its generalizations in \cite{brooks:shimura} and \cite{liu-zhang-zhang}, use split primes as well. Generalizing the latter to the unitary setting above is work in progress by Skinner and Prasanna. 

\subsection{Organization of the paper}
As already mentioned, a technical obstacle in the split case is the higher-dimensionality of the relevant buildings. This intervenes heavily in the proof of all three main theorems. In Section~\ref{sec:prelim} we recall basic facts about unitary groups, Shimura varieties, special cycles, and the Galois action via reciprocity laws on connected components as well as how the computation of the latter reduces to a question about local invariants of $H_\tau$-orbits on $G_\tau/K_\tau$ (which is the set of hyperspecial vertices on the product of the buildings for $\U(V)(F_\tau)$ and $\U(W)(F_\tau)$). In Section~\ref{sec:split-orbits}, we classify these $H_\tau$-orbits. We apply this classification to Galois orbits and computations of local conductors in Section~\ref{sec:split-galois}, where Theorem A is proven. In order to deal with the non-trivial Frobenius element, we also state a variant of the classification (Theorem \ref{thm:orbits}B), in which the $H_\tau$-orbits are further partitioned into Frobenius orbits. In Section~\ref{sec:split-hecke}, we prove Theorem B. To compute the Hecke polynomial, we first compute its coefficients under the Satake transform as elements of the Hecke algebra of the maximal torus, then compute the full polynomial by inverting the Satake transform (the latter reduces to a combinatorial problem on the Bruhat--Tits buildings, thanks to the theory of canonical retractions). The proof of Theorem C begins in Section~\ref{sec:split-distrel-inv}, where the relations are instead established on the set of invariants defined in Theorem~\ref{thm:distrel-inv}. This allows us to finish, in Section~\ref{sec:split-distrel-cycles}, by comparing the action of the Hecke operators on the corresponding buildings to the building-theoretic interpretation of the Galois action established in Section~\ref{sec:split-galois}.

\section{Preliminaries}\label{sec:prelim}

\subsection{Shimura varieties for unitary groups and special cycles}
Let $K$ be a compact open subgroup of $\G(\A_f)$ satisfying the conditions of \cite[\textsection 2.1.4] {jetchev:unitary} and such that $\tau$ is allowable for $(\G, \H, K)$ as in Definition 1.1 of \emph{loc. cit.} (the residue characteristic is assumed odd in that definition, but this assumption is not needed at split places). These assumptions imply that
\begin{itemize}
\item One has a factorization $K = K_\tau \cdot K^{\tau}$ with $K_\tau \subset (\U(V) \times \U(W))(F_\tau)$ and $K^{(\tau)} \subset (\U(V) \times \U(W))(\A_{F, f}^{(\tau)})$.
\item One has $K_\tau = K_{V, \tau} \times K_{W, \tau}$ where for $\star \in \{ V, W \}$, $K_{\star, \tau}$ denotes a hyperspecial maximal compact subgroup of $G_{\star, \tau}$. 
\end{itemize}

Let $X$ be the Hermitian symmetric domain for $\G$ defined in \cite[\textsection 2.2.7-\textsection2.2.7]{jetchev:unitary}; then $X = X_V \times X_W$ where $X_\star$ is a Hermitian symmetric domain for $\Res_{F/\Q} \U(\star)$. Let $Y \subset X$ denote the diagonal image of $X_W$ in $X$. 
The Shimura datum $(\G, X)$ then gives rise to a connected Shimura variety $\Sh_K(\G, X)$. This is a $3$-fold with a model over $E$ (the canonical model), whose complex points are given by
\begin{eqnarray*}
\Sh_K(\G, X)(\C) &=& \G(\Q) \comod \G(\A_f) \times X / K \\
&=& (\U(V) \times \U(W))(F) \comod \U(V) \times \U(W)(\mathbb{A}_{F, f}) \times X / K.
\end{eqnarray*}
The $\mathbb{C}$-points of the cycle $\cZ_K(g)$ of the introduction are then given by
$$
\cZ_K(g) := [gK \times Y] \subset \Sh_K(\G, X)(\C).
$$
\subsection{Shimura reciprocity laws and Galois action on special cycles}\label{subsec:shimrec}
Consider the torus 
$$
\Tbf = \Res_{E / \Q} \mathbb{G}_m = \Res_{F/\Q} \Res_{E/F} \mathbb{G}_m
$$
over $\Q$. Let $\Tbf^1$ be the subtorus $\Res_{F/\Q} \U(1)$, where $\U(1)$ denotes the unique (up to isomorphism) non-trivial $1$-dimensional unitary group over $F$ splitting over $E$, i.e. the norm one subtorus of $\Res_{E/F} \mathbb{G}_m$. 

There is a determinant map $\det \colon \H \ra \T^1$ and a map $r \colon \Tbf \ra \Tbf^1$ given on $R$-points, for $R$ a $\Q$-algebra, by 
$r(x) = \overline{x} / x$. Letting 
$$
\Ver_{E / F} \colon \Gal(F^{\ab} / F) \ra \Gal(E^{\ab} / E)
$$ 
be the transfer map, write $E[\infty]$ for the abelian extension of $E$ determined by the image of $\Ver_{E /F}$. The Artin map $\Art_E \colon \Tbf(\Af) \ra \Gal(E^{\ab} / E)$ induces an isomorphism 
$\Art^1_E \colon \Tbf^1(\Af) / \Tbf^1(\Q) \xra{\sim} \Gal(E[\infty] / E)$. (We normalize the reciprocity map so that uniformizers correspond to geometric Frobenii.)
Writing $\H^1$ for the kernel of the determinant map, the quotient of $\Nbf_{\G}(\Hbf)(\Q) \Hbf(\Af)$ by the normal subgroup $\Nbf_{\G}(\Hbf)(\Q)\Hbf^1(\Af)$ is isomorphic to 
$$
\frac{\Nbf_{\G}(\Hbf)(\Q) \Hbf(\Af)}{\Nbf_{\G}(\Hbf)(\Q) \Hbf^1(\Af)} \isom \frac{\Tbf^1(\Af)}{\Tbf^1(\Q)} \xra{\sim} \Gal(E[\infty] / E), 
$$
where the last map is $\ds \Art_E^1 \colon \frac{\Tbf^1(\Af)}{\Tbf^1(\Q)} \xra{\sim} \Gal(E[\infty] / E)$. 
There is thus a map
$$
{\det}^{*} \colon \Nbf_{\G}(\Hbf)(\Q) \Hbf(\Af) \twoheadrightarrow \Tbf^1(\Af) / \Tbf^1(\Q)
$$ 
induced by the determinant map $\det$. It is shown in \cite[Lem.2.5]{jetchev:unitary}, as a consequence of the Shimura reciprocity law and a strong approximation argument, that for any $\sigma \in \Gal(E^{\ab} / E)$ and any element $h_\sigma \in \Nbf_{\G}(\Hbf)(\Q) \Hbf(\Af)$ that satisfies   
$$
\Art^1_E({\det}^*(h_\sigma)) = \sigma |_{E[\infty]},
$$
one has
\begin{equation}\label{eq:gal}
\cZ_K(g)^{\sigma} = \cZ_K(h_\sigma g). 
\end{equation}

\subsection{Normalization isomorphism in the split case}\label{normalizationgln}
Recall that $\tau$ is split and that $w$ and $\wbar$ are the two places of $E$ above $\tau$. In this case, 
$\U(V)(F_\tau) \times \U(W)(F_\tau)$ can be identified with $\GL(V_w) \times \GL(W_w)$.  To fix such an identification, for an arbitrary Hermitian $E$-space $\mathscr V$, let 
$$
\U(\mathscr V)(F_\tau) = \{g \in \GL(\mathscr V)(E \otimes_F F_\tau) \colon \langle gv, gw \rangle_\tau = \langle v, w\rangle_{\tau}, \ \forall v, w \in \mathscr V \otimes_F F_\tau \}, 
$$
where $\langle \,, \rangle_{\tau} = \langle \,, \rangle \otimes_F F_\tau$. Since 
\begin{equation}\label{eq:isomE}
E_\tau := E \otimes_F F_\tau = E_w \oplus E_{\wbar} \isom F_\tau \oplus F_\tau,
\end{equation}
where the action of complex conjugation on the left-hand side corresponds to the involution $(s, t) \mapsto (t, s)$ on the right-hand side, one has  
\begin{equation}\label{eq:Vdecomp}
\mathscr V \otimes_F F_\tau = \mathscr V_w \oplus \mathscr V_{\wbar}, 
\end{equation}
and 
\begin{equation}
\GL(\mathscr V)(E \otimes_F F_\tau) = \GL(\mathscr V_w) \times \GL(\mathscr V_{\wbar}) \isom \GL(\mathscr V)(F_\tau) \times \GL(\mathscr V)(F_\tau).
\end{equation}
For any $v_{1}, v_2 \in V \otimes_F F_\tau$, write $v_1 = v_{1,w} + v_{1, \wbar}$ and $v_2 = v_{2, w} + v_{2, \wbar}$ according to \eqref{eq:Vdecomp}. Then 
$$
\langle (g_1, g_2) v_{1}, (g_1, g_2) v_2 \rangle_{\tau} = \left ( \langle g_2 v_{2, \wbar}, g_1 v_{1, w} \rangle, \langle g_1 v_{1, {\wbar}}, g_2 v_{2, w}\rangle \right ), 
$$
and 
$$
\langle v_{1}, v_2 \rangle_{\tau} = \left ( \langle v_{2, \wbar}, v_{1, w} \rangle, \langle v_{1, {\wbar}}, v_{2, w}\rangle \right ), 
$$
and hence, $\langle g_2 v_{2, \wbar}, g_1 v_{1, w} \rangle = \langle v_{2, \wbar}, v_{1, w} \rangle$ for all $v_{1, w} \in V_w$ and $v_{2, \wbar} \in V_{\wbar}$. It follows that the map $g = (g_1, g_2) \mapsto g_1$ defines an isomorphism 
$$
\U(\mathscr V)(F_\tau) \isom \GL(\mathscr V_w).
$$ 
Recalling the fixed embedding $\iota_\tau \colon \overline{F} \hra \overline{F}_\tau$, if $w$ is the place of $E$ corresponding to this embedding, then there is no ambiguity in writing $\mathscr V_w$ as $\mathscr V_\tau$ and viewing it as a vector space over $E_w = F_\tau$. We will retain this notation throughout.

\subsection{Action of Frobenius on unramified special cycles}\label{subsec:frob}

If $\cZ_K(g)$ is defined over a number field that is unramified at $\tau$, then to describe $\Fr_w \cZ_K(g)$, where $\Fr_w$ is the geometric Frobenius, it suffices to take a matrix $h_{\Fr} \in \GL(W_\tau) \isom \U(W)(F_\tau)$ that satisfies $v(\det(h_{\Fr})) = -1$ (e.g., $\diag(\varpi^{-1}, 1)$). The description \eqref{eq:gal} of the action of the Galois group on special cycles together with the discussion in Section~\ref{normalizationgln} imply that  
\begin{equation}\label{eq:Frob}
\Fr_w \cdot \cZ_K(g) = \cZ_K(h_{\Fr} g), 
\end{equation}
where 
$h_{\Fr}$ denotes the image in $\G(\Af)$ under the natural embeddings 
$$
H_\tau = \U(W)(F_\tau) \hra \U(W)(\mathbb{A}_{F}) \hra \U(V)(\mathbb{A}_{F}) \times \U(W)(\mathbb{A}_{F}) \isom \G(\Af).
$$ 

\subsection{Orders in $E_\tau$ and filtrations}
Let the place $\tau$ and $w$ be as above. 
Under the identification~\eqref{eq:isomE}, the maximal order $\cO_{E_\tau}$ of the \'etale algebra $E_\tau$ is 
$$
\cO_{E_\tau} \isom \cO_{E_w} \oplus \cO_{E_{\overline{w}}} \isom \cO_{F_{\tau}} \oplus \cO_{F_{\tau}}.
$$  
Define $\cO_0 = \cO_{E_\tau}$ and more generally, $\cO_c = \cO_{F_\tau} + \varpi^c \cO_{E_\tau}$ where 
$\cO_{F_\tau} \subset \cO_{E_\tau}$ via the diagonal embedding and $\varpi$ is a uniformizer of $F_\tau$. There is thus a filtration
\begin{equation}\label{eq:filt1}
\cO_{E_\tau} =: \cO_0 \supset \cO_1 \supset \cO_2 \supset \dots. 
\end{equation}
The image of $E_\tau^\times$ under the map $r \colon x \mapsto \overline{x} / x$ is the group 
$$
\cO_0^1 := \{(x^{-1}, x) \in E_\tau^\times \colon x \in F_\tau^\times \}.
$$
In other words, $\cO_0^1$ is the image of the map $u \colon F_\tau^\times \ra E_\tau^\times$, $u(x) := (x^{-1}, x)$. 
The filtration \eqref{eq:filt1} induces a filtration 
$$
\cO_0^1 \supset \cO^1_1 \supset \cO^1_2 \supset \dots,  
$$
where $\cO^1_c = r(\cO_c^\times)$. It is not hard to check that $\cO_c^1 = u(1 + \varpi^c \cO_{F_\tau})$. This gives a filtration on $H_\tau$, setting $H_c := \det^{-1}(\cO^1_c)$. Via the isomorphism $H_\tau \isom \GL_2(F_\tau)$, this filtration corresponds to the filtration on $\GL_2(F_\tau)$ that is the preimage (under the determinant map) of the filtration 
on $\cO_{F_\tau}^\times$:  
$$
\cO_{F_\tau}^\times \supset 1 + \varpi \cO_{F_\tau} \supset 1 + \varpi^2 \cO_{F_\tau} \supset \dots. 
$$
Abusing notation, denote these subgroups of $H_\tau \isom \GL_2(F_\tau)$ by 
$$
H_0 \supset H_1 \supset \dots
$$ 

\begin{lem}\label{lem:norm}
One has $r^{-1} (\cO_c^1) = \varpi^{\Z} \cO_c^\times$. 
\end{lem}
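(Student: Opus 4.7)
The plan is to argue via the short exact sequence
\[
1 \longrightarrow F_\tau^\times \longrightarrow E_\tau^\times \xrightarrow{\ r\ } \cO_0^1 \longrightarrow 1,
\]
in which $F_\tau^\times$ is embedded diagonally. Indeed, under the identification $E_\tau = F_\tau \oplus F_\tau$, the involution becomes $(s,t) \mapsto (t,s)$, so $r(s,t) = (t/s, s/t)$; the kernel is precisely $\{(s,s) : s \in F_\tau^\times\}$, i.e.\ the diagonal copy of $F_\tau^\times$, and surjectivity onto $\cO_0^1 = \{(x^{-1}, x)\}$ is immediate.

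Once this is in hand, I would make two elementary observations. First, $F_\tau^\times \subseteq \varpi^{\Z} \cO_c^\times$: an element $\varpi^n u \in F_\tau^\times$ with $u \in \cO_{F_\tau}^\times$ maps diagonally to $\varpi^n (u,u)$, and $(u,u) = u + \varpi^c \cdot 0$ lies in $\cO_c^\times$ since $u$ is a unit in $\cO_{F_\tau}$. Second, $r(\varpi^{\Z} \cO_c^\times) = \cO_c^1$: the diagonal factor $\varpi^{\Z}$ lies in $\ker r$, so $r(\varpi^{\Z} \cO_c^\times) = r(\cO_c^\times)$, and this equals $\cO_c^1$ by definition.

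With these two facts the containment $\varpi^{\Z} \cO_c^\times \subseteq r^{-1}(\cO_c^1)$ is a tautology. For the reverse containment, given $x \in E_\tau^\times$ with $r(x) \in \cO_c^1$, the second observation produces $y \in \cO_c^\times$ with $r(y) = r(x)$, hence $x/y \in \ker r = F_\tau^\times \subseteq \varpi^{\Z} \cO_c^\times$ by the first observation, giving $x = y \cdot (x/y) \in \varpi^{\Z} \cO_c^\times$.

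I do not anticipate a real obstacle here: the only thing to be careful about is the bookkeeping of the split identification and the fact that the valuations of the two components of any element of $\varpi^{\Z} \cO_c^\times$ agree (so that conjugation by $r$ lands inside $1 + \varpi^c \cO_{F_\tau}$, as must be the case to hit $\cO_c^1$). The diagram-chase formulation avoids having to write out the congruence $(a + \varpi^c b)/(a + \varpi^c b') \equiv 1 \pmod{\varpi^c}$ by hand.
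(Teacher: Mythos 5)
Your proof is correct, and it takes a genuinely different route from the paper's. You work structurally: since $\cO_c^1$ is by definition $r(\cO_c^\times)$, the lemma becomes the formal identity $r^{-1}\bigl(r(\cO_c^\times)\bigr) = \cO_c^\times \cdot \ker r$, and then everything reduces to two easy facts — $\ker r$ is the diagonal copy of $F_\tau^\times$ (so $r(s,t) = (t/s, s/t)$ kills exactly the diagonal), and $F_\tau^\times = \varpi^{\Z}\cO_{F_\tau}^\times \subseteq \varpi^{\Z}\cO_c^\times$, whence $\cO_c^\times \cdot F_\tau^\times = \varpi^{\Z}\cO_c^\times$. The paper instead argues element-by-element: it uses the alternate description $\cO_c^1 = u(1+\varpi^c\cO_{F_\tau})$ to identify $r^{-1}(\cO_c^1)$ with the set of pairs $(x,y)$ satisfying $xy^{-1} \in 1+\varpi^c\cO_{F_\tau}$, and then does the valuation bookkeeping (noting $\ord(x)=\ord(y)$ and factoring out $\varpi^{\ord(x,y)}$) to match this with $\varpi^{\Z}\cO_c^\times$. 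Your version buys brevity and avoids the congruence chase entirely; its only dependence is on which description of $\cO_c^1$ one starts from — if one is handed $\cO_c^1$ in the form $u(1+\varpi^c\cO_{F_\tau})$ (the form actually used downstream for the conductor and trace computations), you would still need the small verification that $r(\cO_c^\times) = u(1+\varpi^c\cO_{F_\tau})$, which is precisely the computation the paper's proof absorbs. So the two arguments distribute the explicit work differently, but both are complete; your exact-sequence framing also makes transparent why the answer is stable under multiplying by $\varpi^{\Z}$, i.e.\ why only the unit part of the norm subgroup matters.
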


\begin{proof}
Consider the commutative diagram
$$
\xymatrix{
E_{\tau}^{\times}  \ar@{<-^{}}[r]^{r} \ar@{<-^{)}}[d]_{u}& E_{\tau}^{\times}\ar@{->>}[dl]^\varphi \ar@{->>}[d]^{} \\ 
F_{\tau}^{\times} \ar@{<<-}[r]  &  E_{\tau}^{\times}/\varpi^{\Z}
}
$$
where $\varphi(x, y) =  xy^{-1}$, and $u(x) = (x^{-1}, x)$. Then
$$r^{-1}(\cO_c^1)=\varphi^{{-1}}(u^{-1}(\cO_{c}^1))=\left\{(x,y)\in E_{\tau}^{\times}: xy^{-1}\in 1 + \varpi^c \cO_{F_\tau}\right\},$$
For such a pair $(x,y)$, one has $\ord(x)=\ord(y)$. Write $\ord(x, y)$ for this common valuation. There there exist integers $m, n$ such that
$$ \varpi^{{-ord(x,y)}}x \in 1 + \varpi^n \cO_{F_\tau}\text{ and } \varpi^{{ord(x,y)}}y^{-1} \in 1 + \varpi^m \cO_{F_\tau}.$$
The element $(x,y)$ is then in $r^{-1}(\cO_c^1)$ if and only if $\min{(m,n)}=c$, if and only if $\varpi^{{-ord(x,y)}}x,\varpi^{{-ord(x,y)}}y \in 1 + \varpi^c \cO_{F_\tau}$.
\end{proof}

\section{The action of $H_\tau$ on $G_\tau/K_\tau$}\label{sec:split-orbits}
Since this section is local at $\tau$, we switch to a notation schema that suppresses localizations at $\tau$: write  $k_0 = F_\tau$ with uniformizer $\varpi$ and ring of integers $\cO$. Write $V$ for an arbitrary $3$-dimensional vector space over $k_0$ and $W$ for a $2$-dimensional subspace; we write $D$ for a line in $V$ which is not contained in $W$. Write $G_W = \GL(W)(k_0)$, $G_V = \GL(V)(k_0)$ and $G = G_V \times G_W$, and let $H$ be the group $G_W$ viewed as a subgroup of $G$ via the corresponding diagonal embedding. Let $H_0 = \{h \in H \colon v (\det(h)) = 0\}$ where $v \colon k_0 \ra \Z$ denotes the valuation. There is an exact sequence 
$$
0 \ra H_0 \ra H \xra{v \circ \det} \Z \ra 0. 
$$
Choose lattices $L_D$ and $L_W$ in $D$ and $W$, respectively, and set $L_V = L_W \oplus L_D$.  Write $K_V$ (resp., $K_W$), for the stabilizer of the homothety class of the lattice $L_V$ (resp., of the lattice $L_W$), in $G_V$ (resp., $G_W$). Finally, choose an $\cO$-basis $\{e_1, e_2\}$ of $W$ and an $\cO$-basis $\{e_3\}$ for $D$. This choice identifies $G_V \isom \GL_3(k_0)$ and $G_W$ with the subgroup of block-diagonal matrices of the form 
$
\ds \left \{
\begin{pmatrix}
\star & \star & 0 \\ 
\star & \star & 0 \\ 
0 & 0 & 1 
\end{pmatrix} 
\subset \GL_3(k_0) \right \}. 
$

\setcounter{paragraph}{0}
\subsection{Classification result}\label{subsec:Horbits}
For $\star \in \{V, W\}$, let $\Hyp_{\star} = G_\star/K_\star$ be the set of lattices in $\star$. If $L_V'$ is a lattice in $V$, we will say that $L_V'$ is \emph{adapted to the decomposition} $V = W \oplus D$, or just \emph{adapted}, if $L_V' = (L_V' \cap W) \oplus (L_V' \cap D)$. 

The following theorem classifies $H$-orbits and $H_0$-orbits on $\Hyp_V \times \Hyp_W$.

\begin{theorem}\label{thm:orbits}
{\bf [A.]} ($H$-orbits): The set of $H$-orbits on elements of $\Hyp_V \times \Hyp_W$ is in bijection with the set of quintuples $(s, r, d, m, n) \in \Z^2 \times \mathbb{Z}_{\geq 0}^3/\sim$, where the equivalence relation $\sim$ is given by
\begin{itemize}
\item $(s, r, d, m, n) \sim (s, r, d, 0, n) \text{ for } d \geq 0, m \leq n$.
\item $(s, r, d, m, n) \sim (s, r, d, m, 0) \text{ for } d \geq 0, m \geq n + d$.
\end{itemize}
A representative with invariants $(r, s, d, m, n)$ is the lattice determined by the column vectors of the matrices
$$
\varpi^s \begin{pmatrix}1 & 0 & \varpi^{-m} \\ 0 & 1 & \varpi^{-n} \\ 0 & 0 & 1 \end{pmatrix} \qquad \text{ and } \qquad \varpi^r \begin{pmatrix} \varpi^{d} & 0 \\ 0 & 1  \end{pmatrix}.
$$

\noindent {\bf [B.]} ($H_0$-orbits): The $H_0$-orbits contained in a given $H$-orbit corresponding to a quintuple $(s, r, d, m, n)$ are indexed by a parameter $k \in \Z$. A representative of invariant $(k, s, r, d, m, n)$ is explicitly given by the column vectors of the matrices
$$
\varpi^s \begin{pmatrix}\varpi^k & 0 & \varpi^{k-m} \\ 0 & 1 & \varpi^{-n} \\ 0 & 0 & 1 \end{pmatrix} \qquad \text{ and } \qquad \varpi^r \begin{pmatrix} \varpi^{k+d+r} & 0 \\ 0 & \varpi^r  \end{pmatrix}.
$$
\end{theorem}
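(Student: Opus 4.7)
The plan is to put an arbitrary pair $(L'_V, L'_W)$ into a canonical matrix form by combining the left $H$-action with the right $K_V \times K_W$-action, and to read the five invariants off this form.

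First I would write $L'_V = A\cdot \cO^3$ and $L'_W = B\cdot\cO^2$ for $A \in \GL_3(k_0)$ and $B \in \GL_2(k_0)$ in the chosen basis. The Iwasawa decomposition $\GL_3(k_0) = P(k_0)\cdot\GL_3(\cO)$ for the maximal parabolic $P = \Stab(D) \subset \GL_3$ brings $A$ into block upper-triangular form $A = \left(\begin{smallmatrix}M & u \\ 0 & c\end{smallmatrix}\right)$; the integer $s := v(c)$ is preserved by every subsequent action and supplies the first invariant.

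Next I would exploit that under the left $H$-action the pair transforms as $(M,B) \mapsto (hM, hB)$, so $M^{-1}B \in \GL_2(k_0)$ is $H$-invariant, while its residual right $\GL_2(\cO)\times\GL_2(\cO)$-action comes from $K_V$ and $K_W$. The Smith normal form of $M^{-1}B$ thus yields two further integers which, after subtracting $s$, give the invariants $r$ and $d\ge 0$. A suitable choice of $h \in H$ then achieves $M = \varpi^s I_2$ and $B = \diag(\varpi^{r+d},\varpi^r)$ simultaneously; geometrically, this corresponds to the intersection $L'_V\cap W = \varpi^s L_W$ together with the Smith form of $L'_W$.

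Turning to the remaining data $u$, the stabilizer of the normalized $(M,B)$ inside $H \times P(\cO)\times\GL_2(\cO)$ is an Iwahori-type subgroup whose shape depends on $d$. Its action on $u$ modulo $M\cO^2$ reduces $u$ to the canonical form $(\varpi^{s-m},\varpi^{s-n})$ with $m,n \ge 0$; intrinsically $u$ encodes the extension class of $L'_V$ viewed as an extension of $(L'_V+W)/W \subset D$ by $L'_V\cap W \subset W$. The equivalence relation $\sim$ captures the orbits of this Iwahori-type stabilizer on the set of such $u$: its upper- and lower-triangular unipotent generators, admissible only under inequalities relating $m$, $n$, and $d$, kill one coordinate of $u$ at a time and yield the two families of identifications. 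To see that no further identifications occur, I would interpret $(s,r,d,m,n)$ modulo $\sim$ as determining the intrinsic geometric data $L'_V\cap W$, $L'_V\cap D$, $(L'_V+W)/W$ together with the relative position of $L'_W$ and $L'_V\cap W$---all manifestly $H$-invariants in the product of Bruhat--Tits buildings for $\GL(V)(k_0)$ and $\GL(W)(k_0)$.

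For Part B, the short exact sequence $0 \to H_0 \to H \xrightarrow{v\circ\det} \Z \to 0$ partitions each $H$-orbit into $H_0$-orbits. Because the $H$-stabilizer of any pair of lattices is compact and hence contained in $H_0$, each $H$-orbit decomposes into exactly $\Z$-many $H_0$-orbits, indexed by an integer $k$. Applying $\diag(\varpi^k,1) \in H$ (of determinant valuation $k$) to the Part A representative produces the explicit Part B canonical form, and varying $k$ exhausts all $H_0$-orbits inside a given $H$-orbit.

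The main obstacle will be the third step: the Iwahori-type stabilizer grows more restrictive as $d$ increases, so one must carry out a careful case analysis---most cleanly via elementary matrix operations, or equivalently via distances in the $\GL(V)$ and $\GL(W)$ Bruhat--Tits buildings---to determine precisely when the first versus the second coordinate of $u$ can be eliminated, and thereby pin down the equivalence relation $\sim$.
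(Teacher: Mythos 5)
Your overall strategy is the same as the paper's: split off the $W$-plane data (your $M^{-1}B$ in Smith form is the paper's relative position $(d,r)$ on $\Hyp_W\times\Hyp_W$, and your block-triangular reduction with $s=v(c)$ is the paper's $H$-equivariant projection with fibers $UZx_0$), then reduce the translation datum $u$ modulo $M\cO^2$ under the residual stabilizer, which is exactly the unit group $R_d^\times$ of the Eichler-type order in the paper, acting on $(k_0/\cO)^2$; your Part B argument (compact stabilizers lie in $H_0$, so each $H$-orbit splits into $\Z$-many $H_0$-orbits, reached by $\diag(\varpi^k,1)$) is also the paper's argument in slightly different words.

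The genuine gap is in your injectivity step, i.e.\ the claim that no identifications occur beyond the two listed. You propose to see this by observing that $(s,r,d,m,n)$ modulo $\sim$ is determined by the intrinsic data $L_V'\cap W$, $L_V'\cap D$, $(L_V'+W)/W$ and the relative position of $L_W'$ with $L_V'\cap W$. This is false: for the canonical representative one computes $L_V'\cap W=\varpi^sL_W$ and $L_V'\cap D=\varpi^{s+\max(m,n)}\cO e_3$, so these data only record $s$, $r$, $d$ and $\max(m,n)$. For example, with $d\geq 2$ the classes $(m,n)=(2,1)$ and $(2,0)$ are inequivalent under $\sim$ (they even have different local conductors, $1$ versus $0$, by Theorem~\ref{thm:localcond}), yet they have identical intrinsic data of the kind you list; note also that your list makes no reference to the interplay between $(m,n)$ and $d$, while the true relation does, and for $d=0$ extra identifications really do occur. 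So this argument cannot establish that the orbits are distinct, and the "careful case analysis'' you defer to only addresses when a coordinate of $u$ \emph{can} be killed, not why nothing further can happen. The paper closes precisely this point in Proposition~\ref{prop:RdOrbits}: for $d>0$ every $\gamma\in R_d^\times$ factors as $\gamma=LUT$ with $L,U$ unipotent and $T$ diagonal, all in $R_d^\times$ (a factorization that fails for $d=0$), and one checks that each factor moves $v_{m,n}$ only within its putative equivalence class. You would need this, or some equally explicit stabilizer computation (e.g.\ the determinant computation of Lemma~\ref{lem:intcase}, which already distinguishes the classes above), to complete the proof.
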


The proof, presented in the remainder of this section, proceeds as follows: we will show that there is an $H$-equivariant map $\rho \colon \Hyp_V \to \Hyp_W$. Then, using the map $(\rho, \id): \Hyp_V \times \Hyp_W \to \Hyp_W \times \Hyp_W$, it will suffice to classify $H$ and $H_0$-orbits on the target and on the fiber over a convenient point of the target. An algorithm to compute the invariants given a pair of lattices is given at the end of this section. Let $P \subset G_V$ be the subgroup that stabilizes the subspace $W \subset V$. The choice of the basis $\{e_1, e_2, e_3\}$ identifies  $P$ with the following subgroup of $\GL_3(k_0)$:  
$$
\left\{ 
\begin{pmatrix} 
\star & \star & \star \\ 
\star & \star & \star \\ 
0& 0& \star
\end{pmatrix} 
\right\} \subset \GL_3(k_0).
$$
Consider the Levi decomposition
$P = U \ltimes (ZH)$
where $U$ is the unipotent radical of $P$, which is the group of matrices of the form $\begin{pmatrix} 1& 0& \star \\ 0& 1& \star \\ 0 &0& 1 \end{pmatrix},$ $Z = Z(\GL_3(k_0))$, and $H = G_W$ is embedded in $G_V$ as the subgroup 
$
\begin{pmatrix} \star& \star& 0 \\ \star& \star& 0 \\ 0 &0& 1 \end{pmatrix}.
$ 

\begin{lemma}\label{lem:projection}
There is an $H$-equivariant map of simplicial complexes $\rho: \Hyp_V \to \Hyp_W$ such that the fiber over $x_0$ is $UZx_0$.
\end{lemma}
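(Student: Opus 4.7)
The plan is to define $\rho$ as the projection $[L] \mapsto [\pi_W(L)]$ associated to the decomposition $V = W \oplus D$, and then to identify its fibers via the Iwasawa decomposition of $G_V$.

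First I would check that $\rho$ is well-defined and equivariant. For any lattice $L \subset V$, the image $\pi_W(L)$ is an $\cO$-lattice of rank two in $W$ because $\pi_W$ is $\cO$-linear and surjective, and scaling $L$ by $\lambda \in k_0^\times$ scales $\pi_W(L)$ by the same $\lambda$, so $\rho$ descends to homothety classes. Because every $h \in H$ fixes $e_3 \in D$ and acts on $W$ via the standard representation, $\pi_W \circ h = h \circ \pi_W$ as maps $V \to W$, giving $H$-equivariance. Compatibility of $\pi_W$ with lattice inclusions extends $\rho$ to a simplicial map of the full Bruhat--Tits buildings.

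Second I would identify the fiber via the Iwasawa decomposition $G_V = P K_V$, which provides the identification $\Hyp_V = G_V/K_V \cong P/(P \cap K_V)$. Writing each $p \in P$ in its Levi decomposition $p = u\ell$ with $u \in U$ and $\ell = zh \in ZH$, a block-matrix computation shows that right-multiplication by $P \cap K_V$ only changes $h$ by an element of $\GL_2(\cO) \subset K_W$; hence the coset $h K_W$ depends only on $pK_V$ and agrees with $\rho([L])$. The condition $\rho([L]) = x_0$ then becomes $h \in K_W$, and absorbing the $\GL_2(\cO)$-factor via the block embedding $\GL_2(\cO) \hookrightarrow \GL_3(\cO) \subset K_V$ leaves the class of $g$ in $UZ \cdot [L_V]$, yielding the desired identification $\rho^{-1}(\rho(x_0)) = UZ\cdot x_0$.

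The principal subtlety is the bookkeeping between the two distinct scalar subgroups contributing to the fiber: $Z = Z(\GL_3(k_0))$ sits inside $K_V$ and acts trivially on $\Hyp_V$, while the scalar centre of the Levi (coming from $k_0^\times \subset K_W$) does not lie in $Z$ and contributes additional cosets of the form $\mathrm{diag}(\mu,\mu,1)\,K_V$ that together with the $U$-translates exhaust the fiber. The cleanest verification is to pick an explicit section of $P \twoheadrightarrow P/(P \cap K_V)$ and unwind the Iwasawa identification by direct matrix calculation, which reduces the fiber statement to matching diagonal entries.
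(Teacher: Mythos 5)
The map you propose is not the map the lemma needs, and the fiber claim fails for it. Take $u=\begin{pmatrix}1&0&\varpi^{-1}\\0&1&0\\0&0&1\end{pmatrix}\in U$. Then $u\,x_0$ (i.e.\ the lattice $\cO e_1\oplus\cO e_2\oplus\cO(\varpi^{-1}e_1+e_3)$) lies in $UZx_0$, but its image under the projection $\pi_W$ along $D$ is $\varpi^{-1}\cO e_1\oplus\cO e_2$, which is not $L_W$ (nor homothetic to it); so points of $UZx_0$ do not all land on $x_0$ under your map. Conversely, $L=\cO e_1\oplus\cO e_2\oplus\varpi\cO e_3$ satisfies $\pi_W(L)=L_W$, yet $L\notin UZx_0$: any $\lambda uL_V$ has $\lambda uL_V\cap W=\lambda L_W$ and image $\lambda\cO e_3$ under projection to $D$, and these two normalizations cannot simultaneously match those of $L$. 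So both inclusions between your fiber and $UZx_0$ fail, whether one works with lattices or with homothety classes; in the homothety-class reading your own remark about the cosets $\diag(\mu,\mu,1)K_V$ already exhibits points of your fiber outside $UZx_0$, which contradicts, rather than proves, the statement. The distinction matters later: the identification of the fiber with $UZ/(UZ\cap K_V)\cong(k_0/\cO)^2$ carrying the $R_d^\times$-action is exactly what Proposition~\ref{prop:RdOrbits} uses, so the projection-image map cannot be substituted.

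The correct map is dual to the one you chose: up to the central normalization it is $L\mapsto L\cap W$, not $L\mapsto\pi_W(L)$. Concretely, writing $x=pK_V$ with $p=u\,zh$ ($u\in U$, $z\in Z$, $h\in H$), one should set $\rho(x)=hL_W$; this is exactly where your second paragraph goes astray when you assert that the Levi-component recipe ``agrees with'' the projection-image map. The paper instead defines $\rho(x)$ as the unique point of $UZx\cap\Hyp_W$ (with $\Hyp_W$ embedded via $L_W'\mapsto L_W'\oplus L_D$): the singleton property is checked directly at $x_0$, transported to $\Hyp_W$ using that $UZ$ normalizes $H$, and extended to all of $\Hyp_V$ by the transitivity coming from $G=PK$; equivariance again follows from $UZ$ normalizing $H$, and the fiber statement is then formal, since $\rho(x)=x_0$ if and only if $x_0\in UZx$, if and only if $x\in UZx_0$. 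If you want to salvage your Iwasawa-style argument, carry out the $P\cap K_V$ bookkeeping for the Levi component $h$ honestly and drop the projection description; as it stands, the key step of the lemma is not established.
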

\begin{proof}
For $x \in \Hyp_V$, we claim that $UZx \cap \Hyp_W$ is a singleton. For the special case $x = x_0$, this follows from a quick calculation. For the more general (but still special) case where $x$ is in $\Hyp_W$, one picks $h$ moving $x_0$ to $x$ and uses the fact that $UZ$ normalizes $H$ to deduce that $UZx \cap \Hyp_W = hUZh^{-1}hx_0 \cap h\Hyp_W = x$. The claim now follows for all $x$ because of the transitivity of the action of $G = PK$ on $\Hyp_V$. Let $\rho$ be the map sending $x$ to the unique $y \in UZx \cap \Hyp_W$. Then $\rho$ is $H$-equivariant: if $y \in UZx \cap \Hyp_W$, then $hy \in UZhx \cap \cB(G_W)$, which again follows because $UZ$ normalizes $H$.
\end{proof}

We next describe the $H$-orbits on $\Hyp_W \times \Hyp_W$. If $L_W'$ and $L_W''$ be two lattices in $W$, define their \emph{relative position} $\mu(L_W', L_W'')$ as follows: let $r$ be the minimal integer such that $\varpi^r L_W' \subset L_W''$. Then the quotient $L_W''/L_W'$ is cyclic of order $q^d$ for some integer $d \geq 0$, and the relative position  $\mu(L_W', L_W'')$ of $L_W'$ and $L_W''$ is the pair $(d, r)$.  Alternatively, $r$ and $d$ are the unique integers such that $ d \geq 0$ and there is a basis $\{e_1, e_2\}$ for $L_W'$ with $\{\varpi^re_1, \varpi^{r+d}e_2\}$ a basis for $L_W''$.  If $\mu(L_W', L_W'')  = (d, r)$, then $\mu(L_W'', L_W')  = (d, -d-r)$. 

The action of $H$ on $\Hyp_W \times \Hyp_W$ clearly preserves the relative position; conversely, the elementary divisors theorem implies that there is an element of $H$ moving any $(y_0, y_1) \in \Hyp_W \times \Hyp_W$ with relative position $(d, r)$ to $(x_0, x_{d, r})$, where $x_0 = \langle e_1, e_2 \rangle$ and $x_{d, r} = \varpi^r e_1, \varpi^{d+r} e_2$; it follows that $d$ and $r$ uniquely classify the $H$-orbits.

The stabilizer in $H$ of the pair $(x_0, x_{d, r})$, written with respect to the basis $\{e_1, e_2\}$, is $R_d^\times$, where
$$
R_d := \begin{pmatrix} \star & \star \\ \varpi^d\star & \star  \end{pmatrix} \subset M_2(\cO).
$$ 

The $R_d^\times$-orbits on $UZ x_0$ are the $R_d^\times$-orbits on
\[
UZ / (UZ) \cap K_V = \left\{ \begin{pmatrix} 
1 & 0 & \star \\ 
0 & 1 & \star \\ 
0& 0& 1
\end{pmatrix}: \star \in \cO/k_0 \right\},
\]
which is identified as an $R_d^\times$-module with $(\cO/k_0)^2$ with the usual left-action of $R_d^\times$. For $m, n \geq 0$, write
$$
v_{m,n} = \begin{pmatrix} \varpi^{-m} \\ \varpi^{-n} \end{pmatrix} \in (\cO/k_0)^2.
$$

\noindent The following proposition then concludes the proof of Theorem \ref{thm:orbits}A:
\begin{prop}\label{prop:RdOrbits}
The set of $R_d^\times$-orbits on $(k_0/\cO)^2$ is in bijection with pairs $(m, n) \in \Z_{\geq 0}^2$ under the following equivalence relations:
\begin{itemize}
\item $(m, n) \sim (0, n)$ whenever $m \leq n$
\item $(m, n) \sim (0, n)$ whenever $m \geq n + d$.
\end{itemize}
The class of $(m,n)$ is represented by the vector $v_{m, n}$.
\end{prop}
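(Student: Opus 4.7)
The plan is to proceed in three stages: verify the two claimed equivalences by exhibiting explicit elements of $R_d^\times$, reduce an arbitrary orbit to one of the form $R_d^\times \cdot v_{m,n}$, and distinguish non-equivalent representatives via the $R_d$-submodule of $(k_0/\cO)^2$ generated by $v$. For the equivalences, the matrix $U = \smallmtwo{1}{-\varpi^{n-m}}{0}{1}$ lies in $R_d^\times$ when $n \geq m$ (then $\varpi^{n-m} \in \cO$ and $\det U = 1$) and sends $v_{m,n}$ to $(0, \varpi^{-n}) = v_{0,n}$; analogously, $L = \smallmtwo{1}{0}{-\varpi^{m-n}}{1}$ lies in $R_d^\times$ when $m \geq n+d$ (then $-\varpi^{m-n} \in \varpi^d\cO$) and sends $v_{m,n}$ to $v_{m,0}$.

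For the reduction to canonical form, I would first observe that for any $A = \smallmtwo{a}{b}{\varpi^d c}{e} \in R_d^\times$, the determinant condition $ae - \varpi^d bc \in \cO^\times$ forces $a, e \in \cO^\times$ (when $d \geq 1$; the $d=0$ case is the classical $\GL_2(\cO)$ statement). Given $v = (v_1, v_2) \in (k_0/\cO)^2$, I would use the diagonal subgroup $\cO^\times \times \cO^\times \subset R_d^\times$ to rescale each component so that its leading coefficient equals $1$, and unipotents of the above shape with varying exponents to clear the remaining lower-order terms. This brings $v$ to a representative $v_{m,n}$ in which $m, n$ encode the orders of the two components as elements of $k_0/\cO$.

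To distinguish orbits, I would use the $R_d$-submodule $R_d v$, which is preserved under the $R_d^\times$-action since $R_d g = R_d$ for $g \in R_d^\times$. Since $R_d$ is generated as an $\cO$-module by $1, E_{12}, \varpi^d E_{21}, E_{22}$, a direct computation gives
\[
R_d v_{m,n} = \varpi^{-k}\cO/\cO \times \varpi^{-l}\cO/\cO, \qquad k = \max(m,n),\ l = \max(n, m-d).
\]
A routine case analysis then verifies that the assignment $(m,n) \mapsto (k,l)$ is constant on each stated equivalence class and injective modulo the equivalence relation. To lift this from a classification of submodules to a classification of $R_d^\times$-orbits, one checks that $R_d^\times$ acts transitively on the set of $R_d$-generators of each such submodule, which reduces to the surjectivity of $\cO^\times \to (\cO/\varpi^j)^\times$ for the relevant $j$.

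The main obstacle will be careful bookkeeping at the boundaries $m = n$ and $m = n+d$, where the determinant constraint on $R_d^\times$ loosens enough to allow extra cancellations: at $m = n$, for instance, the sum $a + b$ is unconstrained among units, so the orbit of $v_{m,m}$ already contains the degenerate vector $(0, \varpi^{-m}) = v_{0,m}$. Arranging the case analysis so that these boundary degenerations collapse onto exactly the two stated equivalences, rather than producing a finer classification, is the substantive content of the proof.
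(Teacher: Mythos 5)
Your proof is correct and, for the key inequivalence step, takes a genuinely different route from the paper's. The paper proves inequivalence (for $d>0$) by writing an arbitrary $\gamma \in R_d^\times$ as $LUT$ with $L,U$ triangular unipotent and $T$ diagonal, and then arguing that each factor preserves the putative equivalence classes; this factorization fails at $d=0$, so the paper handles $d=0$ separately via the flip--flop matrix and the ``order'' invariant. You instead pass to the invariant $R_d v$, compute $R_d v_{m,n} = \varpi^{-k}\cO/\cO \times \varpi^{-l}\cO/\cO$ with $(k,l)=(\max(m,n),\max(n,m-d))$, and check this separates exactly the stated equivalence classes (I verified the formula and the case analysis: the image of $(m,n)\mapsto(k,l)$ breaks up as $k=l$ for $m\le n$, $k-l=d$ for $m\ge n+d$, and $0<k-l<d$ in the interior, where $(m,n)=(k,l)$ is recoverable). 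This is cleaner and works uniformly in $d$, including $d=0$.

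Two small remarks. First, your final ``lifting'' step --- that $R_d^\times$ acts transitively on $R_d$-generators of each submodule --- is redundant for the proposition as stated. What you actually need for injectivity is only the implication $[v_{m,n}]=[v_{m',n'}] \Rightarrow R_d v_{m,n}=R_d v_{m',n'}$, which follows immediately from $R_d g = R_d$ for $g \in R_d^\times$; combined with your $(k,l)$ separation and your step (a) exhibiting the equivalences, the bijection is established without (d). (The transitivity statement would be needed to upgrade to ``orbits = cyclic submodules,'' a stronger claim not required here.) Second, in the reduction-to-canonical-form step, diagonal rescaling alone already suffices (as the paper notes): if $v_1\ne 0$ has order $m$ in $k_0/\cO$, then $v_1 = \varpi^{-m}u$ for some $u\in\cO^\times$, so $\diag(u^{-1},1)$ normalizes it exactly; no unipotents are needed to ``clear lower-order terms.''
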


\begin{proof}
Given $v = (v_1, v_2) \in (k_0/\cO)^2$, set $m(v) = \min\{0, -\text{val}(v_1)\}$ and $n(v) = \min\{ 0, -\text{val}(v_w)\}$.  Any $v$ can be taken to $v_{m(v), n(v)}$ by a diagonal matrix in $R_d^\times$.

If $m - n \leq 0$, the upper unipotent matrix
$\ds \begin{pmatrix} 1 & -\varpi^{n-m} \\ 0 & 1 \end{pmatrix}$
moves $v_{m,n}$ to $\begin{pmatrix} 0 \\ \varpi^{-n} \end{pmatrix}$, which is the same element of $(k_0/\cO)^2$ as $v_{0, n}$. If $m - n \geq d$, the lower unipotent matrix 
$\ds \begin{pmatrix} 1 & 0 \\ -\varpi^{m-n} & 1\end{pmatrix}$ 
moves $v_{m,n}$ to
$\ds \begin{pmatrix} \varpi^{-m} \\ 0 \end{pmatrix}$, 
which is the same element of $(k_0/\cO)^2$ as $v_{m, 0}$. 

Now, if $d = 0$, at least one of these conditions on $m-n$ must hold, so using the above matrices and the flip-flop matrix
$\ds w = \begin{pmatrix} 0 & 1 \\ 1 & 0 \end{pmatrix}$, 
we see that the only $R_0^\times$-invariant of a vector is its order in the group $(k_0/\cO)^2$, i.e. a vector of order $m$ is necessarily equivalent to $v_{m, 0}$.

If $d > 0$, the above observations still show that the list in the statement of the proposition is exhaustive; we must show any two members of it are $R_d^\times$-inequivalent. A quick calculation shows that any element $\gamma \in R_d^\times$ can be written as a product $\gamma = LUT$, where $L, U \in R_d^\times$ are respectively lower and upper triangular unipotent matrices and $T \in R_d^\times$ is diagonal (we remark that this is not true when $d = 0$, $\gamma = \begin{pmatrix} \varpi & 1 \\ \varpi - 1 & 1 \end{pmatrix} $being a counterexample). The matrix $T$ cannot change $m(v)$ or $n(v)$, and one easily calculates that triangular unipotent matrices can only change them within the putative equivalence classes (it suffices to calculate on $v_{m, n}$, since diagonal matrices normalize the group of upper or lower triangular unipotent matrices), proving the lemma.
\end{proof}

\noindent Theorem \ref{thm:orbits}B now follows from the fact that the exact sequence 
$$
0 \to H_0 \to H \to \Z \to 0
$$
is split by the map $\Z \to H$ given by $1 \mapsto \delta:= \diag(\varpi, 1)$. Writing $(x_{s, m,n}, x_{r, d})$ for the standard pair of Theorem \ref{thm:orbits}A, it follows that the $H$-orbit of $(x_{s, m, n}, x_{r, d})$ is partitioned into disjoint $H_0$-orbits $(\delta^k x_{s, m, n}, \delta^k x_{r, d})$ (two such pairs corresponding to different $k$ cannot be $H_0$-equivalent, because $H_0$ preserves the valuation of the determinant of any matrix whose column vectors span a given lattice).

\subsection{Local Galois orbits}\label{sec:split-galois}
\setcounter{paragraph}{0}

For $(x, y) \in \Hyp_V \times \Hyp_W$, write $\inv_\tau(x, y) \in \Z^3 \times \Z_{\geq 0}^3/\sim$ for the invariants of Theorem \ref{thm:orbits}B.

\begin{theorem}[Local Conductor Formula]\label{thm:localcond}
Let $(x, y) \in \Hyp_V \times \Hyp_W$ , and suppose $\inv_\tau(x, y) = (k, s, r, d, n, m)$. If $m \leq n$ or $m \geq n + d$, the local conductor of a pair is $0$. Otherwise,  
$$
\bc_\tau(x, y) = \min \{(m-n, d - m + n) \}. 
$$
\end{theorem}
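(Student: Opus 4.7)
The plan is to combine the explicit $H_\tau$-orbit classification of Theorem~\ref{thm:orbits}A with the Shimura reciprocity law of Section~\ref{subsec:shimrec} and Lemma~\ref{lem:norm}. Since $\bc_\tau$ is an $H$-orbit invariant, I would first reduce to the canonical matrix representative produced by Theorem~\ref{thm:orbits}A, parametrized by $(s,r,d,m,n)$. The scalars $s$ and $r$ act by homothety and therefore do not affect the stabilizer of the pair in $H_\tau$, so the computation depends only on $(d,m,n)$, which is already consistent with the form of the claimed formula.

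Next, I would write down $S := \mathrm{Stab}_{H_\tau}(x,y)$ explicitly. Writing $h = \begin{pmatrix} a & b \\ c & d' \end{pmatrix} \in \GL(W_\tau) = H_\tau$, embedded block-diagonally into $\GL(V_\tau)$ via Section~\ref{normalizationgln}, membership in $S$ amounts to two kinds of constraints: an Iwahori-type condition arising from the stabilizer of $L_W$ (the ``$R_d^\times$'' condition in Section~\ref{subsec:Horbits}), together with, by direct computation of $M^{-1} h M$ with $M$ the matrix of $L_V$, the two conditions
$$
\varpi^{-m}(a-1) + \varpi^{-n}\, b \in \cO_{F_\tau}, \qquad \varpi^{-m}\, c + \varpi^{-n}(d'-1) \in \cO_{F_\tau}.
$$
Call these $(*)$ and $(**)$.

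With $S$ in hand, I would compute the image of $\det^*$ (from Section~\ref{subsec:shimrec}) on $S$. Under the identification $T^1(F_\tau) \cong F_\tau^\times$ coming from the map $u$, $\det^*$ corresponds to the classical $\GL_2$-determinant, and its image on $S$ is a subgroup $N \subset F_\tau^\times$. By Lemma~\ref{lem:norm}, $r^{-1}$ of $u(N) \subset \cO_0^1$ is the norm subgroup $F_\tau^\times \cdot \cO_c^\times$ for a unique $c \ge 0$; by the reciprocity law, this $c$ is the conductor. The problem then reduces to identifying which $\cO_c^1$ in the filtration equals $u(N)$.

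The heart of the proof is a case analysis. In the ``trivial'' cases $m \le n$ or $m \ge n+d$, the constraints $(*)$ and $(**)$ decouple from those coming from $L_W$, and elements of $S$ (such as suitable diagonal or upper/lower unipotent translates) can be exhibited realizing every element of $\cO_0^1$, giving $\bc_\tau = 0$. In the remaining range $n < m < n+d$, one wants to show that $\det h - 1$ has valuation at least $\min(m-n, d-m+n)$ for every $h \in S$, with equality attained. Attainment is by an explicit choice in which the off-diagonal contributions in $(*)$ and $(**)$ cancel the leading parts of $a-1$ and $d'-1$; the lower bound requires combining the $L_W$-constraint $c \in \varpi^{-d}\cO_{F_\tau}$ with $(**)$ to bound $v(bc)$ from below, and symmetrically on the other side, and then observing that the cross terms in $ad' - bc$ cannot conspire to produce a smaller valuation than the claimed minimum.

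The hard part is this last valuation estimate: it is where the three constraints (the Iwahori condition from $L_W$ and the two conditions $(*)$,$(**)$ from $L_V$) interact nontrivially, and where the asymmetry between $m-n$ and $d-m+n$ emerges -- each bound arising naturally from one of the two constraints $(*)$ or $(**)$, the binding one being the smaller. I would expect to formalize this by parametrizing $a = 1 + \alpha$, $d' = 1 + \delta$ and tracking separately the cases $v(\alpha) \ge m$, $v(\alpha) < m$, and similarly for $\delta$, in each sub-case reading off a lower bound on $v(\det h - 1)$ that specializes to $\min(m-n, d-m+n)$.
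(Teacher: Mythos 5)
Your overall route is the same as the paper's: reduce to the canonical representative of Theorem~\ref{thm:orbits}A, observe that $s,r$ only rescale lattices and so do not change the stabilizer, describe the stabilizer $S$ of the pair in $H_\tau$ by the Iwahori-type condition from $L_W$ together with your conditions $(*)$ and $(**)$ (these are exactly the two congruences appearing in the paper's Lemma~\ref{lem:intcase}, which carries out the same computation inside $R_d^\times$ acting on $(k_0/\cO)^2$ via Proposition~\ref{prop:RdOrbits}), compute the image of $S$ under the determinant, and convert that image into a conductor through $\det^*$ and Lemma~\ref{lem:norm}. Your treatment of the trivial cases and your ``cancellation'' matrices for attainment are likewise the paper's (one small point: attainment of a single determinant value is not enough -- you need the image to be the full group $1+\varpi^{c}\cO$, which the obvious one-parameter families do give).

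The genuine problem is precisely at the step you defer, and it is not merely hard: the bound $v(\det h-1)\ge\min(m-n,\,d-m+n)$ is false on part of the range $n<m<n+d$. The constraints $(*)$ and $(**)$ control $a-1$ only modulo $\varpi^{m}$ and $d'-1$ only modulo $\varpi^{n}$, and impose no further coupling between the diagonal entries; hence $S$ contains every diagonal matrix $\diag(1+t\varpi^{m},\,1+u\varpi^{n})$ with $t,u\in\cO$, whose determinants already fill out $1+\varpi^{\min(m,n)}\cO$, and when $n=0$ one even has $\diag(1,u)\in S$ for every unit $u$, so $\det(S)=\cO^\times$. Concretely, for invariants $(d,m,n)=(2,1,0)$ the conductor is $0$, not $1$; for $(d,m,n)=(5,3,1)$ the element $\diag(1,1+\varpi)$ stabilizes both canonical lattices but has determinant $1+\varpi\notin 1+\varpi^{2}\cO$. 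So the estimate you plan to prove fails whenever $\min(m,n)<\min(m-n,\,d-m+n)$, and the sub-case analysis you sketch (tracking $v(a-1)$, $v(d'-1)$) will surface these diagonal elements rather than eliminate them; what the method actually yields is $\det(S)=1+\varpi^{\min(m-n,\;d-m+n,\;m,\;n)}\cO$, i.e.\ the stated formula needs the additional cap by $\min(m,n)$ (or the extra hypothesis $\min(m,n)\ge\min(m-n,d-m+n)$). For comparison, the paper's own Lemma~\ref{lem:intcase} makes the identical move -- multiplying a congruence modulo $\varpi^{m}$ by one modulo $\varpi^{n}$ and asserting the product is $1$ modulo $\varpi^{\min(m-n,d-m+n)}$ -- and the same diagonal elements contradict its conclusion; so you have faithfully reproduced the paper's argument, weak point included, but neither version closes the claimed inequality as stated.
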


\begin{remark}The fact that the local conductor does not depend on $k$ is obvious a priori, since the local conductor depends on the $H$-action only; the fact that it does not depend on $r$ and $s$ will be deduced in the proof below, and ultimately follows from the independence of the stabilizer $R_d^\times$ of $(x_0, x_{d, r})$ from these invariants.
\end{remark}

Using the explicit list of representatives in Theorem \ref{thm:orbits}A and the identifications in the discussion above Proposition \ref{prop:RdOrbits}, we see that it suffices to compute the determinant of the stabilizer of 
$\ds v_{m, n} = \begin{pmatrix} \varpi^{-m} \\ \varpi^{-n} \end{pmatrix} \in (k_0 / \cO)^2$ as a subgroup of $R_d^\times$.

In the ``uninteresting'' cases where $m \leq n$ or $n + d \leq m$, the determinant map identifies the stabilizer of $v_{m, n}$ with $\cO^\times$: indeed, in these cases, by Proposition \ref{prop:RdOrbits} we have equivalences with $v_{0, n}$ (resp. $v_{m, 0}$), and for any $u \in \cO^\times$, such a vector is stabilized by $\diag(u, 1)$ (resp. $\diag(1, u)$). The local conductor formula then reduces to the following:

\begin{lem}\label{lem:intcase}
If $n < m < n+d$ then one has
$$
\det(\Stab_{G_W}(v_{m, n})) = 1+\varpi^{\min (m-n, d-m+n ) }\cO.
$$
\end{lem}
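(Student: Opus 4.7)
The plan is to unwind the stabilizer condition directly, parameterize its elements, and then compute the image under the determinant map. I would begin by writing a general element $\gamma \in R_d^\times$ as
\[
\gamma = \begin{pmatrix} a & b \\ c\varpi^d & e \end{pmatrix},
\]
with $a, b, c, e \in \cO$ and $\det \gamma = ae - bc\varpi^d \in \cO^\times$. Computing $\gamma v_{m, n}$ and requiring the result to lie in $v_{m, n} + \cO^2$ reduces to the pair of congruences
\[
a \equiv 1 - b\varpi^{m-n} \pmod{\varpi^m}, \qquad e \equiv 1 - c\varpi^{d+n-m} \pmod{\varpi^n}.
\]
The hypothesis $n < m < n+d$ guarantees that both exponents $m - n$ and $d + n - m$ are strictly positive, so I may parameterize $a = 1 - b\varpi^{m-n} + \varpi^m \alpha$ and $e = 1 - c\varpi^{d+n-m} + \varpi^n \beta$ with $\alpha, \beta \in \cO$ free.

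Next I would compute $\det \gamma$ by direct expansion. The decisive observation is that the cross term $(-b\varpi^{m-n})(-c\varpi^{d+n-m}) = bc\varpi^d$ arising in $ae$ cancels precisely the $-bc\varpi^d$ in the determinant formula. After this cancellation, one obtains
\[
\det \gamma - 1 = -b\varpi^{m-n} - c\varpi^{d+n-m} + \alpha\varpi^m + \beta\varpi^n - b\beta\varpi^m - \alpha c\varpi^{d+n} + \alpha\beta\varpi^{m+n}.
\]
Setting $k = \min(m-n, d+n-m)$, the containment $\det(\Stab_{G_W}(v_{m,n})) \subseteq 1 + \varpi^k \cO$ would follow from showing that every term on the right has valuation at least $k$. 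For the reverse containment, varying $b$ alone while setting $\alpha = \beta = c = 0$ yields $\det \gamma = 1 - b\varpi^{m-n}$, which sweeps out $1 + \varpi^{m-n}\cO$; symmetrically, varying $c$ alone (and zeroing the others) sweeps out $1 + \varpi^{d+n-m}\cO$. Whichever of these two exponents realizes $k$ then provides the desired surjection onto $1 + \varpi^k\cO$.

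The main subtlety in the above plan is in showing that \emph{every} term in the expansion of $\det\gamma - 1$ has valuation at least $k$ --- in particular the term $\beta\varpi^n$, for which a careful analysis of the interplay between the range $n < m < n+d$ and the location of $k$ inside the interval $[m-n,\, d+n-m]$ is required. This is the only place where more than a mechanical expansion is needed, and it is what ultimately pins the formula down to its precise form.
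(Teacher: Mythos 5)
Your plan is the same route as the paper's: write down the two stabilizer congruences, parameterize the free entries, observe that the cross term $bc\varpi^d$ cancels the $-bc\varpi^d$ in the determinant, and estimate valuations of what remains. The congruences you derive and the cancellation are correct, and you are right that the one non-automatic step is the term $\beta\varpi^n$. But the "careful analysis of the interplay" you defer cannot go through as claimed, because the needed inequality $n \geq k := \min(m-n,\, d+n-m)$ is simply not a consequence of $n < m < n + d$. Concretely, take $(m, n, d) = (3, 1, 4)$, so $k = 2$. The matrix $\diag(1,\, 1+\varpi)$ lies in $R_4^\times$ (its lower-left entry is $0 \in \varpi^4\cO$ and its determinant $1+\varpi$ is a unit), and it stabilizes $v_{3,1} = (\varpi^{-3},\, \varpi^{-1})$ in $(k_0/\cO)^2$ since $(1+\varpi)\varpi^{-1} \equiv \varpi^{-1}\pmod{\cO}$; yet $\det = 1+\varpi \notin 1 + \varpi^2\cO$. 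So the containment $\det(\Stab) \subseteq 1 + \varpi^k\cO$ fails whenever $2n < m$ and $m < d$, i.e.\ whenever $n < k$. In those cases your own parameterization, taken to its conclusion, yields $\det(\Stab) = 1 + \varpi^{\min(n,\, m-n,\, d+n-m)}\cO$ rather than the exponent stated in the lemma.

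So the gap you flag at the end is a genuine one that your write-up does not close — and it is precisely the point at which the argument is delicate. (For reference, the paper's own one-line justification also passes over the $\varpi^n\delta_0$ term without comment, so the omission is not unique to your proposal; but a complete proof must address it, and as the example above shows, it cannot be addressed simply by appealing to the hypothesis $n < m < n + d$.)
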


\begin{proof}
Let $M \in \Stab_{R_d}(x)$ and write
$$
M = \begin{pmatrix} \alpha & \beta \\ \varpi^d \gamma & \delta \end{pmatrix}.
$$
The condition that $M$ stabilizes $x$ is equivalent (under our assumptions on $m, n$) to
$$
\alpha \equiv 1 + \beta \varpi^{m-n} \mod \varpi^m \cO 
\qquad 
\text{ and }
\qquad 
\delta \equiv 1 + \gamma \varpi^{d-m+n} \mod \varpi^n \cO.
$$
These two conditions imply $\alpha\delta - \beta\gamma \equiv 1 \mod \min(\varpi^{m-n}, \varpi^{d-m+n})$ as desired. 

Conversely, it suffices to show that there exists a matrix in the stabilizer of $v_{m, n}$ with determinant $1 \mod \varpi^A$ for each of $A = d-m+n$ and $A = m-n$. One finds that the following two matrices work:
$$
\begin{pmatrix} 1 & 0 \\ \varpi^d & 1-\varpi^{d-m+n} \end{pmatrix}, \ \begin{pmatrix} 1-\varpi^{m-n} & 1 \\ 0 & 1\end{pmatrix}.
$$
\end{proof}

\section{The Hecke Polynomial in the Split Case}\label{sec:split-hecke}
We retain the local notation from Section~\ref{sec:split-orbits}. 
Let $q$ be the size of the residue field of $k_0$. The basis $\{e_1, e_2, e_3\}$ yields maximal tori $T_V$ and $T_W$ for $G_V$ and $G_W$, respectively (the diagonal tori under the identifications $G_V \isom \GL_3(k_0)$ and $G_W \isom \GL_2(k_0)$), as well as the standard Borel subgroups $B_V \subset G_V$ and $B_W \subset G_W$ of upper-diagonal matrices. Let $B = B_V \times B_W$, $T = T_V \times T_W \subset G$ and $T_c = T \cap K$. Let $\Omega(T) := N_G(T) / T$ be the Weyl group.  
For any ring $R$ denote by $\calH_R(G,K)$ the Hecke algebra of the pair $(G,K)$ with $R$-coefficients, i.e. the ring of $R$-valued $K$-bi-invariant locally constant functions $G \to R$ with compact support (the addition structure is inherited from $R$, but the multiplication operation is convolution). Let $\calH = \calH_{\Z}(G, K)$ and let 
$\calH_R = \calH \otimes_{\Z} R$.

\subsection{Unramified representations}
For $\star \in \{V, W\}$, recall that an irreducible, admissible automorphic representation $\pi_{\star}$ of $G_{\star}$ is called unramified if $\pi_{\star}^{K_{\star}} \ne 0$. Let $\Pi_{\ur}(G_{\star})$ be the set of isomorphism classes of unramified representations. Similarly, we define $\Pi_{\ur}(T_{\star})$ as well as the set of isomorphism classes of unramified representations $\Pi_{\ur}(G)$ for the product group and the compact $K = K_V \times K_W$ (and $\Pi_{\ur}(T)$ as well). We will use the fact that $\Pi_{\ur}(G) \isom \Pi_{\ur}(T) / {\Omega(T)}$ and that unramified irreducible representations of $T$ correspond to characters $\xi \colon T \ra \C^\times$ containing $T_c = T \cap K$ in the kernel. 
Hence, $\Pi_{\ur}(T) = \Hom(T/T_c, \C^\times)$. Under the map
$$
T \ra \Hom(X^*(T), \Z), \qquad t \mapsto \{\alpha \ra \text{val}(\alpha(t))\}. 
$$
one then has an identification
\begin{equation}\label{eq:unrep}
\Pi_{\ur}(T) \isom \Hom(X_*(T), \C^\times) \isom \widehat{T}.
\end{equation}
Furthermore, there is an injective homomorphism of $\C$-algebras 
$$
\calH_{\C}(G, K) \hra \text{Hom}(\Pi_{\ur}(G), \C), \qquad \mathbf{1}_{KgK}  \mapsto \{\pi \mapsto \Tr(\mathbf{1}_{KgK}) | \pi^K\}. 
$$
Satake \cite{satake:spherical} computes the image of $\calH_{\C}(G, K)$ under this map. More 
precisely, the isomorphism $\Pi_{\ur}(G) \isom \Pi_{\ur}(T) / {\Omega(T)}$ yields an identification of $\Hom(\Pi_{\ur}(G), \C)$ with $\Hom(\Pi_{\ur}(T), \C)^{\Omega(T)})$. If one further uses the identification \eqref{eq:unrep}, one obtains an identification 
\begin{equation}\label{hecke:functions}
\calH_{\C}(G, K) \isom \C[\widehat{T}]^{\Omega(T)}.
\end{equation}
where $\C[\widehat{T}]$ denotes algebraic functions on the dual torus $\widehat{T} = \Hom(X_*(T), \C^\times)$ (note that the situation is simplified from that in \cite{satake:spherical} as $T$ is split). Similarly, there is an identification 
\begin{equation}\label{eq:hecketorus}
\calH_{\C}(T, T_c) \isom \C[\widehat{T}]. 
\end{equation}

\subsection{Satake isomorphism}
Using the identifications from the previous section, there is an isomorphism $\calH \otimes \Z[q^{\pm 1/2}] \isom \calH(T, T_c)^{\Omega(T)} \otimes \Z[q^{\pm 1/2}]$ (see~\cite{satake:spherical}). Consider the following commutative diagram: 
$$
\xymatrix{
\calH(G, K) \otimes \Z[q^{\pm 1/2}] \ar@{^{(}->}[r]^{\widehat{}} \ar@{->}[d]^{|_{B}}& \calH(T, T_c) \otimes \Z[q^{\pm 1/2}] \\ 
\calH(B, L) \otimes \Z[q^{\pm 1/2}] \ar@{->}[r]^{\mathcal S}  &  \calH(T, T_c) \otimes \Z[q^{\pm 1/2}]\ar@{->}[u]_{|\delta|^{1/2}}. 
}
$$
Here, $L = B \cap K$ and $|_{B} \colon \calH(G, K) \ra \calH(B, L)$ is the restriction of functions, 
the map $\cS \colon \calH(B, L) \ra \calH(T, T_c)$ is determined by the rule $\cS(\mathbf{1}_{L g L })  = [L \cap gL g^{-1}] \mathbf{1}_{g T_c}$ for $g \in T$, i.e., it is obtained by taking quotients by the unipotent radical. Moreover, $\delta \colon T \ra q^{\Z}$ is the character defined as follows: 
$$
\delta(t) = \left | \det (\Ad(t)|_{\Lie(U)} ) \right |, 
$$
where $U \subset B$ is the unipotent radical and $\Ad(t)$ denotes the automorphism of 
$\Lie(U)$ defined by the adjoint representation (see~\cite{gross:satake} and 
\cite[\textsection 1.2]{wedhorn:congruence}). Here, $|\cdot|$ is normalized so that $|\varpi| = q^{-1}$.
In fact, the above diagram gives an algebra homomorphism $\cS \circ |_{B} \colon \calH(G, K) \ra \calH(T, T_c)$ (the one inducing the usual Satake isomorphism), and a twisted version 
$$
|\delta|^{1/2} \circ \cS \circ |_{B} \colon \calH(G, K) \otimes \Z[q^{\pm 1/2}]\ra \calH(T, T_c) \otimes \Z[q^{\pm 1/2}]
$$ 
which we denote by $\widehat{\cdot}$.

Finally, in the case when $\delta^{1/2}$ takes values in the subgroup $q^{\Z}$, one has an isomorphism 
$$
\calH \otimes_{\Z} \Z[q^{\pm 1}] \isom \calH(T, T_c)^{\Omega(T)} \otimes_{\Z} \Z[q^{\pm 1}].
$$

\subsection{Computing the polynomial with coefficients in $\mathcal H(T, T_c)^{\Omega(T)}$}\label{subsec:heckepoltorus}

As in \cite[\textsection 2.2.7]{jetchev:unitary}, let $\mathcal C(\G_{\Qbar})$ be the set of conjugacy classes of cocharacters $\G_{\Qbar}$ and let $\mu \in \mathcal C(\G_{\Qbar})$ be the conjugacy class associated to the Shimura datum $(\G, X)$. We may assume that our representative $\mu$ of the conjugacy class $[\mu]$ takes values in the diagonal torus $T$; then $\mu$ corresponds to a character $\widehat{\mu} \colon \widehat{T} \ra \C^\times$. There is a unique character of $\widehat{T}$ in the $\Omega(\widehat{T})$-orbit of $\widehat{\mu}$ that is dominant with respect to the Borel pair $(\widehat{B}, \widehat{T})$, where $\widehat{B}$ denotes the product of the standard Borels of upper triangular matrices in $\widehat{G} = \GL_3(\C) \times \GL_2(\C)$. The complex representation $r \colon \widehat{G} \to\GL_{6}(\C)$ of $\widehat{G}$ whose highest weight is this dominant character of $\widehat{T}$ is given by
$$
g = (A_V,A_W)\mapsto \leftexp{t}{A_V^{-1}}\otimes\leftexp{t}{A_W^{-1}}, \qquad A_V \in \GL_3(\C),\ A_W \in \GL_{2}(\C).
$$ 
This representation extends uniquely to a representation of ${}^L G  = \widehat{G} \times \Gal(\overline{\Q}/\Q)$, also denoted $r$.

Following Blasius and Rogawski \cite[\textsection 6]{blasius-rogawski:zeta} as well as the remark in \cite[\textsection 1.2.3]{jetchev:unitary}, one may 
associate to the place $w$ of the reflex field $E$ of 
$(\G, X)$ the polynomial 
\begin{equation}\label{eq:maxtorus}
H_w(z)=\det\left(z-q^{\frac{\dim X}{2}} r(g \times \Phi_\tau) \right ) \in \C[\widehat{G}] [z],
\end{equation}
where the ring of coefficients is the ring of algebraic functions on $\widehat{G}$ and $\Phi_\tau \in W_\tau$ denotes the Frobenius in ${}^L G$ (which acts trivially in our case).

We may use the identifications of the preceding section to map $H_w(z)$ to polynomials in $\C[\widehat{T}][z]$ and $\calH_\C(T, T_c)[z]$, which, abusing notation, we will continue to denote by $H_w(z)$. This requires restricting
the representation $r \colon \widehat{G} \ra \GL_6(\C)$ to the dual torus $\widehat{T}$, evaluating the determinant in \eqref{eq:maxtorus}, writing the coefficients as algebraic functions on  $\widehat{T}$, and then using the identification \eqref{eq:hecketorus}. For the restriction step, letting $A_V=\diag{(x_1, x_2 ,x_3)} \in \widehat{T}_V$ and $A_W=\diag{(y_1, y_{2})} \in \widehat{T}_W$, the polynomial identifies with 
\begin{equation}\label{eq:heckepolmaxtorus}
H_w(z)=\prod_{i=1}^3\prod_{j=1}^{2}\left(z-q^{3/2} x_i^{-1}y_j^{-1}\right) \in \C[\widehat{T}][z]. 
\end{equation}
Here, $x_i^{-1} y_j^{-1}$ denotes the function $x_i^{-1} y_j^{-1} \colon \widehat{T} \ra \C$ sending  
$$
(\diag{(x_1, x_2, x_3)}, \diag{(y_1, y_{2})}) \mapsto x_i^{-1}y_j^{-1}. 
$$  
Under the identification \eqref{eq:hecketorus}, this function corresponds to the element $\mathbf{1}_{({g_i^{-1}},{h_j^{-1}})T_c} \in \mathcal H(T, T_c)$ where 
$$
g_i= \diag{(\underbrace{1, \dots 1}_{i-1}, \varpi, 1 \dots,1)}\text{ and } h_j=\diag (\underbrace{1 ,\dots 1}_{j-1}, \varpi, 1 ,\dots,1).
$$
Since the Weyl group $\Omega(\widehat{T})$ acts by permuting the $x_i$'s and $y_j$'s and 
since the right-hand side of \eqref{eq:heckepolmaxtorus} is symmetric on both the $x_i$'s and the $y_j$'s, it is plain that $H_w(z) \in \C[\widehat{T}]^{\Omega(T)}[z]$.  
Now, $\C[\widehat{T}]^{\Omega(T)} \isom \calH(T, T_c)^{\Omega(T)}$ via \eqref{eq:hecketorus}. Let 
$$
s_{0,1}=\mathbf{1}_{(1,h_1)T_c}+\mathbf{1}_{(1,h_2)T_c} \in \mathcal{H}(T,T_c)^{\Omega(T)}.
$$
and 
$$
s_{0,-1}=\mathbf{1}_{(1,h_1^{-1})T_c}+\mathbf{1}_{(1,h_2^{-1})T_c} \in \mathcal{H}(T,T_c)^{\Omega(T)}. 
$$
Let 
$$
s_{1,0}=\mathbf{1}_{(g_1,1) T_c}+\mathbf{1}_{(g_2,1) T_c}+\mathbf{1}_{(g_3,1) T_c} \in 
\mathcal{H}(T,T_c)^{\Omega(T)},
$$ 
and 
$$
s_{-1,0}=\mathbf{1}_{(g_{1}^{-1},1) T_c}+\mathbf{1}_{(g_2^{-1},1) T_c}+\mathbf{1}_{(g_3^{-1},1) T_c}  \in \mathcal{H}(T,T_c)^{\Omega(T)}.  
$$
In addition, let $u_V = \mathbf{1}_{(\diag(\varpi, \varpi, \varpi), 1)T_c}$ and let $u_W = \mathbf{1}_{(1, \diag(\varpi, \varpi))T_c}$. One calculates that
\begin{align*}
H_w(z)&= z^6 - \\
&-q^{3/2} s_{-1,0}s_{0, -1}z^5 + \\
&+ q^3(s_{1,0} s_{0, -1}^2 u_V^{-1} + s_{-1,0}^2u_W^{-1} - 2s_{1,0}u_V^{-1}u_W^{-1})z^4 - \\
&- q^{9/2} (s_{0, -1}^3 u_V^{-1} - 3 s_{0, -1} u_V^{-1} u_W^{-1} + s_{1,0} s_{-1,0} s_{0, -1} u_V^{-1} u_W^{-1})z^3 + \\
&+ q^6(s_{1,0}^2 u_V^{-2} u_W^{-2}  + s_{-1,0}s_{0, -1}^2 u_V^{-1}u_W^{-1} - 2s_{-1,0}u_V^{-1}u_W^{-2})z^2 - \\
&- q^{15/2} s_{1,0}s_{0, -1}u_V^{-2} u_W^{-2} z + \\
&+ q^9 u_V^{-2} u_W^{-3}, 
\end{align*}
when viewed as a polynomial in $\mathcal{H}_\C(T, T_c)^{\Omega(T)}[x]$, and moreover that this polynomial in fact has coefficients in the subalgebra $\mathcal{H}_{\Z[q^{\pm 1/2}]}(T,T_c)^{\Omega(T)}[z]$.
We wish to express the coefficients of this polynomial as elements of $\mathcal H(G,K)$, i.e. to invert the Satake isomorphism $\mathcal H(G, K) \xra{\widehat \cdot } \mathcal{H}(T,T_c)^{\Omega(T)}$; this final polynomial, which we will continue to denote $H_w(z)$, is the Hecke polynomial of the introduction. This may be done via a building-theoretic approach. To simplify the calculations, first observe that there is an isomorphism
\begin{equation}\label{eq:glpgl}
\mathcal H(G_V, K_V) \isom \mathcal H(G_V^{\ad}, K_V^{\ad})[u, u^{-1}], 
\end{equation}
where $u$ denotes a formal variable, that yields (using the previously chosen bases) an isomorphism 
$$
\mathcal H(\GL_3(k_0), \GL_3(\cO)) \isom \mathcal H(\PGL_3(k_0), \PGL_3(\cO))[u, u^{-1}].
$$

\subsection{The building for $\PGL_n(k_0)$}
The projective linear groups $\PGL_n(k_0)$ over $p$-adic fields have associated polysimplical complexes (Bruhat--Tits buildings) described in detail in \cite{bruhat-tits:3}. For the purposes of this paper, we only recall certain features of these buildings that will be used and leave the reader to consult \cite{bruhat-tits:3} for a complete treatment (see also \cite{goldman-iwahori} for an alternative interpretation in terms of $p$-adic norms).   

\subsubsection{Hyperspecial points}
Let $\mathscr V$ be a $k_0$-vector space of dimension $n$ and consider the Bruhat--Tits building 
$\cB(\PGL(\mathscr V))$ of the $p$-adic groups 
$\PGL(\mathscr V)$. 
The hyperspecial points 
$\Hyp_{\PGL(\mathscr V)} \subset \cB(\PGL(\mathscr V))$ are bijection with $k_0^\times$-homothety classes $[L]$ of $\cO$-lattices $L \subset \mathscr V$. 
 
\subsubsection{Apartments, facets and chambers} 
A framing of $\mathscr V$ is a set $\mathcal F = \{\ell_1, \dots, \ell_n\}$ of $k_0$-lines $\ell_i$ that span $\mathscr V$. Each framing determines a decomposition of $k_0$-vector spaces  
\begin{equation}\label{eq:decomp}
\mathscr V = \ell_1 \oplus \dots \oplus \ell_n. 
\end{equation}
We call a lattice $L \subset \mathscr V$ \emph{adapted} to the decomposition \eqref{eq:decomp} if 
$$
L = (\ell_1 \cap L) \oplus \dots \oplus (\ell_n \cap L).  
$$
Clearly, the property of an $\cO$-lattice of being adapted to a decomposition is invariant under $k_0$-homothety. 
Given a framing $\cF$, the set of homothety classes of $\cO$-lattices adapted to the  decomposition determined by that framing forms an apartment $\cA_{\PGL(\mathscr V)}(\cF)$ of $\cB(\PGL(\mathscr V))$. Conversely, each apartment of 
of $\cB(\PGL(\mathscr V))$ arises from some framing $\mathcal F$ of $\mathscr V$. 
Chambers ($n-1$-simplices) of $\cB(\PGL(\mathscr V))$ correspond to chains of full-rank $\cO$-lattices 
$$
L_0 \subsetneq L_1 \subsetneq \dots \subsetneq L_{n-1} \subsetneq \varpi^{-1} L_0
$$
where each successive quotient is 1-dimensional. 
More generally, for $r \leq n$, $r$-faces (or $r$-simplices) of $\cB(\PGL(\mathscr V))$ correspond to chains  
$$
L_0 \subsetneq L_1 \subsetneq L_2 \subsetneq \dots \subsetneq L_r \subsetneq \varpi^{-1} L_0. 
$$
We refer to $(n-2)$-faces as facets. Two chambers $\cC'$ and $\cC''$ are called adjacent if they share a common facet.  
 
\subsubsection{Adjacency relations and relative positions}
Given two full-rank $\cO$ lattices $L'$ and $L''$ of $\mathscr V$, the theory of elementary divisors yields a $k_0$-basis $\{e_1', \dots, e_n'\}$ of $\mathfrak V$ together with integers $a_1 \geq a_2 \geq \dots \geq a_n$ such that 
$$
L' = \cO e_1' \oplus \dots \oplus \cO e_n' \qquad \text{and} \qquad L'' = \varpi^{a_1} e_1' \oplus \dots \oplus \varpi^{a_n} e_n'. 
$$
Even if the basis $\{e_1', \dots, e_n'\}$ need not be unique, the (ordered) $n$-tuple $(a_1, \dots, a_n)$ is uniquely determined by the lattices $L'$ and $L''$ and we refer to it 
as the relative position of $L'$ and $L''$ and denote it by 
$$
\{L'' : L'\} = (a_1, \dots, a_n). 
$$
Similarly, if $[L']$ and $[L'']$ are two homothety classes of full-rank $\cO$-lattices, we can always find representatives $L'$ and $L''$ 
such that 
$$
\{L'' : L'\} = (a_1, \dots, a_{n-1}, 0), \qquad a_1 \geq a_2 \geq \dots \geq a_{n-1} \geq 0, 
$$
and the $(n-1)$-tuple $(a_1, \dots, a_{n-1})$ will be uniquely determined from $[L']$ and $[L'']$. We thus denote it by 
$$
\{[L''] : [L']\} = (a_1, \dots, a_{n-1}). 
$$ 

\subsubsection{Gallery distance on the buildings and on apartments}
The main notion of a distance function on a building that we will be using is the \emph{gallery distance}. Given any two chambers $\cC'$ and $\cC''$ of (of either one of the buildings), we define the gallery distance $\dist(\cC', \cC'')$ as the minimal non-negative integer $n$ for which there exists a 
gallery $\cC_0, \cC_1, \dots, \cC_n$ such that $\cC_0 = \cC'$ and $\cC_n = \cC''$ (recall that a gallery is a sequence of chambers so that for any $i = 0, 1, \dots, n-1$, $\cC_i$ and 
$\cC_{i+1}$ are adjacent chambers). More generally, if $x$ is any point on the corresponding building and $\cC'$ is any chamber then we define the distance $\dist(x, \cC')$ as the minimal $n$ for which there exists a gallery $\cC_0, \dots, \cC_n$ such that $x \in \cC_0$ and $\cC_n = \cC'$.  

\subsubsection{Canonical retractions}
Let $\mathcal A$ be an apartment of $\cB(\PGL(\mathscr V))$ and let $\mathcal C \subset \mathcal A$ be a chamber. One can show \cite[\textsection 4.2]{garrett:buildings}
that there exists a retraction map $\rho_{\cA, \cC} \colon \cB(\PGL(\mathscr V)) \ra \cA$ satisfying the following properties: 
\begin{enumerate}
\item For any chamber $\mathcal D$ of $\mathscr A$ and a facet $x \in \cC$, 
$$
\dist(x, \mathcal D) = \dist_{\cA}(x, \mathcal D), 
$$
where $\dist$ denotes the gallery distance function on $\cB(\PGL(\mathscr V))$ and $\dist_{\cA}$ denotes the gallery distance on the 
apartment $\cA$. 

\item When restricted to any other apartment $\cA'$ containing $\cC$, the map $\rho|_{\cA} \colon \cA' \ra \cA$ is the identity 
map on the intersection $\cA' \cap \cA$. 

\item The map $\rho_{\cA, \cC}$ is the unique map of simplicial complexes that fixes the chamber $\cC$ pointwise and such that 
for each hyperspecial point $x$ of $\cC$ and each chamber $\mathcal D$ of $\cB(\PGL(\mathscr V))$, 
$$
\dist(x, \mathcal D) = \dist(x, \rho_{\cA, \cC}(\mathcal D)).
$$
\end{enumerate}
 
\subsection{The sub-building $\cB(\GL(\mathscr W)) \subset \cB(\PGL(\mathscr V))$}
Let $\mathscr W \subset \mathscr V$ be a codimension 1 $k_0$-vector subspace and let $\mathscr D$ be a complement, i.e., a $k_0$-line such that $\mathscr V = \mathscr W \oplus \mathscr D$. Suppose that $L_{\mathscr D} \subset \mathscr D$ is a fixed $\cO$-lattice. Once these are fixed, there is a way of defining a sub-building $\cB(\GL(\mathscr W))$ of the building $\cB(\PGL(\mathscr V))$ as follows: given any $\cO$-lattice $L_{\mathscr W}$, we view that as a hyperspecial vertex in $\cB(\PGL(\mathscr V))$ by considering the homothety class $[L_{\mathscr W} \oplus L_{\mathscr D}]$. The subcomplex $\cB(\GL(\mathscr W))$ (having the same dimension as the building $\cB(\PGL(\mathscr V))$) then inherits all the metric properties from the building $\cB(\PGL(\mathscr V))$. In particular, the 
apartments of $\cB(\GL(\mathscr W))$ correspond to the framings for $\mathscr V$ containing $\mathscr D$ as one of the framing lines. Chambers for the building $\cB(\GL(\mathscr W))$ will then correspond to chambers of the building $\cB(\PGL(\mathscr V))$ whose vertices are hyperspecial points of $\cB(\GL(\mathscr W))$. Finally, canonical retraction maps for $\cB(\GL(\mathscr W))$ are naturally inherited from the retraction maps for $\cB(\PGL(\mathscr V))$.  
 
\subsection{Inversion of $\mathcal H_{\Z[q^{\pm 1/2}]}(G, K) \rightarrow \mathcal H_{\Z[q^{\pm 1/2}]}(T, T_c)^{\Omega(T)}$}\label{notation:inverse}

Let $x_0$ be the homothety class $[L_V]$ of the lattice $L_V = \langle e_1, e_2, e_3\rangle$ which is a hyperspecial point on the building of $G_V^{\ad} \isom \PGL_3(k_0)$. Let $y_0$ be the lattice $\langle e_1, e_2 \rangle$ and view it as a hyperspecial point on $G_W$. Let $K_V^{\ad} \times K_W \subset G_V^{\ad} \times G_W$ be the image of $K$ under the surjection $G \twoheadrightarrow G_V^{\ad} \times G_W$. In other words, $K_V^{\ad}$ is the stabilizer (in $G_V^{\ad}$) of $x_0$ and $K_W$ is the stabilizer of $y_0$ in $G_W$. 

Since $\mathcal H(G, K) \isom \mathcal H(G_V^{\ad} \times G_W, K_V^{\ad} \times K_W)[u, u^{-1}]$ and $u \mapsto \mathbf{1}_{(\diag(\varpi, \varpi, \varpi), 1)T_c} = u_V$ under the Satake isomorphism, it suffices to invert the Satake isomorphism on 
$\mathcal H_{\Z[q^{\pm 1/2}]}(G_V^{\ad}, K_V^{\ad})  \otimes \mathcal H_{\Z[q^{\pm 1/2}]}(G_W, K_W)$. 

Let $t_V=\text{diag}(\varpi^{a_1}, \varpi^{a_{2}}, 1)$ and $t_W = \text{diag}(\varpi^{b_1}, \varpi^{b_{2}})$ such that $a_1 \ge a_2 \geq 0$ and $b_1 \ge b_{2}$. Then

\begin{equation}\label{eq:hecke-adjV}
\mathbf{1}_{K_V^{\ad} t_V K_V^{\ad}} (x_0) =   \sum_{\substack{x \in \cB(G_V^{\ad})\\  \{x : x_{0}\}= \{q^{a_1}, q^{a_{2}}, 1\}}} (x). 
\end{equation}
and 
\begin{equation}\label{eq:hecke-adjW}
\mathbf{1}_{K_W t_W K_W} (y_0) = \sum_{\substack{y \in \cB(G_W)\\  \{y : y_0\}= \{q^{b_1}, q^{b_2}\}}} (y). 
\end{equation}

Let $\cA_V$ be the apartment of $\cB(G_V^{\ad})$ determined by the fixed basis $\left \{e_1, e_2, e_3\right \}$ and let $\cA_W$ be the apartment of $\cB(G_W)$ determined by $\left \{e_1, e_{2}\right \}$. Following the idea in \cite[Prop.4.2]{jetchev:unitary}, we fix chambers $\cC_V$ and $\cC_W$ of $\cA_V$ and $\cA_W$ containing $x_0$ and $y_0$, respectively and we first compute $\cS \circ |_{B_V^{\ad}} \mathbf{1}_{K_V^{\ad} t_V K_V^{\ad}}$ and $\cS \circ |_{B_W} \mathbf{1}_{K_W t_W K_W}$ in terms of the canonical retraction maps  
$\rho_{\cA_V, \cC_V}$ and $\rho_{\cA_W, \cC_W}$, respectively. The relation is obtained by considering \eqref{eq:hecke-adjV} (resp., \eqref{eq:hecke-adjW}) modulo the action of the unipotent radical of the Borel subgroup $B_V^{\ad} \subset G_V^{\ad}$ (resp., $B_W \subset G_W$) of upper-triangular matrices. One obtains

\begin{equation}\label{eq:satake-building}
\cS \circ |_{B_V^{\ad}} \mathbf{1}_{K_V^{\ad} t_V K_V^{\ad}} (x_0) = \sum_{\substack{x \in \cB(G_V^{\ad})\\  \{x : x_{0}\}= \{q^{a_1}, q^{a_{2}}, 1\}}} \rho_{\cA_V, \cC_V}(x), 
\end{equation}
and similarly, 
\begin{equation}\label{eq:satake-buildingW}
\cS \circ |_{B_W} \mathbf{1}_{K_Wt_W K_W} (y_0) = \sum_{\substack{y \in \cB(G_W)\\  \{y : y_{0}\}= \{q^{b_1}, q^{b_{2}}\}}} \rho_{\cA_W, \cC_W}(y), 
\end{equation}

Here, the canonical retractions are defined with respect to the chambers $\cC_V$ and $\cC_W$, i.e., $\rho_{\cA_V, \cC_V}(x)$ is the unique point $x'_V$ in $\cA_V$ so that $\dist (x_V', \cC_{V}) = \dist(x_V, \cC_{V})$.

\subsection{Counting images under the canonical retraction map}
Let $\cA_V$ be as in the previous section and let $x \in \cA_V$ be a ``neighbor" of the vertex $x_0$. The latter means that there exists a chamber containing both $x$ and $x_0$. To finish inverting the Satake transform, it remains to count the number of hyperspecial vertices in $\cB(G_V^{\ad})$ adjacent to $x_0$ that retract to $x$. To do this, we will restrict our attention to small neighborhoods of $x_0$ in $\cB(G_V^{\ad})$ called ``hexagons,'' showing that every point retracting to $x$ is contained in at least one hexagon and then counting the number of hexagons containing such a point.

\subsubsection{Hexagons and the main result}
For any apartment $\cA_V'$ containing $x_0$, the six chambers of $\cA_V'$ that have $x_0$ as a vertex (together with all their faces) form a \emph{hexagon}. We will call two apartments $\cA_V'$ and $\cA_V''$ containing $x_0$ \emph{equivalent} if their hexagons around $x_0$ coincide. We wish to count the number of hexagons containing the chamber $\cC_V$; equivalently, the number of equivalence classes of apartments.

Given the fixed vertex $x_0$ corresponding to the homothety class $[L_{x_0}]$ of the lattice $L_{x_0}$, one gets a coloring on the set of hyperspecial vertices of any apartment $\cA_V'$ containing $x_0$ that are neighbors of $x_0$ by the following rule: if $x$ is a neighbor of $x_0$ corresponding to a homothety class $[L_x]$ of lattices, consider the unique representative $L_{x}$, such that $qL_{x_0} \subsetneq L_x \subsetneq L_{x_0}$. 
We say that $x$ is \emph{even} if $L_x/qL_{x_0}$ is a plane in $L_{x_0}/qL_{x_0}$ and we say that $x$ is \emph{odd} if $L_{x}/qL_{x_0}$ is a line in $L_{x_0}/qL_{x_0}$. 
Note that if $x'$ and $x''$ are two vertices that share a common 2-simplex with $x_0$ in $\cB(G_V^{\ad})$ then $x'$ and $x''$ have opposite parity. We will unambiguously write $x_1$ for the odd vertex of $\cC_V$ and $x_6$ for the even vertex.

For the fixed apartment $\cA_V$, we label the six neighbors of $x_0$ by $x_1, \ldots, x_6$ counterclockwise (note that $x_1$, $x_6$ have already been labeled). We may now state the main result:
\begin{prop}\label{prop:count}
Under the canonical retraction map $\rho_{\cA_V, \cC_V}$,
\begin{itemize}
\item The vertices $x_1$ and $x_6$ each have a unique preimage (namely, $x_1$ and $x_6$, respectively).
\item The vertices $x_2$ and $x_5$ each have $q$ pre-images.
\item The vertices $x_3$ and $x_4$ each have $q^2$ pre-images.
\end{itemize}
\end{prop}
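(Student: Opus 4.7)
The plan is to reduce the count to an orbit count on lines and planes of $\overline{L} := L_{x_0}/\varpi L_{x_0} \isom \F_q^3$, using the action of the Iwahori stabilizer of $\cC_V$. I would begin by identifying the six neighbors of $x_0$ in $\cA_V$ with specific lines and planes in $\overline{L}$: using the basis $\{e_1, e_2, e_3\}$, the labeling compatible with the chamber structure is $x_1 = \langle e_1\rangle$, $x_2 = \langle e_1, e_2\rangle$, $x_3 = \langle e_2\rangle$, $x_4 = \langle e_2, e_3\rangle$, $x_5 = \langle e_3\rangle$, $x_6 = \langle e_1, e_3\rangle$, so that $\cC_V$ corresponds to the flag $(x_1 \subset x_6)$ in $\overline{L}$. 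Since $\rho_{\cA_V,\cC_V}$ preserves distances from $x_0 \in \cC_V$, each preimage $\rho^{-1}(x_i)$ automatically lies among the neighbors of $x_0$ and has the same parity as $x_i$.

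The heart of the argument is to show that $\rho_{\cA_V, \cC_V}$ is invariant under the stabilizer $J$ of $\cC_V$ in $K_V^{\ad}$, meaning $\rho(bx) = \rho(x)$ for all $b \in J$ and all vertices $x$. Such a $b$ fixes each hyperspecial vertex $y$ of $\cC_V$ individually (types are preserved by the $\PGL_3$-action, and $x_0, x_1, x_6$ carry distinct types), so for any $x$ and hyperspecial $y \in \cC_V$ one has $\dist(y, bx) = \dist(b^{-1}y, x) = \dist(y, x) = \dist(y, \rho(x))$; the uniqueness clause in the defining property of $\rho$ then forces $\rho(bx) = \rho(x)$. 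The image of $J$ in $\PGL_3(\F_q)$ is the Borel subgroup $B$ stabilizing the flag $(x_1, x_6)$, and by the Bruhat decomposition for $\mathbb{P}^2(\F_q)$, $B$ acts on lines of $\overline{L}$ with three orbits: $\{x_1\}$, the $q$ lines contained in $x_6$ other than $x_1$, and the $q^2$ lines not contained in $x_6$; the dual statement gives the corresponding orbit decomposition on planes.

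Since $\rho$ fixes $\cA_V$ pointwise and is $B$-invariant, each $\rho^{-1}(x_i)$ contains the $B$-orbit through $x_i$. A direct check of orbit membership shows: $x_1$ and $x_6$ are $B$-fixed; $x_5 = \langle e_3\rangle$ lies in the orbit of lines in $x_6 \setminus \{x_1\}$; $x_3 = \langle e_2\rangle$ lies in the orbit of lines not in $x_6$; $x_2 = \langle e_1, e_2\rangle$ lies in the orbit of planes containing $x_1$ other than $x_6$; and $x_4 = \langle e_2, e_3\rangle$ lies in the orbit of planes not containing $x_1$. This gives lower bounds $1, q, q^2$ on the three preimage sizes in each parity, summing to $1 + q + q^2 = |\mathbb{P}^2(\F_q)|$, which is exactly the total number of odd (resp.\ even) neighbors of $x_0$. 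The bounds must therefore all be equalities, yielding the claimed counts. The main potential difficulty is the clean verification of $B$-invariance of $\rho$; once that is in place, the remaining combinatorics is elementary Bruhat-orbit counting.
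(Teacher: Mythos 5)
Your proof is correct, and it takes a genuinely different route from the paper's. The paper argues by counting \emph{hexagons}: a hexagon through $x_0$ corresponds to a framing $\{L_1, L_3, L_5\}$ of $L_{x_0}/\varpi L_{x_0}$, and the authors show that every neighbor of $x_0$ retracting to $x_i$ lies in some hexagon containing $\cC_V$; they then compute (Lemma~\ref{lem:count}) that there are $q^3$ hexagons through $\cC_V$ and that each point of type $i$ lies in $q^3, q^2, q, q, q^2, q^3$ of them respectively, so dividing gives the preimage counts $1, q, q^2, q^2, q, 1$. Your argument instead establishes $J$-invariance of $\rho_{\cA_V,\cC_V}$ (where $J$ is the Iwahori stabilizing $\cC_V$) via the uniqueness characterization of the retraction, reduces the preimage structure to the action of the Borel $B = \mathrm{im}(J \to \PGL_3(\F_q))$ on $\mathbb{P}^2(\F_q)$ and its dual, invokes the Bruhat decomposition to get orbit sizes $1, q, q^2$, and closes with the pigeonhole observation that these lower bounds saturate the total count $1 + q + q^2$ of neighbors of each parity. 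Both proofs ultimately rest on the flag-variety geometry over the residue field, but your version makes the group-theoretic mechanism explicit (and would generalize verbatim to $\PGL_n$), whereas the paper's hexagon count is more concrete and avoids invoking the uniqueness clause of the retraction's characterization. One small point worth spelling out in a final write-up: the step ``$b$ fixes each vertex of $\cC_V$ individually'' uses that $b \in K_V^{\ad}$ fixes $x_0$ and hence is type-preserving (a general element of $\PGL_3(k_0)$ can permute types); you allude to this but it deserves an explicit sentence. Similarly, the claim that $\rho$ carries odd (resp.\ even) neighbors of $x_0$ to odd (resp.\ even) neighbors uses that $\rho$ is type-preserving, which follows from property (2) of the retraction since each neighbor lies in an apartment through $\cC_V$ where $\rho$ restricts to a type-preserving isomorphism.
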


\begin{proof}
Let $H$ be an arbitrary hexagon containing $\cC_V$. To compute $\rho_{\cA_V, \cC_V}$ on the vertices of $H$, label the vertices of $H$ counter-clockwise $x_1, y_2, y_3, y_4, y_5, x_6$ ($x_1$ and $x_6$ are already labeled). Then $x_1$ and $x_6$ are fixed and $y_i \mapsto x_i$ for $2 \leq i \leq 5$ (since $\dist(y_i, \cC_V) = \dist(x_i, \cC_V)$ where $\dist$ denotes the gallery distance). Now, if $x$ is an arbitrary neighbor of $x_0$ in $\cB(G_V^{\ad})$ then (by the building axioms) there is an apartment $\cA_V'$ containing $\cC_V$ and $x$ and hence a hexagon $A = A(\cA_V')$ containing $\cC_V$ and $x$. Labeling $H$ with the notation schema above, $x$ is of one of the types $1, 2, \ldots, 6$, and it is clear that this type is independent of the choice of $\cA_V$ and $H$.

To prove the proposition, we count the total number of hexagons containing $\cC_V$ and then, for $i = 1, 2, \ldots, 6$, count the number of hexagons containing a fixed point of type $i$. The quotient then gives the number of distinct points of type $i$, which is the number of pre-images of $x_i$. The proposition is thus a direct consequence of Lemma~\ref{lem:count} below. 
\end{proof}

\begin{lem}\label{lem:count}
There are $q^3$ hexagons containing $\cC_V$. Of these hexagons: 
\begin{itemize}
\item all $q^3$ contain $x_1$ and $x_6$. 
\item for a neighbor $y_2 \in \cB(G_V^{\ad})$ of $x_0$ of type 2, there are $q^2$ hexagons containing $y_2$ and $\cC_V$.
\item for a neighbor $y_3 \in \cB(G_V^{\ad})$ of $x_0$ of type 3, there are $q$ hexagons containing $y_3$ and $\cC_V$.
\item for a neighbor $y_4 \in \cB(G_V^{\ad})$ of $x_0$ of type 4, there are $q$ hexagons containing $y_4$ and $\cC_V$.
\item for a neighbor $y_5 \in \cB(G_V^{\ad})$ of $x_0$ of type 5, there are $q^2$ hexagons containing $y_5$ and $\cC_V$.
\end{itemize}
\end{lem}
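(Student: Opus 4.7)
The plan is to identify the combinatorics of hexagons around $x_0$ with that of triangles in the residual projective plane, and then reduce the lemma to an elementary incidence count. First I would use the standard dictionary between the link of $x_0$ in $\cB(G_V^{\ad})$ and the spherical building of type $A_2$ over $\F_q$, namely the flag complex of $\mathbb{P}(\overline{L})$ with $\overline{L} := L_{x_0}/\varpi L_{x_0}$: a hyperspecial neighbor of $x_0$ corresponds to a proper nonzero subspace of $\overline{L}$ (the image of the representative with $\varpi L_{x_0} \subsetneq L_x \subsetneq L_{x_0}$), odd neighbors being lines in $\overline{L}$, i.e.\ points of $\mathbb{P}^2(\F_q)$, and even neighbors being planes in $\overline{L}$, i.e.\ lines of $\mathbb{P}^2(\F_q)$. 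Two such neighbors share a chamber with $x_0$ iff the corresponding point lies on the corresponding line. Under this dictionary $\cC_V$ becomes an incident flag $(P,\ell)$ with $P = x_1$ and $\ell = x_6$.

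Next, every apartment of $\cB(G_V^{\ad})$ containing $x_0$ is cut out by a framing of $V$, and its hexagon around $x_0$ depends only on the induced residual framing of $\overline{L}$. Thus equivalence classes of apartments through $x_0$ are in bijection with unordered triangles $\{P_1,P_2,P_3\}$ of non-collinear points in $\mathbb{P}^2(\F_q)$. The six vertices of the associated hexagon are $P_1, P_1P_2, P_2, P_2P_3, P_3, P_3P_1$, arranged cyclically with alternating point/line parity. Hexagons containing $\cC_V$ correspond to such triangles in which one vertex is $P$ and the incident side is $\ell$; normalizing, one takes $P_1 = P$ and $P_1P_2 = \ell$, which gives the identification $x_1 = P_1$, $y_2 = P_1P_3$, $y_3 = P_3$, $y_4 = P_2P_3$, $y_5 = P_2$, $x_6 = P_2P_1$.

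With this setup the enumeration is direct. The free choices are $P_2 \in \ell \setminus \{P\}$ and $P_3 \notin \ell$, giving $q \cdot q^2 = q^3$ hexagons, which proves the first assertion; the inclusion of $x_1$ and $x_6$ in every such hexagon is automatic. Fixing $y_2 = P_1P_3$ forces $P_3 \in y_2 \setminus \{P\}$ while $P_2 \in \ell \setminus \{P\}$ stays free, and since $\ell \cap y_2 = \{P\}$ no further degeneracy arises, giving $q^2$. Symmetrically, fixing $y_5 = P_2 \in \ell \setminus \{P\}$ leaves $q^2$ choices of $P_3 \notin \ell$. Fixing $y_3 = P_3$ (a point off $\ell$) leaves $q$ choices for $P_2 \in \ell \setminus \{P\}$. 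Finally, fixing $y_4$ (a line not through $P$ and distinct from $\ell$), the unique intersection $y_4 \cap \ell$ determines $P_2$, and $P_3 \in y_4 \setminus \{P_2\}$ then has $q$ choices, with $P_3 \notin \ell$ automatic.

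The main technical obstacle is really the first part of this argument: verifying that the equivalence classes of apartments through $x_0$ are parametrized by residual framings of $\overline{L}$ (rather than by liftings), and correctly matching the cyclic labeling $x_1, y_2, y_3, y_4, y_5, x_6$ around $x_0$ with the point/line alternation in the projective-plane picture. Once this dictionary is established, the counts reduce to the elementary incidence geometry of $\mathbb{P}^2(\F_q)$ carried out above, and Proposition~\ref{prop:count} follows by dividing the total number of hexagons containing $\cC_V$ by the number containing a point of each prescribed type.
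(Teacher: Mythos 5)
Your argument is correct and is essentially the paper's: both identify hexagons through $\cC_V$ with residual framings of $L_{x_0}/\varpi L_{x_0}$ (equivalently, non-degenerate triangles in $\mathbb{P}^2(\F_q)$ with a distinguished flag $(P,\ell)$) and then count by elementary incidence geometry, getting $q^3$, $q^2$, $q$ in the same way. The only cosmetic differences are that you work in the projectivized picture throughout and compute the type $4$ and $5$ counts directly, whereas the paper deduces them from types $3$ and $2$ by line–plane duality.
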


\begin{proof}
Note that a hexagon corresponds to a choice of three lines $\{ L_1, L_3, L_5 \}$ in the $\F$-vector space $L_{x_0} / qL_{x_0}$ that span that vector space (we call such a choice of lines a \emph{framing} of $L_{x_0} / qL_{x_0}$). The ``odd'' vertices correspond to the lines with the same index, and the ``even'' vertices correspond to the planes spanned by their neighbors. Write $\ell_1$ and $P_6 \supset \ell_1$ for the line (resp., the plane) of $L_{x_0} / qL_{x_0}$ determined by the vertix $x_1$ (resp., $x_6$). The total number of hexagons containing $\cC_V$ is equal to the number of distinct framings $\{ L_1, L_3, L_5 \}$ of $L_{x_0} / pL_{x_0}$ such that $L_1 = \ell_1$, $L_1 \oplus L_5 = P_6$, and $L_{x_0} / qL_{x_0} = L_1 \oplus L_3 
\oplus L_5$. There are $(q+1) - 1 = q$ choices for $L_5$ since $L_5$ is a line in the fixed plane $P_6$ different from $\ell_1$. Once $L_1$ and $L_5$ are fixed, we have $\displaystyle \frac{q^3 - 1}{q-1} - (q+1) = q^2$ choices for $L_3$, since $L_3$ can be any line not contained in $P_6$. Thus, there are $q^3$ hexagons containing $\cC_V$. 

Let $y_2$ be a fixed neighbor of type $2$ corresponding to a plane $P_2 \supset \ell_1$ in $L_{x_0} / qL_{x_0}$ that is different from $P_6$. A hexagon $\{ L_1, L_3, L_5\}$ contains $\cC_V$ and $y_2$ if and only if $L_1 = \ell_1$, $\langle L_1, L_5 \rangle = P_6$, and $\langle L_1, L_3 \rangle = P_2$. As before, there are $q$ possible choices for $L_5$ and once $L_1$ and $L_5$ are fixed, there are $q$ possible choices for $L_3$ in $P_2$ (since $P_2 \cap P_6 = \ell_1$, any line other than $\ell_1$ in $P_6$ will be independent from $\ell_1, L_5$). Thus, there are $q^2$ distinct hexagons containing $y_2$.

Let $y_3$ be a fixed type $3$ point, corresponding to a line $\ell_3$ in $L_{x_0} / qL_{x_0}$ not contained in $P_6$. Then a hexagon $\{L_1, L_3, L_5\}$ contains $\cC_V$ and $y_3$ if and only if $\ell_1 = L_1$, $\langle L_1, L_5 \rangle = P_6$, and $L_3 = \ell_3$. Since $L_1$ and $L_3$ are fixed by these conditions, the number of hexagons containing $\cC_V$ is the number of distinct choice for $L_5$ which is $q$.

The arguments for neighbors of type $4$ and $5$ follow from the arguments for type $2$ and $3$ points by the incidence-preserving duality between lines and planes in $L_{x_0} / qL_{x_0}$.
\end{proof}

\subsubsection{The inverse Satake isomorphism}\label{subsubsec:invertsatake}
To invert the Satake isomorphism on $G_V$ we first invert it on the level of $G_V^{\ad}$ and then use the isomorphism \eqref{eq:glpgl} and the 
fact that $\ds \widehat{\mathbf{1}_{\varpi K_V}} = u_V$. To invert the isomorphism on $G_V^{\ad}$ (isomorphic to $\PGL_3(k_0)$), we use  \eqref{eq:satake-building} and Proposition~\ref{prop:count}.   
Let $g', g'', g''' \in G_V$ be the elements corresponding to the diagonal matrices $\diag(\varpi, 1, 1)$, $\diag(\varpi,\varpi, 1)$ and $\diag(\varpi, \varpi, \varpi)$ under the isomorphism $G_V \isom \GL_3(k_0)$ (determined by the basis $\{e_1, e_2, e_3\}$). For $\bullet \in \{', '', '''\}$ let 
$t_{g^{\bullet}} \in \calH(G, K)$ denote the element $\mathbf{1}_{K (g^{\bullet}, 1) K}$. Similarly, define $h', h'' \in G_W$ to be the elements corresponding to 
$\diag(\varpi, 1)$ and $\diag(\varpi, \varpi)$ under $G_W \isom \GL_2(k_0)$ and $t_{h'}$ and $t_{h''}$ the corresponding elements in $\calH(G, K)$. Using \eqref{eq:satake-building} and Proposition~\ref{prop:count}, we obtain 
\begin{equation}
\cS \circ |_{B_V^{\ad}} \mathbf{1}_{K_V^{\ad} g' K_V^{\ad}}  =  \mathbf{1}_{g_{1} T_{V, c}^{\ad}}+ q\mathbf{1}_{g_{2} T_{V, c}^{\ad}}+ q^{2}\mathbf{1}_{g_{3} T_{V, c}^{\ad}}
\end{equation}
 and 
\begin{equation}
\cS \circ |_{B_V^{\ad}} \mathbf{1}_{K_V^{\ad} g'' K_V^{\ad}}  =  \mathbf{1}_{g_{1}g_{2}T_{V, c}^{\ad}}+ q\mathbf{1}_{g_{2}g_{3}T_{V, c}^{\ad}}+ q^{2}\mathbf{1}_{g_{3}g_{1} T_{V, c}^{\ad}},  
\end{equation}
where $g_1, g_2, g_3 \in T_V$ are the elements defined in Section~\ref{subsec:heckepoltorus} (and by abuse of notation, we also use those to denote the corresponding images in $T_V^{\ad}$). 
We now use these to compute the Satake tranform $\widehat{\cdot} \colon \mathcal H(G_V, K_V) \ra \mathcal H(T_V, T_{V, c})^{\Omega(T_V)}$ on the elements $t_{g'}, t_{g''}, t_{g'''}$: 
\[
\left | 
\begin{array}{l}
\widehat{t_{g'}}  = q( \mathbf{1}_{(g_{1}, 1)T_c}+ \mathbf{1}_{(g_{2}, 1) T_c}+ \mathbf{1}_{(g_{3}, 1) T_c}) = q s_{1, 0}\\ 
\widehat{t_{g''}}  = q( \mathbf{1}_{(g_{1}g_{2}, 1) T_c}+ \mathbf{1}_{(g_{2}g_{3}, 1) T_c}+ \mathbf{1}_{(g_{3}g_{1}, 1)T_c}) = q u_V s_{-1, 0}\\
\widehat{t_{g'''}} = \mathbf{1}_{(g_1g_2g_3, 1)T_c} = u_V.
\end{array}
\right .
\]
Note that the right-hand sides of the above three formulas are all invariant under the Weyl 
group, i.e., they lie in $\mathcal H(T, T_c)^{\Omega(T)}$. Hence 
\begin{equation}\label{eq:satakeV}
s_{1, 0} = q^{-1} \widehat{t_{g'}}, \qquad s_{-1, 0} = q^{-1} u_V^{-1} \widehat{t_{g''}}, \qquad \text{and} \qquad u_V = \widehat{t_{g'''}}.
\end{equation}

Similarly, we write the Satake transform for the small group $G_W$: setting $h' = \diag(\varpi, 1)$ and $h'' = \diag(\varpi, \varpi)$, we have 
\[
\left | 
\begin{array}{l}
\widehat{t_{h'}}  = q^{1/2} \left ( \mathbf{1}_{(1, h_1)T_c}+ \mathbf{1}_{(1, h_2) T_c} \right ) \\ 
\widehat{t_{h''}}  =  \mathbf{1}_{(1, h_1 h_2)T_c}  = u_W.   
\end{array}
\right .
\]
Using the above transformations as well as $u_W s_{0, -1} = s_{0, 1}$, we get 
\begin{equation}\label{eq:satakeW}
s_{0, 1} = q^{-1/2} \widehat{t_{h'}}, \qquad \text{and} \qquad 
s_{0, -1} = q^{-1/2}u_W^{-1} \widehat{t_{h'}}. 
\end{equation}
Finally, substituting \eqref{eq:satakeV} and \eqref{eq:satakeW} into the expression for $H_w(z)$ from Section~\ref{subsec:heckepoltorus}, we obtain the Hecke polynomial $H_w(z)$ with coefficients viewed as elements of the Hecke ring $\mathcal H_{\Z[q^{\pm 1/2}]}$ (in fact, the 
coefficients belong to the subring $\calH$):  

\begin{prop}\label{thm:heckeformula}
The Hecke polynomial $H_w(z)$ is the polynomial: 
\begin{eqnarray*}
H_w(z) &=& z^6 - t_{g''} t_{g'''}^{-1} t_{h'} t_{h''}^{-1}   z^5 + \\
&+& q ( t_{g'}t_{g'''}^{-1}t_{h'}^2t_{h''}^{-2} + t_{g''}^2t_{g'''}^{-2}t_{h''}^{-1} -2q t_{g'}t_{g'''}^{-1}t_{h''}^{-1}) z^4 - \\
&-& q^2 ( t_{g'}t_{g''} t_{g'''}^{-2}t_{h'} t_{h''}^{-2} - q t_{g'''}^{-1} t_{h'}^3 t_{h''}^{-3} + 3q^2 t_{g'''}^{-1}t_{h'}t_{h''}^{-2})z^3 + \\
&+& q^4(t_{g''} t_{g'''}^{-2} t_{h'}^2 t_{h''}^{-3} + t_{g'}^2 t_{g'''}^{-2} t_{h''}^{-2} -2q t_{g''}t_{g'''}^{-2} t_{h''}^{-2})z^2 - \\
&-& q^6 t_{g'} t_{g'''}^{-2} t_{h'} t_{h''}^{-3} z + \\ 
&+& q^9 t_{g'''}^{-2} t_{h''}^{-3}. 
\end{eqnarray*}
In particular, it is a polynomial in $\calH [z]$. 

\end{prop}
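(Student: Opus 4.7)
The proposal is to deduce Proposition \ref{thm:heckeformula} by direct substitution, using the formula for $H_w(z)$ already computed in Section~\ref{subsec:heckepoltorus} as an element of $\mathcal{H}_{\mathbb{C}}(T, T_c)^{\Omega(T)}[z]$, and the explicit inverse Satake formulas \eqref{eq:satakeV} and \eqref{eq:satakeW}. The key point is that the entire computation has been reduced to a purely algebraic manipulation once Proposition~\ref{prop:count} is in hand; no further building-theoretic input is required.

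The plan is as follows. First, I would rewrite the inverse Satake formulas in a uniform fashion:
$$
s_{1,0} = q^{-1}\widehat{t_{g'}}, \quad s_{-1,0} = q^{-1}\widehat{t_{g'''}}^{\,-1}\widehat{t_{g''}}, \quad u_V = \widehat{t_{g'''}},
$$
$$
s_{0,1} = q^{-1/2}\widehat{t_{h'}}, \quad s_{0,-1} = q^{-1/2}\widehat{t_{h''}}^{\,-1}\widehat{t_{h'}}, \quad u_W = \widehat{t_{h''}},
$$
where the second identities for $s_{-1,0}$ and $s_{0,-1}$ use that $u_V s_{-1,0}=s_{1,0}$ and $u_W s_{0,-1}=s_{0,1}$ (or equivalently the elementary identities among the basis elements of $\mathcal{H}(T,T_c)$). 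Then I would substitute these into each coefficient of the polynomial displayed in Section~\ref{subsec:heckepoltorus}, coefficient by coefficient, verifying the resulting expressions match those in Proposition~\ref{thm:heckeformula}. For example, the $z^5$-coefficient becomes
$$
-q^{3/2}s_{-1,0}s_{0,-1} = -q^{3/2}\bigl(q^{-1}\widehat{t_{g'''}}^{\,-1}\widehat{t_{g''}}\bigr)\bigl(q^{-1/2}\widehat{t_{h''}}^{\,-1}\widehat{t_{h'}}\bigr) = -\widehat{t_{g''}}\widehat{t_{g'''}}^{\,-1}\widehat{t_{h'}}\widehat{t_{h''}}^{\,-1},
$$
and the fractional powers of $q$ cancel. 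One then applies the inverse Satake isomorphism $\widehat{\cdot}^{\,-1}$ to read off the coefficient as $-t_{g''}t_{g'''}^{-1}t_{h'}t_{h''}^{-1} \in \mathcal{H}(G,K)$. Analogous calculations for the coefficients of $z^4, z^3, z^2, z^1, z^0$ complete the computation.

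The main technical obstacle, and the part that must be checked carefully rather than dismissed as routine, is to verify that all the half-integer powers of $q$ arising in the torus-level formula cancel upon substitution, so that the final coefficients lie in $\mathcal{H}$ rather than only in $\mathcal{H}_{\mathbb{Z}[q^{\pm 1/2}]}$. This cancellation is expected a priori from the general normalization result quoted from \cite[\textsection 8]{gross:satake}, but it also falls out of the substitution itself: each monomial in $s_{1,0}, s_{-1,0}, s_{0,1}, s_{0,-1}, u_V^{\pm 1}, u_W^{\pm 1}$ appearing in the $z^{6-k}$-coefficient is homogeneous of total "half-integer weight" $k$ in the generators $s_{0,\pm 1}$, contributing a factor of $q^{-k/2}$ that exactly cancels the $q^{3k/2}$ prefactor in the torus formula. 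Working out this bookkeeping carefully for each coefficient is the only substantive step; no new ideas beyond those already developed in Sections~\ref{subsec:heckepoltorus} and \ref{notation:inverse} are needed. The fact that the final coefficients are Weyl-invariant, hence lie in the image of the Satake transform, is automatic since $H_w(z)$ was already verified to lie in $\mathcal{H}_{\mathbb{C}}(T,T_c)^{\Omega(T)}[z]$ by the symmetry of \eqref{eq:heckepolmaxtorus} in the $x_i$ and $y_j$.
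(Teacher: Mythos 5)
Your proposal is correct and is exactly the paper's own proof: the paper likewise obtains Proposition~\ref{thm:heckeformula} by substituting the inverse-Satake formulas \eqref{eq:satakeV} and \eqref{eq:satakeW} into the torus-level polynomial of Section~\ref{subsec:heckepoltorus} and reading off the coefficients. (One small correction to your parenthetical bookkeeping: the $s_{0,\pm 1}$-degree of a monomial appearing in the $z^{6-k}$ coefficient need not equal $k$ --- for instance $s_{-1,0}^2 u_W^{-1}$ in the $z^4$ coefficient has $s_{0,\pm 1}$-degree $0$ --- but it is always congruent to $k$ modulo $2$, since the monomial's total ``$y$-degree'' is $k$ and $u_W$ carries $y$-degree $2$; this parity fact, not exact homogeneity, is what makes the half-integer powers of $q$ cancel.)
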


\section{Distribution Relations on Invariants}\label{sec:split-distrel-inv}
We keep the notation from Section~\ref{sec:split-orbits}. 
Let $\Inv$ be the set of $H_0$-invariants on $\Hyp_V \times \Hyp_W$ from Theorem~\ref{thm:orbits}B. Let $\Hyp = \Hyp_V \times \Hyp_W$ be the set of hyperspecial points on $G = G_V \times G_W$. 
Let 
\begin{equation}\label{eq:frobh}
h_{\Fr} = (\diag(\varpi^{-1}, 1, 1), \diag(\varpi^{-1}, 1)) \in H \subset G, 
\end{equation}
which is an element of $H$ that does not belong to $H_0$. The actions of both $h_{\Fr}$ and the local Hecke algebra $\calH = \calH(G, K)$ on $\Z[\Hyp]$ descend to actions on $\Z[\Inv]$ as explained in 
Section~\ref{subsec:action-inv}.  

\subsection{Distribution relations on invariants}

By Theorem~\ref{thm:orbits}B, the invariants $\Inv$ are identified with 6-tuples  
$(k, s, r, d, m, n)$ modulo the equivalence relation described in that theorem. Define the local conductor 
$$
\bc(k, s, r, d, m, n) = \max\{0, \min\{m - n, n+d-m\}\}. 
$$
Let 
$
H_w(z) = C_0 z^6 + \dots + C_6 \in \calH[z]
$ 
be the Hecke polynomial computed in Section~\ref{sec:split-hecke}. In this section, we prove the following distribution relations on $\Z[\Inv]$ which we then use in Section~\ref{sec:split-distrel-cycles} to deduce our main result: 

\begin{thm}\label{thm:distrel-inv}
The element $H_w(h_{\Fr}) \cdot (0, 0, 0, 0, 0, 0) \in \Z[\Inv]$ is supported on 6-tuples $(k, s, r, d, m, n)$ for which $\bc(k, s, r, d, m, n)$ is either $1$ or $\varpi$. Moreover, 
$$
H_w(h_{\Fr}) \cdot (0, 0, 0, 0, 0, 0) \in (q-1) \Z[\Inv].   
$$
\end{thm}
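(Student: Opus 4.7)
The plan is to expand $H_w(h_{\Fr}) \cdot (0,0,0,0,0,0)$ using the explicit formula of Proposition~\ref{thm:heckeformula} and track, monomial by monomial, the resulting elements of $\Z[\Inv]$. First I would spell out how each of the six building blocks $t_{g'}, t_{g''}, t_{g'''}, t_{h'}, t_{h''}$ and $h_{\Fr}$ acts on $\Z[\Hyp]$, and then push that action through to $\Z[\Inv]$ using Theorem~\ref{thm:orbits}B. The central Hecke operators $t_{g'''}$ and $t_{h''}$ (and their inverses) act by homothety on $L_V$, $L_W$, and simply shift the $s$ and $r$ invariants. The non-central operators $t_{g'}, t_{g''}, t_{h'}$ act, by \eqref{eq:hecke-adjV} and \eqref{eq:hecke-adjW}, as sums over hyperspecial neighbors of prescribed relative position in $\cB(\PGL(V))$ and $\cB(\GL(W))$; these sums are parametrized by appropriate flag varieties over the residue field $\F_q$. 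Finally $h_{\Fr}$, being in $H \smallsetminus H_0$, shifts $k$ by $\pm 1$ (and correspondingly adjusts $r, s$ to account for its central component).

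Next I would reduce the support statement to a a priori bound on the invariants that can appear. Every monomial in $H_w(z)$ of degree $z^{6-j}$ is (by inspection of Proposition~\ref{thm:heckeformula}) a product of exactly $j$ non-central Hecke operators together with central twists; since $h_{\Fr}^j$ contributes another bounded shift, each lattice appearing in the expansion of $H_w(h_{\Fr}) \cdot (L_V, L_W)$ is at controlled distance from the base vertex in the corresponding building. Translating to invariants via the classification of $H_0$-orbits, this forces $d, m, n \leq 1$, and then the formula $\bc(k,s,r,d,m,n) = \max\{0, \min(m-n, n+d-m)\}$ yields $\bc \in \{0,1\}$, establishing the support assertion.

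The final and most delicate step is the divisibility by $q-1$. For this I would group terms of the expansion by invariant $(k, s, r, d, m, n)$ and show the coefficient in each class lies in $(q-1)\Z$. The mechanism is that the neighbor-sums from \eqref{eq:hecke-adjV}--\eqref{eq:hecke-adjW} partition according to the $H$-orbit of the resulting pair, and the fibers of $\Hyp \to \Inv$ over a given invariant class have sizes that are polynomials in $q$ evaluated at projective cardinalities $(q^k-1)/(q-1)$; these are congruent to $1$ modulo $q-1$, so differences between fiber sizes are divisible by $q-1$. Combining this with the particular $q$-prefactors and signs in Proposition~\ref{thm:heckeformula}, the net coefficient of each invariant in $H_w(h_{\Fr}) \cdot (0,0,0,0,0,0)$ is an integer combination of such differences. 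The main obstacle is bookkeeping: the polynomial has seven nontrivial terms, each a product of up to six twists and Hecke operators, and the $(q-1)$-divisibility is not apparent term-by-term. My strategy would be to tabulate, for each invariant class with $\bc \in \{0,1\}$, the contribution from each monomial in terms of a small number of flag-counting quantities, and then to exhibit the cancellation modulo $q-1$ by using the Weyl-invariance and grading structure already visible in the Satake-side computation of Section~\ref{subsec:heckepoltorus}.
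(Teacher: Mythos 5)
Your first step — expanding $H_w(h_{\Fr})$ monomial by monomial and computing the action of $t_{g'}, t_{g''}, t_{g'''}, t_{h'}, t_{h''}, h_{\Fr}$ on $\Z[\Inv]$ via explicit coset representatives and the classification of $H_0$-orbits — is precisely what the paper does (Lemmas~\ref{lem:gprime}, \ref{lem:gdoubleprime} and \eqref{eq:gtprime}--\eqref{eq:hdprime}), so that part is fine.

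The support argument, however, has a genuine gap. You claim that bounding the gallery distance from the base vertex by the degree of $H_w(z)$ ``forces $d, m, n \leq 1$.'' This is false: the explicit expansion in Section~\ref{subsec:explicit} contains, for instance, the invariants $(-3, 0, 0, 3, 0, 0)$ (so $d = 3$), $(0, 0, 0, 0, 2, 0)$ (so $m = 2$), $(0, 0, 0, 2, 0, 2)$ (so $d = n = 2$), and $(0, 0, -1, 2, 1, 0)$ (so $d=2$). A bound on the distance of the pair $(x,y)$ from $(x_0, y_0)$ does not translate into a pointwise bound on each of the six invariants, since the invariant $d$ records the relative position $\mu(\rho(x), y)$ and $m, n$ record residues in $k_0/\cO$ which interact with $d$ in a nontrivial way. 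The implication ``$d, m, n \leq 1 \Rightarrow \bc \leq 1$'' is correct, but the hypothesis does not hold, so the support assertion is not established this way. The paper verifies the support claim by inspecting the explicit expansion term by term.

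For the $(q-1)$-divisibility, your proposed mechanism (grouping by invariant and invoking congruences $\equiv 1 \pmod{q-1}$ of projective cardinalities $\frac{q^k - 1}{q - 1}$) is a reasonable heuristic but is not carried out, and it is unclear how to close the bookkeeping loop without essentially reproducing the full explicit expansion. The paper takes the direct route: it computes $H_w(h_{\Fr}) \cdot (0,\dots,0)$ in closed form and observes that every coefficient in the resulting element of $\Z[\Inv]$ visibly carries a factor of $(q-1)$ or $(q-1)^2$ or $(q-1)^3$. Your approach might yield a cleaner conceptual proof if made precise — for instance by showing that the $H_0$-orbits in the support other than the trivial one are permuted freely by a $\mu_{q-1}$-type action commuting with the Hecke operators — but as written it does not amount to a proof, and the divisibility is not apparent term-by-term in the expression of Proposition~\ref{thm:heckeformula} before the cancellation is actually performed.

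Finally, a small remark independent of your argument: the statement of the theorem says the support is where $\bc$ equals ``$1$ or $\varpi$''; this is clearly a typo for ``$0$ or $1$'' (consistent with the rest of the paper and with the values computed in Section~\ref{subsec:explicit}).
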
   

\noindent We will prove the theorem by explicitly computing the action of $\mathcal H$ on $\Z[\Inv]$. 

\subsection{The action of $\calH$ and $h_{\Fr}$ on $\Z[\Inv]$}\label{subsec:action-inv}

Consider the action of the local Hecke algebra $\calH$ on $\Z[G / K]$. The $G$-action on $\cB(G)$ identifies $\Hyp$ with $G / K$ since $K$ is the stabilizer of the pair $(x_0, y_0)$ where 
$x_0$ corresponds to the hyperspecial maximal compact $K_{V} \subset G_V$ and $y_0$ to the hyperspecial maximal compact $K_{W} \subset G_W$.  
The decomposition $G =  B K$ shows that any $x \in \Hyp \isom G / K$ can be represented by an element 
of the Borel subgroup $B \subset G$. Let $x$ corresponds to a coset $b K$ for $b \in B$.  The Hecke operator 
$\mathbf{1}_{K t K}$ then acts on a hyperspecial vertex corresponding to $b K$ via 
$[bK] \mapsto \sum_{\alpha} [ bg_\alpha K ]$, 
where $\ds K t K = \bigsqcup_{\alpha} g_\alpha K$. It is easy to check that this action is well-defined. 

\begin{lem}
The action of the local Hecke algebra $\mathcal H$ on the set $\Z[\Hyp]$ descends to an action of $\mathcal H$ on $\Z[\Inv]$. 
\end{lem}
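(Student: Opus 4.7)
The plan is to show that the natural surjection $\pi\colon \Z[\Hyp] \twoheadrightarrow \Z[\Inv]$ sending a hyperspecial vertex to its $H_0$-orbit class is equivariant for the Hecke action, which amounts to verifying that the $H_0$-action and the $\calH$-action on $\Z[\Hyp]$ commute. Once this is established, one defines the $\calH$-action on $\Z[\Inv]$ by lifting any invariant class to a representative in $\Hyp$, applying the Hecke operator, and projecting; the commutation guarantees that this is independent of the choice of representative.

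First I would make the two actions explicit. Identifying $\Hyp \cong G/K$ via the stabilizer of the base pair $(x_0, y_0)$, the $H_0$-action on $\Z[\Hyp]$ is by left multiplication: $h \cdot [bK] = [hbK]$ for $h \in H_0$. The Hecke action, as described just before the lemma, is by right multiplication: for $T = \mathbf{1}_{KtK}$ and a decomposition $KtK = \bigsqcup_\alpha g_\alpha K$, one has
$$
T \cdot [bK] = \sum_\alpha [b g_\alpha K].
$$
A standard check shows this is independent of the choice of $b$ in its coset and of the coset representatives $g_\alpha$, hence extends to an action of the convolution algebra $\calH$ on $\Z[\Hyp]$.

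Next, the key observation is that left and right multiplication on $G$ commute, so for any $h \in H_0 \subset G$, any $b \in G$, and any Hecke operator $T = \mathbf{1}_{KtK}$,
$$
T \cdot (h \cdot [bK]) = T \cdot [hbK] = \sum_\alpha [hb g_\alpha K] = h \cdot \sum_\alpha [b g_\alpha K] = h \cdot (T \cdot [bK]).
$$
Consequently, if $x, x' \in \Hyp$ satisfy $x' = h \cdot x$ for some $h \in H_0$, then $T\cdot x'$ is obtained from $T\cdot x$ by applying $h$ to each term in the support, so $\pi(T \cdot x') = \pi(T \cdot x)$ since $\pi$ is constant on $H_0$-orbits.

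Finally, I would conclude by defining $T \cdot \pi(x) := \pi(T \cdot x)$ for $T \in \calH$ and $x \in \Hyp$, and extending $\Z$-linearly. The commutation computation above shows this is well-defined, and linearity in $T$ plus associativity of the original action on $\Z[\Hyp]$ transfer immediately to $\Z[\Inv]$. There is no real obstacle here; the only point that requires any care is the well-definedness of the Hecke action at the level of $\Z[\Hyp]$ itself, which is standard, and the whole content of the lemma is the trivial commutativity of left and right multiplication. The reason the statement is worth recording is that it justifies reducing all subsequent distribution-relation computations from the large set $\Hyp$ to the finite-type set $\Inv$ of invariants parameterized by Theorem~\ref{thm:orbits}B, which is essential for the proof of Theorem~\ref{thm:distrel-inv}.
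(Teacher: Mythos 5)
Your proof is correct and matches the paper's own (very terse) argument for the Hecke-algebra part: the descent is exactly the commutation of the left $H_0$-action with the right-coset Hecke action. One small remark: although the lemma as stated mentions only $\calH$, the paper's proof of this lemma also establishes the descent of the $h_{\Fr}$-action, for which your commutation argument does not apply ($h_{\Fr} \notin H_0$); there one instead uses that $H_0$ is normal in $H$, writing $h_{\Fr} h_0 = h_0' h_{\Fr}$ with $h_0' = h_{\Fr} h_0 h_{\Fr}^{-1} \in H_0$, so the $\inv$-class is still preserved.
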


\begin{proof}
That the action of $\mathbf{1}_{K g K}$ on $\Z[\Hyp]$ descends to $\Z[\Inv]$ follows simply from 
the above description. For $h_{\Fr}$, we use that for any $h_0 \in H_0$ and any $(x, y) \in \Hyp$, 
$$
\inv(h_{\Fr} (h_0 x, h_0 y)) = \inv (h_0' h_{\Fr} x, h_0' h_{\Fr} y) = \inv (h_{\Fr} x, h_{\Fr} y), 
$$
where $h_0' \in H_0$ satisfies $h_0' = h_{\Fr^{-1}} h_0 h_{\Fr}$. 
\end{proof}

\subsection{Computing local invariants}\label{algorithm}
We summarize the construction of Section~\ref{subsec:Horbits} in an algorithm that gives the invariants $(k, s, r, d, m, n)$ for a pair $(\Lambda_V, \Lambda_W)$ of lattices in the $k_0$-vector spaces $V$ and $W$, respectively.

\begin{algorithm}\label{alg:invs}
\caption{Computing invariants of a pair of lattices $\Lambda_V$, $\Lambda_W$}
\begin{algorithmic}[1]
\REQUIRE $M \in \GL_3(k_0)$ and $N \in \GL_2(k_0)$ whose columnns are $\cO$-bases for $\Lambda_V$ and 
$\Lambda_W$, respectively.
\ENSURE A $6$-tuple $(k, s, r, d, m, n)$ for $\inv(\Lambda_V, \Lambda_W)$ as defined in Theorem \ref{thm:orbits}B.
\STATE Transform $M$ into an upper-triangular matrix by $\cO$-linear column operations.
\STATE Set $s := v_\varpi(M_{3, 3})$ and $M_0 := (M_{3, 3})^{-1} M$.
\STATE Write $\widetilde{M}$ for the upper left $2$-by-$2$-submatrix of $M_0$. 
\STATE Set $\widetilde{N} := \widetilde{M}^{-1} N$.
\STATE Perform a Cartan decomposition $\widetilde{N} = k_1 t k_2$, where $k_1, k_2 \in \GL_2(\cO)$ and $t$ is diagonal, with $v(t_{1, 1}) \geq v(t_{2, 2})$. 
\STATE Set $h := k_1 \widetilde{M}^{-1}.$
\STATE Set $U := h M_0$. As a consistency check, $U$ should be equivalent via $\cO$-linear column operations to a unitary matrix whose $(1, 2)$ entry is 0.
\RETURN $k = -v(\det h)$, $s$, $r = v(t_{2, 2})$, $d = v(t_{1, 1}) - r$, $m = -v(U_{1, 3})$, and $n = -v(U_{2, 3})$. (Recall that $m$ and $n$ are well-defined up to the equivalence of Proposition \ref{prop:RdOrbits}; set $m$, resp. $n$, to $0$, if $U_{1, 3} = 0$, resp. $U_{2, 3}= 0.$)
\end{algorithmic}
\end{algorithm}

\subsection{Computing the action of $\calH$ on $\Z[\Inv]$}
To compute the action of each $C_i$ on $\Z[\Inv]$, we use the canonical representative $b_\nu \in B$ for a given 6-tuple  
$\nu = (k, s, r, d, m, n)$ from Theorem~\ref{thm:orbits}. Recall that 
$$
b_\nu = \left ( \begin{pmatrix}\varpi^{s+k} & 0 & \varpi^{s+k-m} \\ 0 & {\varpi^s} & \varpi^{s-n} \\ 0 & 0 & {\varpi^s} \end{pmatrix}, \begin{pmatrix} \varpi^{k+d+r} & 0 \\ 0 & \varpi^r  \end{pmatrix} \right ) \in B_V \times B_W.
$$
We the consider the right coset decomposition of the double cosets 
$K_V g' K_V$ and $K_V g'' K_V$ as well as of $K_W h' K_W$ where $g', g''$ and $h'$ are the elements used in Section~\ref{subsubsec:invertsatake}. More precisely, 
\begin{equation}\label{eq:gprime}
K g' K = \bigsqcup_{a, b \in \F_q} b'_{a, b} K
\sqcup \bigsqcup_{c \in \F_q} b'_c K \sqcup b' K.
\end{equation}
Here, $\ds b'_{a, b} = \left ({\mthree \varpi {\widetilde{a}} {\widetilde{b}} {} 1 {} {} {} 1}, 1 \right)$ for any lifts  
$\widetilde{a}, \widetilde{b} \in \cO$ of $a$ and $b$, respectively. Moreover $\ds b'_c = \left ({\mthree 1 {} {} {} \varpi {\widetilde{c}} {} {} 1}, 1\right )$ for any lift $\widetilde{c} \in \cO$ of $c$ and $b' = (\diag(1, 1, \varpi), 1)$. 

Next, 
\begin{equation}\label{eq:gdoubleprime}
K g'' K = \bigsqcup_{a, b \in \F_q} b''_{a, b} K
\sqcup \bigsqcup_{c \in \F_q} b''_c K \sqcup b'' K, 
\end{equation}

where $\ds b''_{a, b} = \left ({\mthree \varpi {} {\widetilde{a}} {} \varpi {\widetilde{b}} {} {} 1}, 1 \right )$ for any lifts  
$\widetilde{a}, \widetilde{b} \in \cO$ of $a$ and $b$, respectively, $\ds b''_c = \left ( {\mthree \varpi {\widetilde{c}} {} {} 1 {} {} {} \varpi}, 1 \right )$ for any lift $\widetilde{c} \in \cO$ of $c \in \F_q$ and $b'' = (\diag(1, \varpi, \varpi), 1)$.
Finally, 
\begin{equation}\label{eq:doublecosethprime}
K_W h' K_W = \bigsqcup_{\widetilde{a} \in \F_q} {\mtwo {\varpi} {a} {} {1}} K_W \sqcup {\mtwo {1} {} {} {\varpi}} K_W, 
\end{equation}
where $\widetilde{a} \in \cO$ is any lift of $a \in \F_q$. 

Having these decompositions, we compute the action of the generating Hecke operators on invariants. The sequence of lemmas below recovers explicitly the action of $\mathcal H$ on $\Z[\Inv]$. The action of $h_{\Fr}$ is given by 
\begin{equation}\label{eq:frob}
h_{\Fr} \cdot (k, s, r, d, m, n) = (k-1, s, r, d, m, n). 
\end{equation}

\noindent We begin with describing the action of $t_{g'} = \mathbf{1}_{K g' K}$ via the following lemma together with the decomposition \eqref{eq:gprime}:  
\begin{lem}[action of $\mathbf{1}_{K g' K}$]\label{lem:gprime}
Let $\nu = (k, s, r, d, m, n)$. 

\noindent (i) If $a = 0$ then 
$$
\inv([b_\nu b'_{0, b}]) = (k+1, s, r', |d - 1|, m', n'), 
$$
where 
$$
r' = 
\begin{cases}
r & \text{if } d > 0,\\
r-1 & \text{if } d = 0, 
\end{cases}
$$
and 
$$
(m', n') = 
\begin{cases}
(0, n) & \text{if } d > 0, m = 0, b = -1,\\
(m+1, n) & \text{if } d > 0 \text{ and } (m = 0, b \ne -1 \text{ or } m\ne 0), \\
(n, 0) & \text{if } d = 0, m = 0, b = -1, \\ 
(n, m+1) & \text{otherwise.}
\end{cases}
$$
\noindent (ii) If $a \ne 0$ then 
$$
\inv([b_\nu b'_{a, b}]) = (k+1, s, r-1, d+1, m', n'), 
$$
where 
$$
(m', n') = 
\begin{cases}
(m, 0) & \text{if } m = n = 0 \text{ and } 1+ b - a = 0, \\ 
(m, 1) & \text{if } m = n > 0 \text{ and } a = 1, \\ 
(m, \max(m, n)) & \text{otherwise.}
\end{cases}
$$

\noindent (iii) One has  
$$
\inv([b_\nu b'_{c}]) = 
\begin{cases}
(k+1, s, r-1, d + 1, m, 0) & \text{if } n = 0, c = -1, \\ 
(k+1, s, r-1, d + 1, m, n+1) & \text{otherwise.}
\end{cases}
$$

\noindent (iv) One has 
$$
\inv([b_\nu b']) = (k-2, s+1, r+1, d, \max(m-1, 0), \max(n-1, 0)). 
$$
\end{lem}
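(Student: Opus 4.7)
The plan is to prove the lemma by direct computation, case by case. For each coset representative $g$ in \eqref{eq:gprime}, I would form the matrix product $b_\nu \cdot g$ and apply Algorithm \ref{alg:invs} to read off the new six-tuple. Since every representative in \eqref{eq:gprime} has trivial $W$-component, the $W$-matrix $N$ is unchanged throughout, and only the $V$-matrix $M_V = b_\nu^V \cdot g^V$ requires reduction. The computation in each subcase then consists of: (a) re-triangularizing $M_V$ by $\cO$-column operations (reading $s'$ from the valuation of the new $(3,3)$-entry), (b) extracting the upper-left $2\times 2$ block $\widetilde M$ of $M_0 = (M_{3,3})^{-1} M_V$ and performing a Cartan decomposition of $\widetilde N = \widetilde M^{-1} N$ to read off $r'$ and $d'$, and (c) computing $h = k_1 \widetilde M^{-1}$, recovering $k' = -v(\det h)$, and reading $(m', n')$ off the third column of $U = h M_0$ modulo the equivalence of Proposition \ref{prop:RdOrbits}.

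Case (iv) would serve as a warm-up: $b' = (\diag(1,1,\varpi), 1)$ scales the third column of $M_V$ by $\varpi$, giving $s' = s + 1$; after rescaling, $\widetilde M$ becomes $\diag(\varpi^{k-1}, \varpi^{-1})$, so the Cartan decomposition is already diagonal and yields $r' = r+1$, $d' = d$, with $k' = k - 2$ from $v(\det h) = 2 - k$, while the third column of $U$ acquires entries $\varpi^{1-m}$ and $\varpi^{1-n}$, matching $\max(m-1, 0)$ and $\max(n-1, 0)$ under the zeroing convention of Algorithm \ref{alg:invs}. Case (iii) is similar in spirit: $b'_c$ mixes the second and third columns, leading after re-triangularization to $r' = r - 1$, $d' = d + 1$, with the only subtlety being the cancellation between the existing $\varpi^{-n}$ and the perturbation $\widetilde c \varpi^{-n-1}$ in the $(2,3)$-entry of $U$, which forces $n' = 0$ exactly when $n = 0$ and $c \equiv -1 \pmod \varpi$.

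The real work lies in cases (i) and (ii). In case (i) (where $a = 0$), the branching arises because when $d = 0$ the diagonal of $\widetilde N$ has repeated valuation, and the Cartan decomposition is free to swap the two basis vectors of $W$; this swap is what produces the $(n, m+1)$ outcome in the ``$d = 0$'' rows, while in the ``$d > 0$'' rows the decomposition is forced to be diagonal and $(m,n)$ are preserved (up to the absorption of the $b$-perturbation into $m$ via the equivalence of Proposition \ref{prop:RdOrbits}). Case (ii), where $a \neq 0$, is the main obstacle: $b_\nu^V \cdot b'_{a,b}$ now has a nonzero $(2,1)$-entry, so re-triangularization demands a genuine column swap-and-clear, after which $\widetilde M$ is no longer diagonal. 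Its inverse in $h$ then produces cross-terms in $U$ whose valuations depend on the residues of $a$ and $b$; the condition $1 + b - a \equiv 0 \pmod \varpi$ in the $m = n = 0$ subcase arises from requiring a double cancellation in the $(1,3)$-entry, while the $a = 1$ condition in the $m = n > 0$ subcase reflects a partial cancellation produced by the upper-triangular factor of the Cartan decomposition of $\widetilde N$.

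The hardest part of the argument will therefore be the bookkeeping in case (ii): one must carefully track how the equivalence of Proposition \ref{prop:RdOrbits} (which collapses representatives when $m \leq n$ or $m \geq n + d$) interacts with the small perturbations induced by $a, b \in \F_q$, and then verify branch by branch that the outcome matches the piecewise formulas in the statement. Once this is done, summing over the cosets in \eqref{eq:gprime} yields the complete action of $\mathbf{1}_{K g' K}$ on $\Z[\Inv]$; the analogous statements for the generators $t_{g''}$ and $t_{h'}$ (handled via \eqref{eq:gdoubleprime} and \eqref{eq:doublecosethprime}) proceed by exactly the same strategy, and the Frobenius formula \eqref{eq:frob} is then immediate from \eqref{eq:frobh} and the explicit form of $b_\nu$.
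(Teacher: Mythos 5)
Your proposal is correct and takes essentially the same route as the paper: the paper's proof likewise applies Algorithm~\ref{alg:invs} to $b_\nu$ times each coset representative of \eqref{eq:gprime}, writes out only case (i) in full (splitting on $d>0$ versus $d=0$ exactly at the Cartan-decomposition step, as you describe), and declares (ii)--(iv) analogous. One minor slip in your sketch of case (ii): since $b'_{a,b}$ is upper triangular, the product $b_\nu b'_{a,b}$ is already upper triangular (the new nonzero entry is the $(1,2)$-entry, not the $(2,1)$-entry), so Step 1 is trivial and the genuine work happens in the Cartan decomposition of the non-diagonal $\widetilde{N}=\widetilde{M}^{-1}N$ and in tracking the resulting cross-terms in $U$ --- which is exactly the bookkeeping you identify, so the method and conclusions are unaffected.
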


\begin{proof}
The proof will be quite straightforward using Algorithm~\ref{alg:invs}. As such, we will only prove (i) and note that (ii)--(iv) will follow analogously. 
First, 
$$
b_\nu b_{0, b}' = \left ( \begin{pmatrix}\varpi^{s+k+1} & 0 & \varpi^{s+k} \widetilde{b}+ \varpi^{s+k-m} \\ 0 & {\varpi^s} & \varpi^{s-n} \\ 0 & 0 & {\varpi^s} \end{pmatrix}, \begin{pmatrix} \varpi^{k+d+r} & 0 \\ 0 & \varpi^r  \end{pmatrix} \right ).
$$
Since the matrix $\ds M = \begin{pmatrix}\varpi^{s+k+1} & 0 & \varpi^{s+k} \widetilde{b}+ \varpi^{s+k-m} \\ 0 & {\varpi^s} & \varpi^{s-n} \\ 0 & 0 & {\varpi^s} \end{pmatrix}$ is already upper-triangular, Step 2 of Algorithm~\ref{alg:invs} shows that the $s$-invariant does not change. We then compute the matrix $M_0 = \begin{pmatrix}\varpi^{k+1} & 0 & \varpi^{k} \widetilde{b}+ \varpi^{k-m} \\ 0 & 1 & \varpi^{-n} \\ 0 & 0 & 1 \end{pmatrix}$,  
$\widetilde{M} = \begin{pmatrix} \varpi^{k+1} & 0 \\ 0 & 1  \end{pmatrix}$ (Step 3) and (Step 4)
$$
\widetilde{N} = \widetilde{M}^{-1} N = 
\begin{pmatrix} 
\varpi^{-k-1} & 0 \\ 0 & 1
\end{pmatrix}
\begin{pmatrix} 
\varpi^{k+d+r} & 0 \\ 0 & \varpi^r  
\end{pmatrix} = 
\begin{pmatrix} \varpi^{d+r-1} & 0 \\ 0 & \varpi^r  \end{pmatrix}. 
$$
At this point, we need to perform the Cartan decomposition in Step 5. There are two cases 
to consider that will result in different matrices $k_1$: 

\vspace{0.1in}

\noindent {\bf Case 1: $d > 0$.} In this case $d+r-1 \geq r$ and hence, $k_1$ is the identity matrix and $t = \diag(\varpi^{d+r-1}, \varpi^r)$. Step 6 then yields 
$\ds h = 
\begin{pmatrix} 
\varpi^{-k-1} & 0 \\ 0 & 1
\end{pmatrix}
$
and Step 7 gives 
$$
U = 
\begin{pmatrix}
\varpi^{-k-1} & 0 & 0 \\
0 & 1 & 0 \\
0 & 0 & 1
\end{pmatrix}
\begin{pmatrix}
\varpi^{k+1} & 0 & \varpi^{k} \widetilde{b}+ \varpi^{k-m} \\ 
0 & 1 & \varpi^{-n} \\ 
0 & 0 & 1 
\end{pmatrix} = 
\begin{pmatrix}
1 & 0 & \varpi^{-1} \widetilde{b}+ \varpi^{-m-1} \\ 
0 & 1 & \varpi^{-n} \\ 
0 & 0 & 1 
\end{pmatrix}. 
$$
As a consistency check, $U$ is unipotent. We now read the invariants following Step 8: the 
$r$-invariant remains unchanged, whereas the $d$-invariant decreases by 1. We thus get (again following Step 8)
$$
\inv([b_\nu b_{0, b}]) = (k+1, s, r, d-1, m', n') = (k+1, s, r, |d-1|, m', n), 
$$
where  
$$
m' =  
\begin{cases}
0 & \text{if }m = 0, \ b = -1, \\
m+1 & \text{otherwise}.  
\end{cases}
$$

\vspace{0.1in}

\noindent {\bf Case 2: $d = 0$.} In this case we have $d + r - 1 < r$ and hence, our Cartan decomposition from Step 5 yields that 
$k_1 = k_2 = 
\begin{pmatrix} 
0 & 1 \\ 
1 & 0  
\end{pmatrix}$ and $t = \diag(\varpi^r, \varpi^{r-1})$. Step 6 then yields the matrix 
$$
h = 
\begin{pmatrix} 
0 & 1 \\ 
1 & 0  
\end{pmatrix} 
\begin{pmatrix} 
\varpi^{-k-1} & 0 \\ 
0 & 1
\end{pmatrix} = 
\begin{pmatrix}
0 & 1 \\
\varpi^{-k-1} & 0
\end{pmatrix}. 
$$ 
Step 7 then yields the matrix 
$$
U = 
\begin{pmatrix}
0 & 1 & 0 \\
\varpi^{-k-1} & 0 & 0 \\
0 & 0 & 1
\end{pmatrix}
\begin{pmatrix}
\varpi^{k+1} & 0 & \varpi^{k} \widetilde{b}+ \varpi^{k-m} \\ 
0 & 1 & \varpi^{-n} \\ 
0 & 0 & 1 
\end{pmatrix} = 
\begin{pmatrix}
0 & 1 & \varpi^{-n} \\ 
1 & 0 & \varpi^{-1} \widetilde{b}+ \varpi^{-m-1} \\ 
0 & 0 & 1 
\end{pmatrix}, 
$$
which is unipotent. Step 8 then yields all the invariants: 
$$
\inv([b_\nu b_{0, b}]) = (k+1, s, r-1, 1, m', n') = (k+1, s, r-1, |d-1|, n, n'), 
$$
where 
$$
n = 
\begin{cases}
0 & \text{if } m = 0, \ b = -1 \\ 
m+1 & \text{otherwise}. 
\end{cases} 
$$ 

\noindent Finally, (i) easily follows from putting together the two cases. 
\end{proof}

\noindent The following lemma whose proof is exactly the same as the proof as the proof of Lemma~\ref{lem:gprime}, together with equation \eqref{eq:gdoubleprime}, recovers the action of $t_{g''} = \mathbf{1}_{K g'' K}$ on $\Z[\Inv]$:  

\begin{lem}[action of $\mathbf{1}_{K g'' K}$]\label{lem:gdoubleprime}
Let $\nu = (k, s,r , d, m, n)$. 

\noindent (i) One has  
$$
\inv([b_\nu b''_{a, b}]) = 
(k+2, s, r-1, d, m', n'),
$$
where 
$$
m' = 
\begin{cases}
0 & \text{if } m = 0, a = -1, \\
m+1 & \text{otherwise.}
\end{cases}
$$
and 
$$
n' = 
\begin{cases}
0 & \text{if } n = 0, b = -1, \\
n+1 & \text{otherwise.}
\end{cases}
$$ 

\noindent (ii) If $c = 0$, one has  
$$
\inv([b_\nu b''_{0}]) = 
\begin{cases}
(k-1, s+1, r, d+1, \max(n-1, 0), m) & \text{if } d = 0, \\ 
(k-1, s+1, r+1, d-1, m, \max(n-1, 0)) & \text{if } d \ne 0. 
\end{cases}
$$

\noindent (iii) If $c \ne 0$, one has 
$$
\inv([b_\nu b''_{c}]) = 
\begin{cases}
(k-1, s+1, r, d+1, \max(n-1, 0), 0) & \text{if } d = 0, m = n, c=1, \\ 
(k-1, s+1, r, d+1, \max(n-1, 0), \max(m, n)) & \text{if } d = 0 \text{ and } (c \ne 1 \text{ or } m\ne n), \\  
(k-1, s+1, r-1, d+1, \min(m-1, 0), 0) & \text{if } d \ne 0, m=n, a=1, \\ 
(k-1, s+1, r-1, d+1, \min(m-1, 0), \max(m, n)) & \text{otherwise.}
\end{cases}
$$

\noindent (iv) One has 
$$
\inv([b_\nu b'']) = 
(k-1, s+1, r, d+1, \min(m-1, 0), n). 
$$
\end{lem}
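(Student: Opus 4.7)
The plan is to follow the exact same strategy as in the proof of Lemma~\ref{lem:gprime}: take the canonical representative
$$
b_\nu = \left ( \begin{pmatrix}\varpi^{s+k} & 0 & \varpi^{s+k-m} \\ 0 & \varpi^s & \varpi^{s-n} \\ 0 & 0 & \varpi^s \end{pmatrix}, \begin{pmatrix} \varpi^{k+d+r} & 0 \\ 0 & \varpi^r \end{pmatrix} \right),
$$
multiply on the right by each of the three families of coset representatives appearing in \eqref{eq:gdoubleprime}, and then run Algorithm~\ref{alg:invs} on the resulting pair of matrices. Since the second coordinate of every representative of $Kg''K$ listed in \eqref{eq:gdoubleprime} is the identity, only the first (size $3$) matrix is altered; in particular the matrix $N$ of Algorithm~\ref{alg:invs} is unchanged from $b_\nu$.

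For part (i), multiplication by $b''_{a,b}$ keeps the first column of $M$ equal to $\varpi^{s+k+1} e_1$ and the second column equal to $\varpi^{s+1} e_2$, while shifting the third column by $\cO$-multiples of $e_1, e_2$. After extracting $s' = s$ in Step 2 the matrix $\widetilde{M}$ becomes $\diag(\varpi^{k+1}, \varpi)$, so $\widetilde{N} = \diag(\varpi^{d+r-1}, \varpi^{r-1})$; the Cartan step has $k_1 = \id$ since $d \ge 0$, producing $r' = r-1$, $d' = d$, $k' = k+2$. The $(m',n')$ values come from inverting $h = \diag(\varpi^{-k-1}, \varpi^{-1})$ on the modified upper-right column, which introduces a shift $a$ in the $(1,3)$ entry and $b$ in the $(2,3)$ entry. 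The two exceptional cases in the statement correspond exactly to when $\varpi^{-m-1}(\widetilde{a}+\varpi^m)$ or $\varpi^{-n-1}(\widetilde{b}+\varpi^n)$ lies in $\cO$, i.e.\ when $m=0, a\equiv -1$ (resp.\ $n=0, b\equiv -1$).

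For parts (ii) and (iii), multiplication by $b''_c$ mixes the second and third columns (and for $c\ne 0$, the first and second), so the upper-triangularization step of Algorithm~\ref{alg:invs} is non-trivial and one must separate the cases $d = 0$ versus $d \ne 0$, precisely as in Case 1 vs.\ Case 2 of Lemma~\ref{lem:gprime}: when $d = 0$ the Cartan step flips the Weyl chamber (so a swap matrix is introduced in $k_1$), while when $d\ne 0$ it does not. Combined with the equivalence relation of Theorem~\ref{thm:orbits}A which normalizes $(m,n)$ modulo $\max$/$\min$, this yields the four-way case split for (iii). Part (iv) is the simplest: multiplication by $b'' = (\diag(1,\varpi,\varpi),1)$ only scales two of the columns, so Algorithm~\ref{alg:invs} returns $s' = s+1$, $k' = k-1$, $r' = r$, $d' = d+1$, with the $(m,n)$ invariants shifted by the standard $\max(\cdot - 1, 0)$ rule.

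The main obstacle, as in Lemma~\ref{lem:gprime}, is the bookkeeping of degenerate subcases: whenever the parameter $a$, $b$, or $c$ lifts to an element of $\cO$ with a specific residue, a cancellation can occur in the upper right $2\times 1$ block of $U$ that forces application of the equivalence relation $(m,n) \sim (0,n)$ or $(m,n)\sim (m,0)$ from Theorem~\ref{thm:orbits}A. These are precisely the exceptional branches listed in (i)--(iv). Once the case split is organized by the value of $d$ (trivial vs.\ non-trivial Cartan type) and by whether the $(1,3)$ or $(2,3)$ entry of $U$ lies in $\cO$, each sub-case is a one-line computation, so no conceptually new input is required beyond what was used for $\mathbf{1}_{Kg'K}$.
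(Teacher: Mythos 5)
Your overall strategy is exactly the paper's: the authors dispatch this lemma in one sentence ("the proof is exactly the same as the proof of Lemma~\ref{lem:gprime}, together with equation~\eqref{eq:gdoubleprime}"), and your plan of running Algorithm~\ref{alg:invs} on $b_\nu b''_{\bullet}$ for each of the coset representatives of \eqref{eq:gdoubleprime}, splitting into cases according to the shape of the Cartan decomposition in Step~5, is precisely that.

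Two computational details in your sketch are off, however, and are worth fixing before you believe your case-by-case analysis. First, for $c \neq 0$ the representative $b''_c = \left(\begin{smallmatrix}\varpi & \widetilde{c} & 0 \\ 0 & 1 & 0 \\ 0 & 0 & \varpi\end{smallmatrix}\right)$ mixes the \emph{first and second} columns, not the second and third; the product $b_\nu b''_c$ is still upper triangular, so Step~1 of Algorithm~\ref{alg:invs} remains trivial. What becomes nontrivial is Step~5: the upper-left block $\widetilde{M}$ is no longer diagonal, and consequently $\widetilde{N} = \widetilde{M}^{-1}N$ has a unit times $\varpi^r$ in the $(1,2)$ slot. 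Second, and as a consequence, your claim that ``when $d = 0$ the Cartan step flips the Weyl chamber while when $d \neq 0$ it does not'' is correct for part~(ii) (where $c = 0$ and $\widetilde{N}$ is diagonal, matching Case~1 vs.\ Case~2 of Lemma~\ref{lem:gprime}), but \emph{wrong} for part~(iii): since the $(1,2)$ entry of $\widetilde{N}$ has the strictly minimal valuation $r$ whenever $c \neq 0$, the element $k_1$ is a nontrivial ``swap-plus-shear'' for every $d \geq 0$. The genuine source of the four-way split in part~(iii) is not the Weyl-chamber geometry of the Cartan decomposition but the cancellations that can occur in the $(2,3)$ entry of $U$ depending on whether $m = n$ and whether $\widetilde{c}^{-1} - 1 \in \varpi\cO$ (i.e.\ $c = 1$). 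Finally, your intermediate formula in part~(i), $\varpi^{-m-1}(\widetilde{a}+\varpi^m)$, should read $\varpi^{-m-1}(\varpi^m\widetilde{a}+1)$ (equivalently $\varpi^{-1}\widetilde{a} + \varpi^{-m-1}$); the conclusion that the degenerate branch occurs exactly when $m = 0$ and $a \equiv -1 \bmod \varpi$ is unaffected, but as written your expression has the wrong valuation when $m > 0$ and $a = 0$.
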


\noindent Next, the action of the Hecke operator $t_{h'} = \mathbf{1}_{K h' K}$ is determined by the following lemma together with the double coset decomposition \eqref{eq:doublecosethprime}: 

\begin{lem}[action of $\mathbf{1}_{K h' K}$]
Let $\nu = (k, s, r, d, m, n)$. 

\noindent (i) If $b_W = \left ( 1, {\mtwo {\pi} {\widetilde{a}} {0} {1}}\right )$ then 
$$
\inv([b_\nu b_W]) = 
\begin{cases}
(k, s, d+1, r, m, n) & \text{if } a = 0 \\ 
(k, s, d+1, r, 0, n) & \text{if } m = n - d,\ a = 1, \\ 
(k, s, d+1, r, max(m, n-d), n) & \text{otherwise}.  
\end{cases}
$$

\noindent (ii) If $b_W = \left ( 1, {\mtwo {1} {0} {0} {\pi}}\right )$ then 
$$
\inv([b_\nu b_W]) = 
\begin{cases}
(k, s, r, d+1, n, m) & \text{if } d = 0, \\ 
(k, s, r+1, d-1, m, n) & \text{otherwise.}
\end{cases}
$$
\end{lem}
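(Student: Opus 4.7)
The plan is to imitate the proofs of Lemmas~\ref{lem:gprime} and~\ref{lem:gdoubleprime} by running Algorithm~\ref{alg:invs} on each coset representative $b_\nu b_W$. The crucial simplification here is that $b_W$ acts trivially on the $V$-component of $b_\nu$, so Steps 1--3 of the algorithm leave $s$, $M_0$, and $\widetilde{M} = \diag(\varpi^k, 1)$ unchanged; all modifications flow through the matrix $\widetilde{N}'$ produced in Step 4 and through its Cartan decomposition in Step 5. The computation then splits into routine subcases according to whether the valuations of the entries of $\widetilde{N}'$ force a nontrivial $k_1$ (respectively $k_2$) to re-order or clean up the matrix.

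For case (i), direct multiplication yields
$$
\widetilde{N}' = \widetilde{M}^{-1}\cdot N\cdot \smallmtwo{\varpi}{\widetilde{a}}{0}{1} = \smallmtwo{\varpi^{d+r+1}}{\widetilde{a}\varpi^{d+r}}{0}{\varpi^r}.
$$
I would then split the Cartan decomposition into three situations: (a) when $a = 0$ the matrix is already diagonal with its valuations in the correct order; (b) when $a \neq 0$ and $d \geq 1$ one takes $k_1 = \smallmtwo{1}{\widetilde{a}\varpi^{d-1}}{0}{1} \in \GL_2(\cO)$, which reduces $\widetilde{N}'$ to $\diag(\varpi^{d+r+1}, \varpi^r)$; and (c) when $a \neq 0$ and $d = 0$ the off-diagonal entry becomes a unit, forcing a column-swap together with a row-swap to achieve a diagonal with the same pair of valuations. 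The new pair $(r_{\mathrm{new}}, d_{\mathrm{new}})$ is then read off from Step 8. The more delicate part is tracking $U = hM_0$: after clearing the induced $(1,2)$-entry by an $\cO$-column operation, the $(1,3)$-entry takes the shape $\varpi^{-m} + \widetilde{a}\varpi^{d-n}$, whose valuation is $\min(-m, d-n)$ in general but which gains extra cancellation exactly when $m = n - d$ and $1+\widetilde{a} \equiv 0 \pmod{\varpi}$. A sign arising from the Cartan $k_1$ turns this condition into the distinguished case $a = 1$ recorded in the statement, while the generic case gives $m_{\mathrm{new}} = \max(m, n-d)$; the $n$-invariant is unchanged throughout.

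For case (ii), direct computation yields $\widetilde{N}' = \diag(\varpi^{d+r}, \varpi^{r+1})$, already diagonal. If $d \geq 1$ the valuations are in the order required by Step~5, the Cartan decomposition is trivial, the matrix $U$ (and hence $m$, $n$) is unchanged, and one reads $(r_{\mathrm{new}}, d_{\mathrm{new}}) = (r+1, d-1)$. If $d = 0$ the valuations are reversed, forcing $k_1 = k_2 = \smallmtwo{0}{1}{1}{0}$; the presence of this swap inside $h = k_1\widetilde{M}^{-1}$ propagates to $U = hM_0$, where after restoring upper-triangularity via a column transposition it simply exchanges rows~1 and~2 of $M_0$, thereby swapping $m$ and $n$ and producing the tuple $(r, d+1, n, m)$.

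The main technical obstacle throughout is ensuring that the column operations used to return $U$ to the required ``unipotent with $(1,2)$-entry zero'' shape stay integral (which is automatic in case (i)(a) and case (ii) but requires the inequality $d \geq 1$ or the unit property of $\widetilde{a}$ in cases (i)(b) and (i)(c)), together with the cancellation analysis for $1 + \widetilde{a}$ that isolates the single exceptional value of $a$. Once these bookkeeping points are settled, the entire proof is a routine case analysis directly parallel to that of Lemma~\ref{lem:gprime}.
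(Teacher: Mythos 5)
Your overall plan — run Algorithm~\ref{alg:invs} on each right coset representative $b_\nu b_W$, observing that only the $\GL_2$-part is touched so the $V$-side data $s$, $M_0$, $\widetilde{M}$ are fixed — is exactly the method the paper intends (it is the same approach used for Lemmas~\ref{lem:gprime} and~\ref{lem:gdoubleprime}). Case~(ii) is handled correctly. But case~(i)(b)--(c) contains concrete computational errors.

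In case~(i) with $a\ne 0$, the correct Cartan factorization of $\widetilde{N}' = \smallmtwo{\varpi^{d+r+1}}{\widetilde{a}\varpi^{d+r}}{0}{\varpi^r}$ uses $k_1 = \smallmtwo{1}{\widetilde{a}\varpi^{d}}{0}{1}$, not $\smallmtwo{1}{\widetilde{a}\varpi^{d-1}}{0}{1}$: indeed $k_1^{-1}\widetilde{N}' = \smallmtwo{1}{-\widetilde{a}\varpi^{d}}{0}{1}\smallmtwo{\varpi^{d+r+1}}{\widetilde{a}\varpi^{d+r}}{0}{\varpi^r} = \diag(\varpi^{d+r+1},\varpi^r)$, while your choice leaves a nonzero $(1,2)$-entry $\widetilde{a}\varpi^{d+r-1}(\varpi-1)$. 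With the correct $k_1$ the entry $\widetilde{a}\varpi^d$ is integral for \emph{all} $d \geq 0$, so the split into the subcases $d \geq 1$ and $d = 0$ is spurious, and the ``column-swap together with a row-swap'' you invoke at $d=0$ does not occur (and in fact would not even factor integrally). The uniform $k_1$ gives $h = k_1\widetilde{M}^{-1}$, hence $U_{1,3} = \varpi^{-m} + \widetilde{a}\varpi^{d-n}$ and $U_{2,3} = \varpi^{-n}$ in every case.

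Finally, the appeal to ``a sign arising from the Cartan $k_1$'' to convert the cancellation condition $1+\widetilde a\equiv 0 \bmod \varpi$ (i.e.\ $a = -1$) into the $a = 1$ printed in the lemma is not supported by the computation: there is no such sign, and the honest output is $a = -1$, exactly parallel to the $b = -1$ conditions in Lemma~\ref{lem:gprime}. (Relatedly, the printed triple $(k,s,d+1,r,\ldots)$ in case~(i) puts $r$ in the slot reserved for the non-negative invariant $d$; the computation yields $r_{\mathrm{new}} = r$ and $d_{\mathrm{new}} = d+1$ in all three subcases, so the statement's $d+1$ and $r$ appear transposed.) Rather than explaining away these discrepancies, you should flag them as apparent misprints in the lemma as stated.
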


\noindent Finally, we record the action of the operators $t_{g'''} = \mathbf{1}_{K g''' K}$ and $t_{h''} = \mathbf{1}_{K h'' K}$: 
\begin{equation}\label{eq:gtprime}
\mathbf{1}_{Kg''' K} (k, s, r, d, m, n) = (k, s+1, r, d, m, n). 
\end{equation}
and 
\begin{equation}\label{eq:hdprime}
\mathbf{1}_{K h'' K} (k, s, r, d, m, n) = (k, s, r+1, d, m, n). 
\end{equation}

\subsection{Proof of Theorem~\ref{thm:distrel-inv}}\label{subsec:explicit}
Using the explicit computation of the Hecke action as well as the explicit formula for the Hecke polynomial, we compute  
\begin{eqnarray*}
H(h_{\Fr})\cdot (0, 0, 0, 0, 0, 0) &=& (q - 1)^2 q^2 (q^2 + q - 1) (-3, 0, 0,1,0,1) + \\ 
&+& (q - 1)^2 q (q^6 + 2q^5 + 2q^4 + 2q^3 - q^2 + 4) (0, 0, 0, 0, 0, 0) + \\ 
&+& (q-1)^2 q^3 (q+1)^2 (-3, 0, 0, 3, 0, 0) - \\ 
&-&  (q - 1)^3 q^2 (q + 1) (q^3 + q^2 - 1) (0, 0, 0, 2, 0, 1) - \\ 
&-& (q - 1)^2 (q^5 + 2q^4 - 2q^2 + q + 1) (-3, 0, 0, 1, 0, 0) - \\
&-&  (q - 1) q (q^3 - q + 1) (q^3 + q^2 + q - 2)(0, 0, 0, 0, 1, 0) - \\
&-& (q - 1)^2 q^3 (q + 1)^3(0, 0, 0, 2, 0, 0) - \\ 
&-& (q - 1)^3 (q + 1)^2  (-3, 0, 0, 1, 1, 0) + \\ 
&-& (q-1)q^4 (0, 0, 0, 0, 2, 0) - \\ 
&+& (q-1)q^3 (0, 0, 0, 0, 0, 2)  - \\
&-&  (q-1)^2 q(q+1)(-6, 0, 0, 0, 0, 0) - \\
&-&  (q - 1)^2 q^3 (q + 1) (-3, 0, 0, 3, 0, 1)  + \\
&+&   (q - 1)^3 q^3 (q + 1)^2 (0, 0, 0, 2, 0, 2) + \\ 
&+&   (q - 1) q (q^3 - 2q + 2) (0, 0, 0, 0, 0, 1) + \\ 
&+&  (q - 1)^2 q^2 (q + 1) (q^2 + q + 1) (0, 0, -1, 2, 1, 0) - \\
&-&  (q-1)^2 q (q+1)(-6, 0, 0, 2, 0, 0). 
\end{eqnarray*}
It remains to check that for each $\nu \in \Inv$ that is in the support of the above cycle, $\bc(\nu)$ is either $0$ or $1$, which is immediate from the definition of $\bc(\nu)$. 

\section{Distribution Relations on Special Cycles}\label{sec:split-distrel-cycles}
We return to the global notation from the first two sections. Theorem~\ref{thm:distrel-inv} will now be used to deduce Theorem~\ref{theoremC} (the horizontal distribution relations for $\cZ_K(\G, \Hbf)$). Recall that $\tau$ is an allowable prime of $F$ that is split in $E$ and that $\tau = w \overline{w}$ for places $w$ and $\overline{w}$ of $E$, with $w$ the place of $E$ corresponding to the fixed embedding $\iota_\tau \colon \overline{E} \hra \overline{F}_\tau$ and let 
$$
\Art_{E_w} \colon E_w^\times \ra \Gal(E_w^{\ab} / E_w)
$$ 
be the corresponding local Artin map. The fixed embedding $\iota_\tau$ then identifies the local Galois group $\Gal(E_w^{\ab} / E_w)$ with the decomposition group  
$D_w \subset \Gal(E^{\ab} / E)$ at the unique place of $E^{\ab}$ determined by the fixed embedding $\iota_\tau$. 

\subsection{Action of the decomposition group at $w$ on $\cZ_K(\G, \Hbf)$}
Let $\xi \in \cZ_K(\G, \Hbf)$ be a cycle of local conductor $\bc_\tau(\xi) = n$. 
Since 
$$
\ds \calH(\G, K) \isom \calH(G_\tau, K_\tau) \otimes \calH(G^{(\tau)}, K^{(\tau)})
$$
and  
$$
\G(\Af)/K \isom G_\tau / K_\tau \times G^{(\tau)} / K^{(\tau)} \isom \Hyp_\tau \times G^{(\tau)} / K^{(\tau)},
$$ 
to prove Theorem~\ref{theoremC}, it suffices to compare the action of the local Hecke algebra $\calH(G_\tau, K_\tau)$ to the action of the local Galois group $\Gal(E^{\ab}_w / E_w)$. 

Given integers $0 \leq m < n$, define the local trace at $\xi$ as.
$$
\Tr_{n, m}(\xi) := \sum_{x \in \cO_{m}^\times / \cO_{n}^\times} \xi^{\Art_{E_w}(x)}.   
$$
By local class field theory, the above trace computes $\Tr_{E(\tau^n)_w / E(\tau^m)_w} \xi$.  

\subsection{The action of $\Fr_w$}
Let $\xi \in \Z[\cZ_K(\G, \H)]$ be a cycle whose local conductor at $\tau$ is $\bc_\tau(\xi) = 0$. Recall from \textsection~\ref{subsec:frob} that $\Fr_w$ acts on $\xi$ via the 
element 
\begin{equation}\label{eq:frobh}
h_{\Fr} = (\diag(\varpi^{-1}, 1, 1), \diag(\varpi^{-1}, 1)) \in H_\tau \subset G_\tau, 
\end{equation}

Applying $h_{\Fr}$ to a cycle $\xi \in \cZ_K(\G, \H)$ with $\inv(\xi) = (k, s, r, d, m, n)$ of local conductor $\bc_\tau(\xi) = 0$, one has
\begin{equation}
\inv(\Fr_w \xi) = (k-1, s, r, d, m, n).
\end{equation}
(note that this action would not be well-defined if one had $\bc_\tau(\xi) > 0$).
\subsection{Galois orbits and invariants}

Fix a cycle $\xi_0 = \cZ_K(g_0)$ for some fixed $g_0 \in \G(\Af)$ defined over $L = E(\xi_0)$; if necessary, enlarge $L$ to contain $E(1)$. Suppose that $(g_0)_\tau = 1$. Then there is a map
$$
\Phi_{g_0} \colon G_\tau / K_\tau \to \cZ_K(\G, \H), \qquad g_\tau K_\tau \mapsto \cZ_K(g_\tau, g_0^{(\tau)}),  
$$ 
where $(g_\tau, g_0^{(\tau)}) \in \G(\Af)$ is the adelic element that agrees with $g_0$ outside the finite place $\tau$ and that is equal to $g_\tau$ at $\tau$. Composing this map with the map $\cZ_K(\G, \H) \twoheadrightarrow \Gal(E^{\ab} / E) \backslash \cZ_K(\G, \H)$ that takes a cycle to its Galois orbit, we obtain an induced map:  

$$
\Phi_{g_0} \colon H_\tau \backslash G_\tau / K_\tau \to D_w \backslash \Phi_{g_0}(G_\tau / K_\tau),  
$$
where $D_w \subset \Gal(E^{\ab} / E)$ is the decomposition group.  

Similarly, there is a map 
$$
\Phi_{g_0} \colon H_0 \backslash G_\tau / K_\tau \ra D_w^0 \backslash \Phi_{g_0}(G_\tau / K_\tau), 
$$
where $D_w^0$ is the decomposition group at $\iota_\tau$ of $\Gal(E^{\ab} / E(1))$. 

\begin{lem}\label{inv:cycle}
Assume that $L_w = E(1)_w$ (i.e., the place of $E(1)$ determined by $\iota_\tau$ splits completely in $L$). Let $\xi = \Phi_{g_0}(x)$ and $\xi' = \Phi_{g_0}(x')$ for $x, x' \in G_\tau / K_\tau$. If $D(L)_w \subset \Gal(E^{\ab} / L)$ denotes the decomposition group at $\iota_\tau$ then 
$$
\xi' \in D(L)_w \cdot \xi \qquad \text{ if and only if } \qquad \inv_\tau (x) = \inv_\tau (x').
$$ 
\end{lem}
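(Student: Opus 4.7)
The plan is to identify the $D(L)_w$-orbit of $\xi$ (via $\Phi_{g_0}$) with the $H_0$-orbit of $x$ in $G_\tau/K_\tau$, and then invoke Theorem~\ref{thm:orbits}B to conclude.

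For the implication $\inv_\tau(x) = \inv_\tau(x') \Rightarrow \xi' \in D(L)_w \cdot \xi$, Theorem~\ref{thm:orbits}B yields $h \in H_0$ with $hx = x'$. Let $\tilde h \in \Hbf(\Af)$ be the adelic element with $\tilde h_\tau = h$ and $\tilde h_v = 1$ for $v \ne \tau$, and put $\sigma := \Art_E^1(\det^*(\tilde h))$. By Shimura reciprocity \eqref{eq:gal}, $\xi^\sigma = \Phi_{g_0}(hx) = \xi'$. To verify $\sigma \in D(L)_w$, I observe that $v(\det h) = 0$ places $\det(h)$ in $\cO_0^1 \subset \T^1(F_\tau)$; under $\Art^1_{E_w}$ this image lies in the inertia subgroup $I_w \subset D_w$. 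Since $E(1)/E$ is unramified at every finite place (being a subextension of the Hilbert class field $E[1]$), the completion $L_w = E(1)_w$ is contained in the maximal unramified extension $E_w^{\ur}$ and is therefore pointwise fixed by $I_w$.

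For the reverse implication, suppose $\xi^\sigma = \xi'$ for some $\sigma \in D(L)_w$. Shimura reciprocity produces $h_\sigma \in \Nbf_\G(\Hbf)(\Q)\Hbf(\Af)$ satisfying $\Art_E^1(\det^*(h_\sigma)) = \sigma|_{E[\infty]}$ and $\xi^\sigma = \cZ_K(h_\sigma g)$, with $g = (g_\tau, g_0^{(\tau)})$. A strong approximation argument on $\Hbf$ (as in the proof of \cite[Lem.~2.5]{jetchev:unitary}) lets me modify $h_\sigma$ by an element of $\Nbf_\G(\Hbf)(\Q)$ so that its components away from $\tau$ lie in $K^{(\tau)}$. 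Writing $\tilde h_\sigma \in H_\tau$ for the resulting $\tau$-component, the equality $\xi^\sigma = \xi'$ descends to $\tilde h_\sigma \cdot x = x'$ in $G_\tau/K_\tau$. It remains to show $\tilde h_\sigma \in H_0$: the hypothesis $\sigma \in D(L)_w$, combined with $L_w = E(1)_w$, the local description of the global norm subgroup $E^\times \widehat F^\times \widehat{\cO_E}^{\,\times}$ defining $E(1)$, and Lemma~\ref{lem:norm} at $c=0$, forces $\det^*(\tilde h_\sigma) \in \cO_0^1$, i.e., $v(\det \tilde h_\sigma) = 0$. Theorem~\ref{thm:orbits}B then gives $\inv_\tau(x) = \inv_\tau(x')$.

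The main obstacle is the final step of the reverse direction: verifying that after the strong approximation modification, $\det^*(\tilde h_\sigma)$ is constrained to the unit group $\cO_0^1$, rather than to the larger subgroup $\varpi^{f_w\Z}\cO_{F_\tau}^\times$ (where $f_w = [E(1)_w : E_w]$) that a naive local class field theory translation would give. The resolution requires careful use of the normalizer elements $\Nbf_\G(\Hbf)(\Q)$ in the modification step, together with the specific form of the norm subgroup defining $E(1)$, in a manner parallel to the argument at inert primes in \cite{jetchev:unitary}.
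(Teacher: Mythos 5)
Your forward direction is complete and gives a clean argument the paper only gestures at: you exhibit $\sigma=\Art_E^1(\det^*(\tilde h))$ explicitly, observe that $v(\det h)=0$ places it in inertia at $\tau$, and use unramifiedness of $E(1)/E$ together with $L_w=E(1)_w$ to conclude $\sigma\in D(L)_w$. That is sound.

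The reverse direction, however, has a genuine and explicitly acknowledged gap, and it is precisely the hard part. You correctly identify that the condition ``$\sigma$ fixes $E(1)$ and lies in the decomposition group at $\tau$'' only constrains a $\tau$-supported representative $\tilde h_\sigma$ to satisfy $v(\det\tilde h_\sigma)\in f_w\Z$, where $f_w=[E(1)_w:E_w]$; it does \emph{not} by itself force $v(\det\tilde h_\sigma)=0$. Indeed $\Fr_w^{f_w}\in D_w^0=D(L)_w$ acts via $h_{\Fr}^{f_w}$, which is \emph{not} in $H_0$ when $f_w>0$. The correct resolution is not, as you suggest, a more careful choice within the strong approximation step: no choice of $\tau$-supported representative for $\Fr_w^{f_w}$ will land in $H_0$. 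What actually saves the lemma is that $\xi_0$ is fixed by $\Fr_w^{f_w}$ (since $\Fr_w^{f_w}$ fixes $L$), so $h_{\Fr}^{f_w}(1,g_0^{(\tau)}) = n\,(1,g_0^{(\tau)})\,k$ for some $n\in\Nbf_\G(\Hbf)(\Q)$, $k\in K$; this relation lets one absorb $h_{\Fr}^{f_w}$ into the double coset and replace it by an element of $K_\tau$, after which the remaining discrepancy is by inertia and hence by $H_0$. Without an argument of this shape, the step ``forces $v(\det\tilde h_\sigma)=0$'' in your proof is simply false as stated.

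For comparison, the paper's own proof does not carry out either direction; it cites $[\textsection 2.3.16]$ of \cite{jetchev:unitary} for the statement that $D_w^0\subset\Gal(E^{\ab}/E(1))$ acts on the local orbit via $H_0$, which encapsulates exactly the content your reverse direction is missing. Your attempt is more self-contained for the easy direction, but for the hard direction it reduces to a claim you flag but do not establish.
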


\begin{proof}
By Theorem~\ref{thm:orbits}.B the elements $x$ and $x'$ are in the same $H_\tau$-orbit if and only if 
$\inv_\tau(x) = \inv_\tau(x')$. The claim is then that $\Phi_{g_0}(x)$ and $\Phi_{g_0}(x')$ are conjugate under $\Gal(E^{\ab}_w / L_w)$ if and only if the elements $x$ and $x'$ are in the same $H_\tau$-orbit. This follows from the description of the Galois action on the cycles $\cZ_K(\G, \Hbf)$ in \cite[\textsection 2.3.16]{jetchev:unitary}: the Galois group $\Gal(E^{\ab} / E)$ acts via $\Hbf(\Af)$ and the decomposition group $D_w^0 \subset \Gal(E^{\ab} / E(1))$ acts via $H_0$. 
\end{proof}

\subsection{Horizontal distribution relations on $\Z[\cZ_K(\G, \Hbf)]$}
Let $\xi_0 \in \cZ_K(\G, \H)$ be the special cycle from the statement of the Theorem~\ref{theoremC} for which 
$$
\inv_\tau(\xi_0) = (0, 0, 0, 0, 0, 0).
$$  
Then

\begin{eqnarray*}\label{eq:expand}
H_{w}(\Fr_w) \cdot \xi_{0} &=& \sum_{\xi \in \cZ_K(\G, \H)} C(\xi) \xi = \sum_{\nu \in \Inv_\tau} 
\sum_{\substack{\xi \in \cZ_K(\G, \H), \\ \inv_\tau(\xi) = \nu}} C(\xi) \xi \in \Z[\cZ_K(\G, \H)]. \\
\end{eqnarray*}

For $\nu \in \Inv_\tau$, write $c(\nu)$ for the local conductor of any cycle with invariant $\nu$. 
Moreover, fix a cycle $\xi_\nu \in \cZ_K(\G, \H)$ with $\inv_\tau(\xi_\nu) = \nu$. 
For each cycle $\xi$ with $\inv_\tau(\xi) = \nu$ in the above sum, there exists $x \in  \cO_0^\times / \cO_{c(\nu)}^\times$ such that $\xi = \xi_\nu^{\Art_{E_w}(x)}$. In addition, 
the quotient group $\cO_0^\times / \cO_{c(\nu)}^\times$ acts simply transitively on the local orbit $D_w^0 \xi_\nu$ under the decomposition group $D_w^0 \subset \Gal(E^{\ab} / E(1))$.  
Regrouping the terms of $x$ then yields integers $m_\nu(x) \in \Z$ such that 

\begin{equation}\label{eq:distrel}
H_w(\Fr_w) \cdot \xi_0 = \sum_{\nu \in \Inv_\tau} \sum_{x \in \cO_0^\times / \cO_{c(\nu)}^\times}  m_{\nu}(x)\xi_{\nu}^{\Art_{E_w}(x)}.
\end{equation}
By Theorem~\ref{thm:distrel-inv}, for all cycles $\xi \in \Supp (H_{w}(\Fr_w) \cdot \xi_{0})$, $\bc_w(\xi) = 0$ or $1$. From the explicit formula of Section~\ref{subsec:explicit}, the only value of 
$\nu \in \Supp(H_w(h_{\Frob}) \cdot (0, 0, 0, 0, 0, 0))$ for which $c(\nu) = 1$ is $\nu = (0, 0, -1, 2, 1, 0)$. We would like to show that the right-hand side of \eqref{eq:distrel} is of the form $\Tr_{1, 0} (\xi_1)$ for some element $\xi_1 \in \Z[\cZ_K(\G, \H)]$ with $\bc_w(\xi_1) = 1$. For each $\nu \ne (0, 0, -1, 2, 1, 0)$, the fact that 
$$
H_w(h_{\Fr}) \cdot (0, 0, 0, 0, 0, 0) \in (q-1) \Z[\Inv_\tau]
$$ 
implies that the summand in the right-hand side corresponding to $\nu$ is in the image of $\Tr_{1, 0}$. It remains to show that 
\begin{equation}\label{eq:sumconj}
\sum_{x \in \cO_0^\times / \cO^\times_{c(0, 0, -1, 2, 1, 0)}} m_\nu(x) \xi_\nu^{\Art_{E_w}(x)} 
\end{equation}
is in the image of the trace map $\Tr_{1, 0}$. The left-hand side of \eqref{eq:distrel} is clearly 
invariant under $D_w^0$ since the coefficients of the Hecke polynomial (being local Hecke operators) commute with the action of $D_w^0$ and since $\xi_0$ is invariant under $D_w^0$.  
Hence, the right-hand side of the same equation is invariant under $D_w^0$ too, i.e., \eqref{eq:sumconj} must be invariant under $D_w^0$. As the $\xi_\nu^{\Art_{E_w}(x)}$'s are all the distinct $D_w^0$-conjugates, we obtain that $m_{(0, 0, -1, 2, 1, 0)}(x)$ are all equal for $x \in \cO_0^\times / \cO_1^\times$ and \eqref{eq:sumconj} is in the image of $\Tr_{1, 0}$. This proves Theorem~\ref{theoremC}.

\section*{Acknowledgements}
This research project was funded by the Swiss National Science Foundation grant PP00P2\_144658. We thank Christophe Cornut and Christopher Skinner for various helpful discussions. 

\bibliographystyle{amsalpha}
\bibliography{biblio-math}

\end{document}